\pgfplotsset{compat=1.6}
\newtheorem{defin}{Definition}[section]%
\newtheorem{prop}{Proposition}[section]%
\newtheorem{lem}{Lemma}[section]%
\newtheorem{teo}{Theorem}[section]%
\newtheorem{alg}{Algorithm}[section]%
\theoremstyle{definition}
\newtheorem{rem}{Remark}[section]
\newtheorem{notaz}{Notation}[section]
\newcommand{\norm}[1]{\Vert{#1}\Vert}	
\newcommand{\N}{\ensuremath{\mathbb{N}}}    
\newcommand{\R}{\ensuremath{\mathbb{R}}}	
\DeclareMathOperator{\argmin}{arg\,min}
\renewcommand{\v}{\ensuremath{\boldsymbol}}	
\journal{arXiv}
\begin{document}

\begin{frontmatter}



\title{A New Linear Programming Approach and a New Backtracking Strategy for Multiple-Gradient Descent in Multi-Objective Optimization}


\author[inst1,inst3]{Francesco Della Santa\corref{cor1}}
\cortext[cor1]{Corresponding author}

\affiliation[inst1]{organization={Department of Mathematical Sciences, Politecnico di Torino},
            addressline={Corso Duca degli Abruzzi 24}, 
            postcode={10129}, 
            state={Turin},
            country={Italy}}

\affiliation[inst3]{organization={Gruppo Nazionale per il Calcolo Scientifico INdAM},
            addressline={Piazzale Aldo Moro 5}, 
            postcode={00185}, 
            state={Rome},
            country={Italy}}



\begin{abstract}
In this work, the author presents a novel method for finding descent directions shared by two or more differentiable functions defined on the same unconstrained domain space. Then, the author illustrates an alternative Multiple-Gradient Descent procedure for Multi-Objective Optimization problems that is based on this new method. In particular, the proposed method consists in finding the shared descent direction solving a relatively cheap Linear Programming (LP) problem, where the LP's objective function and the constraints are defined by the gradients of the objective functions of the Multi-Objective Optimization problem. More precisely, the formulation of the LP problem is such that, if a shared descent direction does not exist for the objective functions, but a non-ascent direction for all the objectives does, the LP problem returns the latter. Moreover, the author defines a new backtracking strategy for Multiple-Gradient Descent methods such that, if the proposed LP is used for computing the direction, the ability to reach and/or explore the Pareto set and the Pareto front is improved.
A theoretical analysis of the properties of the new methods is performed, and tests on classic Multi-Objective Optimization problems are proposed to assess their goodness.
\end{abstract}






\begin{keyword}
Multiple-Gradient Descent \sep Multi-Objective Optimization \sep Linear Programming
\MSC[2020]
90C29 
\sep 65K05 
\sep 90C26 
\end{keyword}

\end{frontmatter}



\section{Introduction}\label{sec:intro}

Multi-Objective Optimization (MOO) is the area in optimization that deals with problems that involve multiple, and often competing, objectives that must be optimized simultaneously. The importance of such a kind of optimization problems is evident from the wide MOO applications in real-world scenarios, ranging from mechanical engineering and fluid dynamics \cite{Schiffmann2006, zadeh2009, Colombo2019}, to energy-saving strategies \cite{CUI2017681,WU20181613}, to task allocation strategies \cite{Chen2022}, and many other topics. These applications have in common the need to find optimal trade-offs between different objectives, typically characterized by competing behaviors with respect to the optimization variables. The trade-off solutions typically are represented by the so-called Pareto set, while their images in the objectives' space by the so-called Pareto front. For more details about the theory of MOO problems, we refer to \cite{Miettinen1999book, Ehrgott2005book, Ehrgott2005}.

Often, MOO problems are solved using derivative-free methods \cite{Larson_Menickelly_Wild_2019}, where the most used ones are the nature-inspired methods \cite{NatInspired_Opt_Yang2020}. In particular, Genetic Algorithms \cite{GA_Mitchell1998,Deb2002,McCall2005,GA_YANG202191,Katoch2021} and Particle Swarm Optimization algorithms \cite{PSO_1995, PSO_YANG2021111, Xu2015}, are the most popular. The main advantages of these methods are their derivative-free nature and their efficiency in exploring the domain through ``populations'' of solutions; on the other hand, they lack ``realistic'' theoretical convergence properties (e.g., see \cite{GA_YANG202191}) or they have difficulties in finding accurate solutions, typically due to premature convergence (e.g., see \cite{Xu2015,Katoch2021}).
Another common approach for solving MOO problems involves minimizing a single loss function formed by a weighted sum of the objective functions \cite{Miettinen1999book} or using specific scalarization techniques (e.g., see \cite{Pascoletti1984, Khorram2014}). These methods have stronger theoretical foundations but require multiple runs with varying parameter values to explore the Pareto set/front in depth.

A last approach for MOO problems is represented by Multiple-Gradient Descent (MGD) methods \cite{Fliege2000, Schaffler2002, Desideri2009, Desideri2012_MGDA2, Desideri2012, Peitz2018}. MGD methods aim to build a sequence that converges to the Pareto set by solving sub-problems that determine descent directions shared by all the objective functions. These methods are characterized by robust theoretical properties; nonetheless, similarly to the scalarization techniques, they must be executed multiple times to fully explore the Pareto set/front, starting from different points sampled from the domain.
The sub-problems used in MGD methods for computing the shared descent directions are often nonlinear; for example, in \cite{Schaffler2002, Desideri2009, Desideri2012_MGDA2, Desideri2012, Peitz2018} the sub-problem consists in finding the minimum-norm vector in the anti-gradients’ convex hull. Nonetheless, there are some approaches based on Linear Programming (LP) problems, as illustrated in \cite{Fliege2000}. Even if the methods for solving LP problems are fast and very efficient, often the nonlinear sub-problems are preferred, probably due to the different properties of the directions they return. For example, the anti-gradients’ convex hull used in \cite{Schaffler2002, Desideri2009, Desideri2012_MGDA2, Desideri2012} restricts the search of the descent directions to a sub-region of all the shared descent directions that, unless of particular cases, is sufficiently distant to the boundary defined by the set of perpendicular directions to one of the gradients; on the other hand, the LP problem described in \cite{Fliege2000} look for the direction in the whole region of shared descent directions, therefore the solution can be almost perpendicular to some of the objective gradients. More details and theoretical properties of this LP problem will be given in this work since it is used as a baseline for the new MGD method proposed.

In this work, the author proposes a new LP sub-problem that is a trade-off between the cheap LP sub-problem defined in \cite{Fliege2000} and the nonlinear sub-problems with solutions characterized by ``steepness properties''. Specifically, we introduce a new LP problem for computing directions in MGD methods such that its solution is a direction that tries to maximize the distance from the boundaries of the descent directions' region and tries to follow as much as possible the direction identified by the sum of the anti-gradients. For doing so, we start from the LP problem described in \cite{Fliege2000} and we modify its objective function and its constraints to achieve these properties. A theoretical analysis for characterizing the solutions of the new LP problem is performed. In particular, we prove that the new LP formulation admits the null direction as a solution for Pareto critical points only in the following two situations: $i$) all the non-ascent directions shared by all the objective functions are perpendicular to all the gradients; $ii$) the only non-ascent direction shared by all the objectives is the null vector. In a third situation, where there is at least one non-ascent direction that is a descent direction (i.e., not null) for at least one objective, the LP problem returns one of these vectors and not the null vector. The latter characteristic (not present in the baseline LP problem) is an extra property that the author deliberately incorporates in the new LP problem to enhance its interaction with a new backtracking strategy for MGD methods, which is also proposed in this work.

To the best of the author's knowledge, the backtracking strategies used for MGD methods are always focused on building a sequence of vectors $\{\v{x}^{(k)}\}_{k\in\N}$ that is strictly decreasing for all the objective functions (e.g., see all the MGD methods cited above). These backtracking strategies can lead the sequence to very good (approximated) Pareto optimal solutions or can stop it ``prematurely'', i.e., as soon as they do not find a sufficiently small step to decrease all the objectives along the chosen direction. In order to avoid this latter case and improve the possibility of building a sequence that converges toward a Pareto optimal solution, we develop a new backtracking strategy. This new strategy is designed to accept the new point $\v{x}^{(k+1)}$ if it is non-dominated by the previous point $\v{x}^{(k)}$ when the Armijo condition is not satisfied for all objectives for all the backtracking steps.
Additionally, we endow the backtracking strategy with a ``storing property''; i.e., in a sequence, we keep apart all the vectors $\v{x}^{(k)}$ that do not dominate $\v{x}^{(k+1)}$ and that are not dominated by it, since this vectors are candidate Pareto optimals, and not only the last one. A theoretical analysis of MGD methods based on this new backtracking strategy is performed and convergence properties are established.

Through numerical experiments, we analyze the performance of our new methods, both the new LP problem and the new backtracking strategy, and we compare them with the baseline given by the literature in \cite{Fliege2000}. The experiment results indicate that our backtracking strategy consistently offers only advantages with respect to the ``strictly decreasing'' ones. While the new LP problem performs well, its benefits are most evident when combined with the new backtracking strategy, outperforming the baseline when applied to MOO problems characterized by large regions of Pareto critical points.

The work is organized as follows. In the next section (\Cref{sec:MOOintro}), the main notations and definitions used in MOO are listed. In \Cref{sec:mymethod}, the new LP sub-problem for shared descent directions is presented, whereas in Subsection \ref{sec:theory} the theoretical characterization of the sub-problem solutions is described. In \Cref{sec:mgd} the new backtracking strategy is introduced and both the theoretical convergence analyses and its implementation pseudo-codes are reported. \Cref{sec:experiments} illustrates numerical results where the proposed methods are compared with a baseline on some typical MOO test problems taken from the literature; in particular, the results assess the potential of the new LP sub-problem and the new backtracking strategy. We end the paper with some conclusions drawn in \Cref{sec:conclusion}.

\section{Multi-Objective Optimization Setting and Notations}\label{sec:MOOintro}

In this work, we consider the case of an unconstrained Multi-Objective Optimization (MOO) problem
\begin{equation}\label{eq:MOOprob}
    \min_{\v{x}\in\R^n}(f_1(\v{x}),\ldots ,f_m(\v{x}))\,,
\end{equation}
with $m$ differentiable objective functions $f_i:\R^n\rightarrow\R$, for each $i=1,\ldots ,m$.

In this section, for the reader's convenience, we recall the main definitions related to MOO problems like \eqref{eq:MOOprob} and Pareto optimality. For a more detailed introduction about MOO, for example, see \cite{Miettinen1999book,Ehrgott2005book,Ehrgott2005}.

\begin{notaz}[Order relations and vectors]
    Let $\v{v}, \v{w}\in\R^n$. Then, we denote by $\v{v}<\v{w}$ the relation between $\v{v}$ and $\v{w}$ such that $v_i< w_i$, for each $i=1,\ldots , n$ (analogous for $\leq$, $>$, and $\geq$). We denote by $\v{v}\not <\v{w}$ the opposite of the relation $\v{v}<\v{w}$ (analogous for $\not\leq$, $\not >$, and $\not \geq$); i.e., $\v{v}\not < \v{w}$ if there is $i\in\{1,\ldots ,n\}$ such that $v_i\geq w_i$.
\end{notaz}

\begin{defin}\label{def:Pareto_definitions}
    Let $\v{f}:\R^n\rightarrow\R^m$ be the function such that $\v{f}(\v{x})=(f_1(\v{x}),\ldots ,f_m(\v{x}))$, where $f_1,\ldots f_m$ are the $m$ objective functions of \eqref{eq:MOOprob}. Then, with respect to \eqref{eq:MOOprob}, we have that:
    \begin{enumerate}
        \item $\v{x}_1\in\R^n$ \emph{dominates} $\v{x}_0\in\R^n$ if $\v{f}(\v{x}_1)\leq \v{f}(\v{x}_0)$ and $\v{f}(\v{x}_1) \neq \v{f}(\v{x}_0)$. Alternatively, we can say that $\v{x}_0$ \emph{is dominated by} $\v{x}_1$. 
        
        \item $\v{x}_0\in\R^n$ is \emph{non-dominated} by $\v{x}_1\in\R^n$ if $\v{x}_1$ does not dominate $\v{x}_0$; i.e., $\v{f}(\v{x}_1)\not\leq\v{f}(\v{x}_0)$

        \item $\v{x}^*\in\R^n$ is a \emph{local Pareto optimal} for \eqref{eq:MOOprob} if there is $\varepsilon > 0$ such that $\v{x}^*$ is non-dominated by $\v{x}$, for each $\v{x}\in\R^n$, $\norm{\v{x}^*-\v{x}}_2\leq \varepsilon$.

        \item $\v{x}^*\in\R^n$ is a \emph{global Pareto optimal} for \eqref{eq:MOOprob} if $\v{x}^*$ is non-dominated by $\v{x}$, for each $\v{x}\in\R^n$.

        \item The set of all and only the global Pareto optimals is defined as \emph{Pareto set} of \eqref{eq:MOOprob}; its image through $\v{f}$ is defined as \emph{Pareto front} of \eqref{eq:MOOprob}.

        \item $\widehat{\v{x}}\in\R^n$ is defined as \emph{Pareto critical point} for \eqref{eq:MOOprob} if descent directions for all the objective functions $f_1,\ldots ,f_m$ evaluated in $\widehat{\v{x}}$ do not exist; i.e, if 
        \begin{equation}\label{eq:criticalPareto}
            J_{\v{f}}(\widehat{\v{x}}) \v{v} \not < \v{0}\,,
        \end{equation}
        for each $\v{v}\in\R^n$, where $J_{\v{f}}$ denotes the Jacobian of $\v{f}$.
        By consequence, $\widehat{\v{x}}\in\R^n$ is not a Pareto critical point for \eqref{eq:MOOprob} if a descent direction for all the objective functions $f_1,\ldots ,f_m$ evaluated in $\widehat{\v{x}}$ exists; i.e, if $\v{p}\in\R^n$ exists such that
        \begin{equation}\label{eq:not_criticalPareto}
            J_{\v{f}}(\widehat{\v{x}}) \v{p} < \v{0}\,.
        \end{equation}
    \end{enumerate}
\end{defin}

\begin{rem}[Necessary Condition for Local Pareto Optimals]\label{rem:criticalPareto_necessary_optimal}
    Being a Pareto critical is a necessary condition for being a local Pareto optimal. Specifically, if $\v{x}^*$ is a local Pareto optimal, then $\v{x}^*$ is a Pareto critical.
\end{rem}

After recalling the main entities related to MOO, we introduce the definition of \emph{non-ascent directions' regions} for the objective functions. This definition will be useful for studying and analyzing the properties of the proposed MGD method (see \Cref{sec:mymethod}).

\begin{defin}[Non-ascent Directions' Regions]\label{def:nonascent}
Let $f_1,\ldots f_m$ be the $m$ objective functions of \eqref{eq:MOOprob}. Let $\v{x}\in\R^n$ be fixed. Then, for each $i=1,\ldots ,m$, we define \emph{non-ascent directions' region} of $f_i$ evaluated at $\v{x}$ the set
\begin{equation}\label{eq:nonascent}
    P_i(\v{x}):=\{\v{p}\in\R^n\,|\, \nabla f_i(\v{x})^T\v{p}\leq 0\}\,;
\end{equation}
i.e., $P_i(\v{x})$ is the union of the descent directions of $f_i$ at $\v{x}$ and the directions perpendicular to $\nabla f_i(\v{x})$. Moreover, we define as \emph{region of shared non-ascent directions} the intersection of all the non-ascent directions' region, i.e., the set $P(\v{x}):=\bigcap_{i=1}^m P_i(\v{x})$ (it always contains the null vector).
\end{defin}

\subsection{Descent Methods for Multi-Objective Optimization}\label{sec:descentMOO}

Descent methods for MOO problems are iterative methods such that
\begin{equation}\label{eq:iterative_descent}
\begin{cases}
    \v{x}^{(0)}\in\R^n \ \text{given}\\
    \v{x}^{(k+1)} = \v{x}^{(k)} + \eta^{(k)} \v{p}^{(k)}\,, \ \forall \ k\geq 0
\end{cases}
\,,
\end{equation}

where $\v{p}^{(k)}\in\R^n$ is a descent direction for all the objective functions evaluated at $\v{x}^{(k)}$. 

In literature, one of the favorite approaches used for the computation of a descent direction $\v{p}^*$ for $m$ functions $f_1, \ldots ,f_m$ evaluated at a point $\v{x}$ is the one based on finding the minimum-norm vector in the anti-gradients' convex hull \cite{Schaffler2002, Desideri2009, Desideri2012_MGDA2, Desideri2012, Peitz2018}, i.e.:
\begin{equation}
    \v{p}^* = - \sum_{i=1}^{m} \alpha_i^* \nabla f_i(\v{x})\,,
\end{equation}
where
\begin{equation}\label{eq:antigradsConvHull}
    \v{\alpha}^* = \argmin_{\v{\alpha}}
    \left\lbrace \ \left\lVert \sum_{i=1}^{m} \alpha_i \nabla f_i(\v{x}) \right\rVert^2 \ \text{ s.t. } \ \alpha_1,\ldots ,\alpha_m \geq 0\,, \ \sum_{i=1}^m \alpha_i = 1 \right\rbrace \,,
\end{equation}

Then, the solution $\v{\alpha}^*$ of problem \eqref{eq:antigradsConvHull} can be found by solving a quadratic optimization problem. 

On the other hand, there are methods simpler than \eqref{eq:antigradsConvHull} for finding a descent direction $\v{p}^*$; for example, the method described in \cite{Fliege2000} and based on solving the Linear Programming (LP) problem
\begin{equation}\tag{$\mathrm{LP}_\mathrm{base}$}\label{eq:fliege2000}
    \begin{cases}
        \min_{\beta\in\R} \beta\\
        \nabla f_i(\v{x})^T \v{p} \leq \beta\,, \quad \forall i=1,\ldots ,m \\
        \norm{\v{p}}_\infty \leq 1
    \end{cases}
    \,,
\end{equation}
i.e., explicitly, the problem 
\begin{equation}\tag{$\mathrm{LP}'_\mathrm{base}$}\label{eq:fliege2000_explicit}
    \begin{cases}
        \min_
        {\v{\rho}\in\R^{n+1}} 
        [0,\ldots ,0, 1]
        %
        \ \v{\rho}
        \\
        \quad \\
        \left[
        \begin{array}{c|c}
            \nabla f_1(\v{x})^T & -1\\
            \vdots & \vdots\\
            \nabla f_m(\v{x})^T & -1
        \end{array} 
        \right]
        \v{\rho}
        \leq \v{0}
        \\
        \quad \\
        -1\leq \rho_i \leq 1\,, \ \forall i=1,\ldots ,n
    \end{cases}
    \,,
\end{equation}
with solution $\v{\rho}^*=(\v{p}^*,\beta^*)\in\R^{n+1}$ that is the concatenation of the shared descent direction found $\v{p^*}$ and the optimal value $\beta^*\leq 0$ found for $\beta$.

\section{A New Method Based on Linear Programming}\label{sec:mymethod}

In this paper, one of our focuses is the development of a new LP approach for finding a descent direction shared by multiple functions. In particular, we start from the LP method described in \cite{Fliege2000} (see \eqref{eq:fliege2000} and \eqref{eq:fliege2000_explicit}), and we modify it. More precisely, analogously to \cite{Fliege2000}, the LP we define finds descent directions if they exist; otherwise, non-ascent directions are returned (see \Cref{def:nonascent}). Nonetheless, these non-ascent directions are positively used by our MGD method, contrary to other methods in literature (see \Cref{sec:intro}); indeed, in this work, we also define a novel and different backtracking strategy for the optimization procedure \eqref{eq:iterative_descent} that is able to exploit them. In this section we focus on the new LP approach, postponing the definition of the new backtracking strategy in \Cref{sec:mgd}.

First of all, solving \eqref{eq:fliege2000}, we observe that we obtain a shared descent direction $\v{p}^*$ (assuming it exists) such that its scalar product with each gradient is less than the minimum value $\beta^*$ (always less than or equal to zero). Then, the gradient magnitudes have a relatively small influence on finding $\v{p}^*$, such as the gradient directions that only define the feasible region in the search space. In other words, the formulation of problem \eqref{eq:fliege2000} guarantees the finding of a shared descent direction $\v{p}^*$ but does not guarantee that moving along the direction of $\v{p}^*$ all the objective functions decrease coherently with their gradients.

In addition, problem \eqref{eq:fliege2000} has good probabilities of returning the null vector as the solution (stopping the optimization procedure) also in the special case of Pareto critical points characterized by the presence of at least one non-ascent direction that is a descent direction for at least one objective. Nonetheless, such a kind of non-null direction could generate a new step of \eqref{eq:iterative_descent} where $\v{x}^{(k+1)}$ is non-dominated by $\v{x}^{(k)}$; therefore, under these circumstances, it can be useful to add this option to a MGD optimization procedure, removing the null vector from the solution set of the LP problem.

Given the observations above, the purposes of the proposed new LP problem are the following:
\begin{enumerate}
    \item\label{item:pstar_gmagnitudes} Modify problem \eqref{eq:fliege2000} such that, moving along the direction of $\v{p}^*$, all the objective functions decrease as much coherently as possible with their gradient's characteristics;
    \item\label{item:allows_perpendicularity} Avoid returning the null vector as a solution for Pareto critical points, if there is at least one non-ascent direction that is a descent direction for at least one objective. We recall that we are interested in these directions too because our new backtracking strategy will exploit them (see \Cref{sec:mgd}).
\end{enumerate}

Before starting with the formulation of the new LP problem, we introduce some notations.
We denote by $\v{g}_i(\v{x})$ the gradient $\nabla f_i(\v{x})$, for each $i=1,\ldots, m$, and by $\v{g}(\v{x})$ their sum, i.e.:
\begin{equation}\label{eq:g}
    \v{g}(\v{x}) := \sum_{i=1}^m \v{g}_i(\v{x})\,.
\end{equation}
In addition, we denote by $\widebar{\v{g}}_i(\v{x})$ the gradient $\v{g}_i$ normalized with respect to the euclidean norm; i.e., 
\begin{equation}\label{eq:gbar_i}
    \widebar{\v{g}}_i(\v{x}) := \frac{\v{g}_i(\v{x})}{\norm{\v{g}_i(\v{x})}_2}\,,
\end{equation}
for each $i=1,\ldots ,m$. Then, we denote by $G(\v{x})$ and $\widebar{G}(\v{x})$ the matrices with rows defined by the gradients (i.e., the Jacobian of $\v{f}$, see \Cref{def:Pareto_definitions}) and the normalized gradients, respectively; specifically:
\begin{equation}\label{eq:G_Gbar}
    G(\v{x}):=
    \begin{bmatrix}
        \v{g}(\v{x})_1^T\\
        \vdots\\
        \v{g(\v{x})}_m^T\\
    \end{bmatrix}
    \in\R^{m\times n}
    \,,
    \quad
    \widebar{G}(\v{x}):=
    \begin{bmatrix}
        \widebar{\v{g}}(\v{x})_1^T\\
        \vdots\\
        \widebar{\v{g}}(\v{x})_m^T\\
    \end{bmatrix}
    \in\R^{m\times n}
    \,.
\end{equation}

Given \eqref{eq:G_Gbar}, the inequality constraints of \eqref{eq:fliege2000_explicit} defined using the gradients can be rewritten as
\begin{equation}\label{eq:fliege2000_grad_ineq}
    \left[
        \begin{array}{c|c}
            G(\v{x}) & -\v{e}
        \end{array} 
        \right]
        \v{\rho}
        \leq \v{0}
        \,,
\end{equation}
where $\v{e}:=(1,\ldots ,1)\in\R^m$.

\subsection{Linear Programming Problem Formalization}\label{sec:myLP_formalization}

For improving the influence of the gradient magnitudes according to the purpose illustrated in item \ref{item:pstar_gmagnitudes}, we need to involve the gradients both in the objective function and in the boundary values of the variables corresponding to the descent direction. Specifically, we change the objective function of the problem \eqref{eq:fliege2000} into 
\begin{equation}\label{eq:obj_LP_simple}
\v{g}(\v{x})^T\v{p} + c_\beta(\v{x}) \beta
\end{equation}
and the boundary values into
\begin{equation}\label{eq:bounds_LP_simple}
    \norm{\v{p}}_\infty \leq \gamma(\v{x}) \,, \quad \beta\leq 0\,,
\end{equation}
where $c_\beta(\v{x}) > 0$ (in practice, $c_\beta(\v{x}) > \norm{\v{g}(\v{x})}_2$, see \Cref{sec:theory} later) and
\begin{equation}\label{eq:gamma_LP}
    \gamma(\v{x}) := \max\{\norm{\v{g}_1(\v{x})}_{\infty},\ldots ,\norm{\v{g}_m(\v{x})}_{\infty}, \norm{\v{g}(\v{x})}_{\infty}\}\,.
\end{equation}
Clearly, for each $\v{p}$ satisfying \eqref{eq:fliege2000_grad_ineq} and $\beta\leq 0$, it holds that $\v{g}(\v{x})^T\v{p} + c_\beta(\v{x}) \beta\leq m\beta + c_{\beta}(\v{x})\beta\leq 0$.

Using the same notation of problem \eqref{eq:fliege2000_explicit}, the objective function is now
\begin{equation}\label{eq:obj_LP}
    [ \ \v{g}(\v{x})^T \ | \ c_\beta(\v{x}) \ ]\, \v{\rho}\,,
\end{equation}
while the boundary values are
\begin{equation}\label{eq:boundaries_LP}
    -\gamma(\v{\v{x}}) \leq \rho_i \leq \gamma(\v{x})\,, \ \forall i=1,\ldots ,n \quad \text{ and } \quad \rho_{n+1}\leq 0\,.
\end{equation}

In \Cref{fig:comparison_objf_boundaries} we illustrate the proposed objective function \eqref{eq:obj_LP} with respect to the boundaries \eqref{eq:boundaries_LP}, in comparison with the corresponding objective function and boundaries of problem \eqref{eq:fliege2000}. 

\begin{figure}[htb!]
    \centering
    \subcaptionbox{Obj. function $\beta$ s.t. $\norm{\v{p}}_\infty\leq 1$.}{\includegraphics[trim={2cm 1cm 2cm 2cm},clip,width=0.49\textwidth]{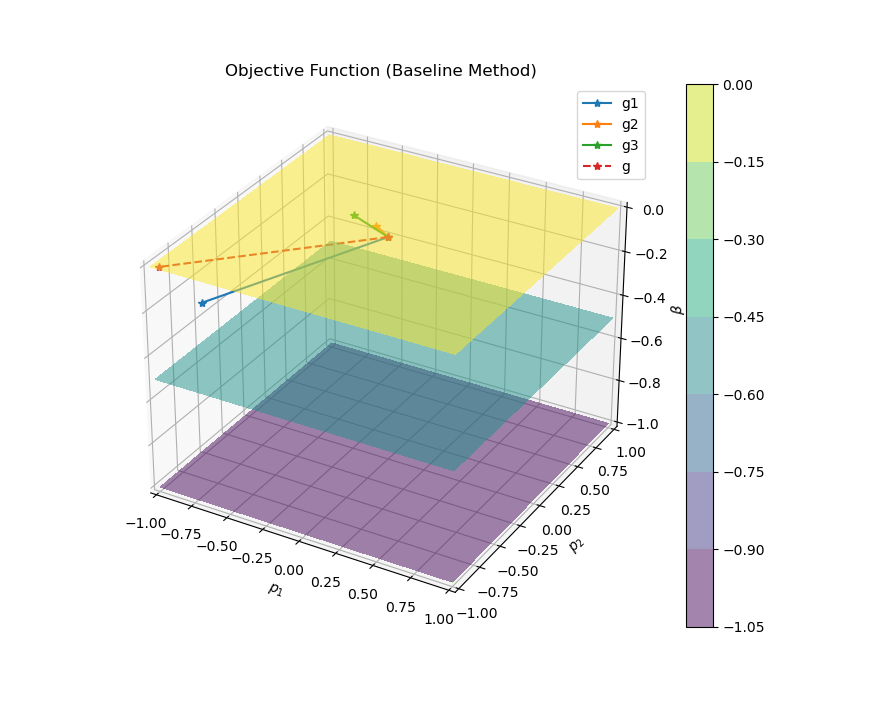}}
    \subcaptionbox{Obj. function $\v{g}(\v{x})^T\v{p}+c_\beta(\v{x}) \beta$ s.t. $\norm{\v{p}}_\infty\leq \gamma(\v{x})$.}{\includegraphics[trim={2cm 1cm 2cm 2cm},clip,width=0.49\textwidth]{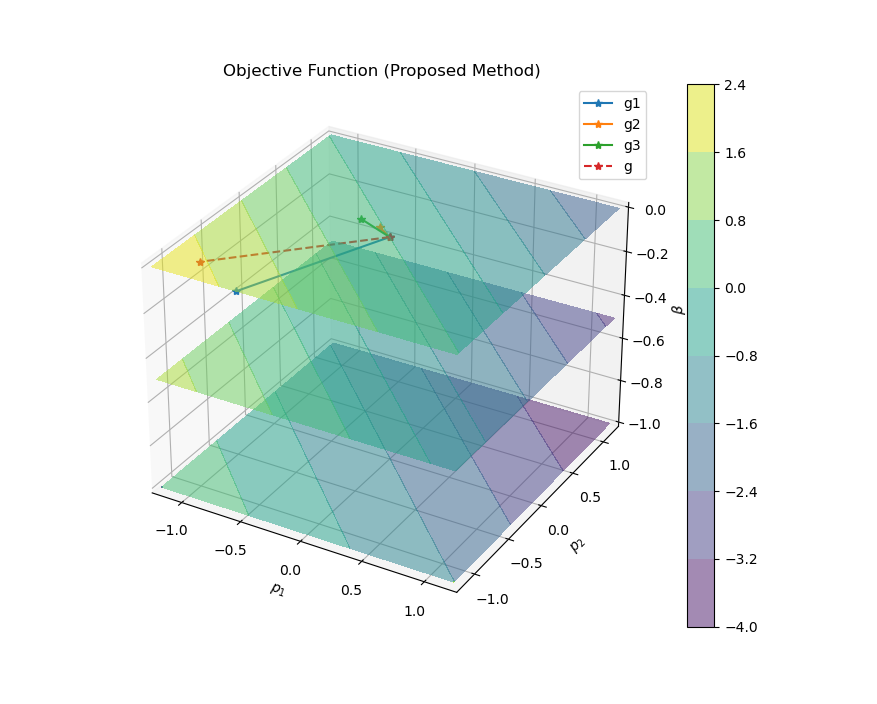}}
    \caption{Visual comparison of the objective function of problem \eqref{eq:fliege2000} (sub-figure a) and the proposed objective function \eqref{eq:obj_LP} (sub-figure b). The figures illustrate three function evaluations in $\R^3$ for three fixed values of $\beta=-1, -0.5, 0$, three randomly generated gradients $\v{g}_1(\v{x}),\v{g}_2(\v{x}),\v{g}_3(\v{x})\in\R^2$, and (for sub-figure b) $c_\beta(\v{x})=\norm{\v{g}(\v{x})}_2+0.25$.}
    \label{fig:comparison_objf_boundaries}
\end{figure}

The change of objective function is proposed because it forces the solution to find a descent direction that minimizes $\beta$ but follows as much as possible the direction identified by the sum of the anti-gradients. On the other hand, the boundary values of the descent directions have been changed in order to maintain at least a weak connection with the norms of the function gradients for a better embedding with an iterative method like \eqref{eq:iterative_descent}; this connection is not preserved if we use the condition $\norm{\v{p}}_\infty\leq 1$ like in \eqref{eq:fliege2000}. 

After the modification of the objective function and the boundary values, we observe that also the inequality constraints \eqref{eq:fliege2000_grad_ineq} can be modified to improve the quality of the descent direction $\v{p}^*$. 

Ideally, the more parallel a descent direction is to the anti-gradient, the better; then, in the region of the descent directions, the more distant it is from the hyperplane perpendicular to the gradient, the better. Actually, the inequality constraints \eqref{eq:fliege2000_grad_ineq} are implicitly trying to apply the latter reasoning because the distances of $\v{p}$ from each one of the hyperplanes perpendicular to the $m$ gradients are lower bounded by a quantity dependent on $\beta$. Specifically, denoted by $H_i(\v{x})$ the hyperplane perpendicular to $\v{g}_i(\v{x})$, if $\v{\rho}=(\v{p},\beta)$ satisfies \eqref{eq:fliege2000_grad_ineq}, then the distance of $\v{p}$ from $H_i(\v{x})$ is lower bounded by $|\beta|/\norm{\v{g}_i(\v{x})}_2$, i.e.:
\begin{equation}\label{eq:fliege2000_distbounds}
    {\rm dist}\left(H_i(\v{x}), \v{p}\right) = \frac{\left| \v{g}_i(\v{x})^T\v{p} \right|}{\norm{\v{g}_i(\v{x})}_2} \geq \frac{|\beta|}{\norm{\v{g}_i(\v{x})}_2}\,, \quad \forall \ i=1,\ldots ,m\,.
\end{equation}

Therefore, looking at \eqref{eq:fliege2000_distbounds}, we observe that a solution $\v{p}^*$ can result to be nearly-perpendicular to those gradients characterized by a large norm, because the distance's lower bound tends toward zero. This phenomenon can be a problem for an iterative method like \eqref{eq:iterative_descent}, because the step can be almost-perpendicular to the steepest descent direction of one of the steepest objective functions of the problem (see the lightly obscured region varying with $\beta$ and the vectors $\v{\rho}^*, \v{p}^*$ in \Cref{fig:comparison_solutions}a).

Then, we modify the inequality constraints \eqref{eq:fliege2000_grad_ineq} using the matrix $\widebar{G}(\v{x})$ instead of $G(\v{x})$; i.e., we use the inequality constraints
\begin{equation}\label{eq:my_grad_ineq}
    \left[
        \begin{array}{c|c}
            \widebar{G}(\v{x}) & -\v{e}
        \end{array} 
        \right]
        \v{\rho}
        \leq \v{0}
        \,.
\end{equation}

For a point $\v{x}$ that is not Pareto critical, if \eqref{eq:my_grad_ineq} is used as inequality constraint, we can prove (see \Cref{lem:my_LP}) that the minimum distance of a solution $\v{p}^*$ from all the hyperplanes $H_i(\v{x})$ is at least $|\beta^*|$ (see the lightly obscured region varying with $\beta$ and the vectors $\v{\rho}^*, \v{p}^*$ in \Cref{fig:comparison_solutions}b). 

In conclusion, applying to \eqref{eq:fliege2000_explicit} the changes listed above, we obtain the following new LP problem:
\begin{equation}\tag{LP$_\mathrm{new}$}\label{eq:my_LP_explicit}
    \begin{cases}
        \min_{\v{\rho}\in\R^{n+1}} [ \ \v{g}(\v{x})^T \ | \ c_\beta(\v{x}) \ ]\, \v{\rho}\\
        %
        %
        \left[
        \begin{array}{c|c}
            \widebar{G}(\v{x}) & -\v{e}
        \end{array} 
        \right]
        \v{\rho}
        \leq \v{0}\\
        %
        %
        -\gamma(\v{x}) \leq \rho_i \leq \gamma(\v{x})\,, \ \forall i=1,\ldots ,n\\
        \rho_{n+1}\leq 0
    \end{cases}
    \,.
\end{equation}

In \Cref{fig:comparison_solutions} we illustrate an example of different solutions obtained by applying \eqref{eq:fliege2000_explicit} and \eqref{eq:my_LP_explicit} to the same set of gradients.

\begin{figure}[htb!]
    \centering
    \subcaptionbox{Problem \eqref{eq:fliege2000}.}{\includegraphics[trim={2cm 1cm 2cm 2cm},clip,width=0.49\textwidth]{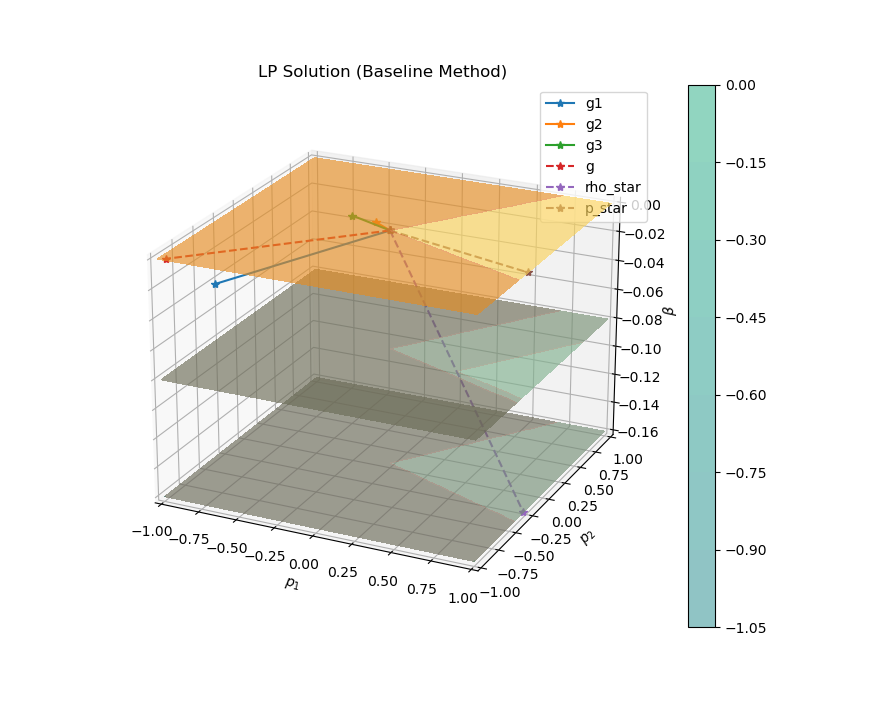}}
    \subcaptionbox{Problem \eqref{eq:my_LP_explicit}.}{\includegraphics[trim={2cm 1cm 2cm 2cm},clip,width=0.49\textwidth]{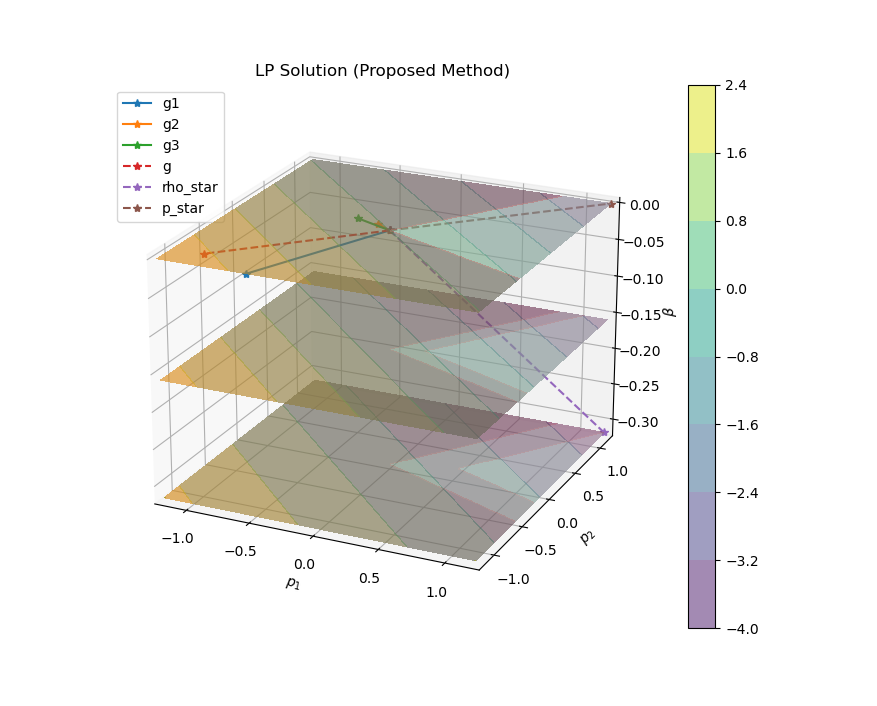}}
    \caption{Visual comparison of the solutions $\v{\rho}^*$ and $\v{p}^*$ obtained from problem \eqref{eq:fliege2000} (sub-figure a) and problem \eqref{eq:my_LP_explicit} (sub-figure b), respectively, given the same gradients $\v{g}_1(\v{x}),\v{g}_2(\v{x}),\v{g}_3(\v{x})\in\R^2$, and (for sub-figure b) $c_\beta(\v{x})=\norm{\v{g}(\v{x})}_2+0.25$. The figures illustrate three objective function evaluations in $\R^3$ for three fixed values of $\beta=\beta^*, \beta^*/2, 0$. The non-feasible region of each problem is obscured and changes with $\beta$ (heavily obscured for the region of non-descent directions).}
    \label{fig:comparison_solutions}
\end{figure}

\subsection{Theoretical Results and Characterization}\label{sec:theory}

Analogously to Lemma 3 in \cite{Fliege2000} for problem \eqref{eq:fliege2000}, we illustrate some theoretical results for the LP problem \eqref{eq:my_LP_explicit}.
We report the main results of the Lemma for problem \eqref{eq:fliege2000} in the following, adapted to the notation used in this work..

\begin{lem}[Lemma 3 - \cite{Fliege2000}]\label{lem:fliege2000}
    Let $V^*, \beta^*$ be the solution set and the optimum value of problem \eqref{eq:fliege2000}, respectively. Then:
    \begin{enumerate}
        \item if $\v{x}$ is Pareto critical for the functions $f_1,\ldots ,f_m$, then $\v{0}\in V^*$ and $\beta^*=0$;
        \item if $\v{x}$ is not Pareto critical for the functions $f_1,\ldots ,f_m$, then $\beta^* < 0$ and $\v{p}^*$ is a descent direction for all the functions $f_1,\ldots ,f_m$, for any $\v{p}^*\in V^*$;
   \end{enumerate}
\end{lem}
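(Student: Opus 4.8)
The plan is to exploit two elementary facts about problem \eqref{eq:fliege2000}. First, the pair $(\v{p},\beta)=(\v{0},0)$ is always feasible, since $\nabla f_i(\v{x})^T\v{0}=0$ and $\norm{\v{0}}_\infty=0\le 1$; consequently $\beta^*\le 0$ for every $\v{x}$. Second, and more importantly, a feasible pair $(\v{p},\beta)$ has $\beta<0$ if and only if $\v{p}$ is a descent direction shared by all objectives: the constraints $\nabla f_i(\v{x})^T\v{p}\le\beta$ for all $i$ force $J_{\v{f}}(\v{x})\v{p}<\v{0}$ exactly when $\beta<0$. This turns the dichotomy in the statement into a direct translation of the notion of Pareto criticality in \Cref{def:Pareto_definitions}, item 6.

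For the first claim, I would argue by contradiction. Suppose $\v{x}$ is Pareto critical but $\beta^*<0$. Then any optimal pair $(\v{p}^*,\beta^*)$ satisfies $\nabla f_i(\v{x})^T\v{p}^*\le\beta^*<0$ for all $i$, i.e.\ $J_{\v{f}}(\v{x})\v{p}^*<\v{0}$, so $\v{p}^*$ would be a descent direction common to all objectives, contradicting criticality through \eqref{eq:criticalPareto}. Hence $\beta^*=0$, and since $(\v{0},0)$ attains this value it is optimal, giving $\v{0}\in V^*$.

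For the second claim, non-criticality yields, by \eqref{eq:not_criticalPareto}, a vector $\v{p}$ with $\nabla f_i(\v{x})^T\v{p}<0$ for all $i$; note that necessarily $\v{p}\neq\v{0}$. The only care needed is to make such a direction admissible for the box constraint: rescaling $\v{p}$ by the positive factor $1/\norm{\v{p}}_\infty$ preserves every strict inequality, since the scalar products are positively homogeneous of degree one, while enforcing $\norm{\v{p}}_\infty=1$; setting $\beta:=\max_i\nabla f_i(\v{x})^T\v{p}<0$ then produces a feasible pair with strictly negative objective value. Therefore $\beta^*<0$, and every optimal $(\v{p}^*,\beta^*)$ again satisfies $\nabla f_i(\v{x})^T\v{p}^*\le\beta^*<0$ for all $i$, so each $\v{p}^*\in V^*$ is a descent direction for all the objectives.

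I do not expect a genuine obstacle here, as the result is essentially a standard linear-programming reformulation of Pareto criticality. The only point requiring mild attention is the rescaling step in the second part, where one must verify that normalizing to meet $\norm{\v{p}}_\infty\le 1$ does not destroy the strict negativity of the scalar products; this is immediate from positive homogeneity. A secondary bookkeeping point is interpreting $V^*$ as the set of optimal direction components $\v{p}$, which is what the notation $\v{0}\in V^*$ and $\v{p}^*\in V^*$ presupposes.
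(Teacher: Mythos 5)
Your proof is correct. Note that the paper itself contains no proof of this statement: \Cref{lem:fliege2000} is imported (with adapted notation) from \cite{Fliege2000} as background material, and your argument is essentially the standard one from that source --- feasibility of $(\v{0},0)$ gives $\beta^*\leq 0$, and the observation that feasible pairs with $\beta<0$ correspond exactly to shared descent directions (together with rescaling to satisfy $\norm{\v{p}}_\infty\leq 1$) settles both the critical and non-critical cases.
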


For the theoretical analysis of our problem, we start with a proposition that characterizes the influence of the parameter $c_\beta$ on the objective function evaluation. In the next results, we will show how this proposition is necessary to guarantee $\beta^*<0$ for non-null solutions of \eqref{eq:my_LP_explicit}. 

\begin{notaz}
    For ease of notation, from now on we will drop the dependency on $\v{x}$ in the LP problems.
\end{notaz}

\begin{prop}\label{prop:cbeta_lbound_0}
    Let $\v{\rho}_0=(\v{p}_0, \beta_0)\in\R^{n+1}$. Let $\v{c}$ denotes the vector characterizing the objective function \eqref{eq:obj_LP}; i.e., $\v{c}^T:=[\v{g}^T|c_\beta]$. Then, if $c_\beta >\norm{\v{g}}_2$, we have that 
        \begin{equation}\label{eq:ineq_cbeta_0}
            \v{c}^T \v{\rho}_0 - \v{c}^T \v{\rho}_1 > 0
        \end{equation}
    for each $\v{\rho}_1=(\v{p}_1,\beta_1)$ such that $\beta_1<\beta_0$  and ${\rm dist}(\v{p}_0,\v{p}_1)\geq (\beta_0 - \beta_1)$.

    Specifically, if $c_\beta=\norm{\v{g}}_2+\varepsilon$, $\varepsilon >0$, we have that
    \begin{equation}\label{eq:ineq_cbeta_1}
            \v{c}^T \v{\rho}_0 - \v{c}^T \v{\rho}_1 \geq (\beta_0 - \beta_1)\varepsilon > 0\,.
    \end{equation}
\end{prop}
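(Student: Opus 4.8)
The plan is to collapse the claim to a one-dimensional inequality and then argue that the $\beta$-gap term dominates the only summand whose sign is not controlled. Writing $\v{\rho}_0=(\v{p}_0,\beta_0)$, $\v{\rho}_1=(\v{p}_1,\beta_1)$ and $\v{c}^T=[\v{g}^T\mid c_\beta]$, I would start from the direct expansion
\begin{equation*}
\v{c}^T\v{\rho}_0-\v{c}^T\v{\rho}_1=\v{g}^T(\v{p}_0-\v{p}_1)+c_\beta(\beta_0-\beta_1).
\end{equation*}
Here the second summand $c_\beta(\beta_0-\beta_1)$ is strictly positive, because $\beta_1<\beta_0$ and $c_\beta>0$, whereas the alignment term $\v{g}^T(\v{p}_0-\v{p}_1)$ may be negative. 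The whole proof thus reduces to certifying that the positive $\beta$-gap outweighs the worst possible negative contribution of the alignment term.

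Next I would bound the alignment term from below by Cauchy--Schwarz, reading $\mathrm{dist}(\v{p}_0,\v{p}_1)$ as the Euclidean distance $\norm{\v{p}_0-\v{p}_1}_2$, to get $\v{g}^T(\v{p}_0-\v{p}_1)\geq -\norm{\v{g}}_2\,\mathrm{dist}(\v{p}_0,\v{p}_1)$, and hence
\begin{equation*}
\v{c}^T\v{\rho}_0-\v{c}^T\v{\rho}_1\geq c_\beta(\beta_0-\beta_1)-\norm{\v{g}}_2\,\mathrm{dist}(\v{p}_0,\v{p}_1).
\end{equation*}
At this stage the distance hypothesis relating $\mathrm{dist}(\v{p}_0,\v{p}_1)$ and the gap $\beta_0-\beta_1$ is used to control the Cauchy--Schwarz loss, replacing $\norm{\v{g}}_2\,\mathrm{dist}(\v{p}_0,\v{p}_1)$ by $\norm{\v{g}}_2(\beta_0-\beta_1)$, after which the remaining quantity is $(c_\beta-\norm{\v{g}}_2)(\beta_0-\beta_1)$, strictly positive precisely because of the standing assumption $c_\beta>\norm{\v{g}}_2$; this gives \eqref{eq:ineq_cbeta_0}. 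For the sharp estimate \eqref{eq:ineq_cbeta_1}, substituting $c_\beta=\norm{\v{g}}_2+\varepsilon$ splits the $\beta$-gap term as $\norm{\v{g}}_2(\beta_0-\beta_1)+\varepsilon(\beta_0-\beta_1)$, the first piece cancels the Cauchy--Schwarz loss, and the surplus $\varepsilon(\beta_0-\beta_1)$ remains.

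The step I expect to be delicate is exactly the one where the distance condition is invoked: the cancellation of the $\norm{\v{g}}_2$-scaled terms works only when the displacement $\mathrm{dist}(\v{p}_0,\v{p}_1)$ is controlled by the gap $\beta_0-\beta_1$, so that the loss $\norm{\v{g}}_2\,\mathrm{dist}(\v{p}_0,\v{p}_1)$ does not overshoot $\norm{\v{g}}_2(\beta_0-\beta_1)$, while the boundary configuration $\mathrm{dist}(\v{p}_0,\v{p}_1)=\beta_0-\beta_1$ with $\v{p}_0-\v{p}_1$ antiparallel to $\v{g}$ is what makes the estimate $\varepsilon(\beta_0-\beta_1)$ sharp. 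A minor but useful point to record is that the argument never requires $\v{\rho}_0,\v{\rho}_1$ to be feasible for \eqref{eq:my_LP_explicit}: the inequality is a purely algebraic consequence of $c_\beta>\norm{\v{g}}_2$ together with the distance relation, which is what will later let the proposition exclude the value $\beta^*=0$ at non-null optima.
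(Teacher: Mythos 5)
Your proposal follows the same route as the paper's proof (expand $\v{c}^T\v{\rho}_0-\v{c}^T\v{\rho}_1$, bound the alignment term $\v{g}^T(\v{p}_0-\v{p}_1)$, factor out the gap $\beta_0-\beta_1$), but the step you yourself flag as delicate is a genuine gap: it uses the distance hypothesis in the wrong direction. After Cauchy--Schwarz you have $\v{c}^T\v{\rho}_0-\v{c}^T\v{\rho}_1\geq c_\beta(\beta_0-\beta_1)-\norm{\v{g}}_2\,\mathrm{dist}(\v{p}_0,\v{p}_1)$, and to reach $(c_\beta-\norm{\v{g}}_2)(\beta_0-\beta_1)$ you need $\mathrm{dist}(\v{p}_0,\v{p}_1)\leq\beta_0-\beta_1$, i.e.\ the displacement controlled by the gap --- exactly the condition you state in words. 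But the proposition's hypothesis grants the \emph{reverse} inequality $\mathrm{dist}(\v{p}_0,\v{p}_1)\geq\beta_0-\beta_1$, under which the Cauchy--Schwarz loss $\norm{\v{g}}_2\,\mathrm{dist}(\v{p}_0,\v{p}_1)$ can be arbitrarily large relative to the gap, and the chain collapses. Indeed, no argument can close this step for the statement as written: take $n=1$, $\v{g}=1$, $c_\beta=2>\norm{\v{g}}_2$, $\v{\rho}_0=(0,0)$, $\v{\rho}_1=(100,-1)$; then $\beta_1<\beta_0$ and $\mathrm{dist}(\v{p}_0,\v{p}_1)=100\geq 1=\beta_0-\beta_1$, yet $\v{c}^T\v{\rho}_0-\v{c}^T\v{\rho}_1=0-98<0$, violating \eqref{eq:ineq_cbeta_0}. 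Your chain (and the sharp bound \eqref{eq:ineq_cbeta_1}) is valid verbatim once the hypothesis is read as $\mathrm{dist}(\v{p}_0,\v{p}_1)\leq\beta_0-\beta_1$.

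For what it is worth, the paper's own proof hides the same problem. It writes $\norm{\v{g}}_2\norm{\v{p}_0-\v{p}_1}_2\cos\alpha+c_\beta\Delta\beta\geq\Delta\beta(\norm{\v{g}}_2\cos\alpha+c_\beta)$ with $\Delta\beta:=\beta_0-\beta_1$, and the passage from $\norm{\v{p}_0-\v{p}_1}_2\cos\alpha$ to $\Delta\beta\cos\alpha$ under $\norm{\v{p}_0-\v{p}_1}_2\geq\Delta\beta$ is valid only when $\cos\alpha\geq 0$, whereas the dangerous case is precisely $\cos\alpha<0$ (as in the counterexample above, where $\cos\alpha=-1$). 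So you have reproduced the paper's argument including its defect; if anything, your formulation, by isolating the Cauchy--Schwarz loss explicitly, makes the wrong direction of the distance hypothesis easier to spot. Be aware the direction also matters downstream: in the proof of item 3 of \Cref{lem:my_LP} the proposition is invoked with $\mathrm{dist}(\v{p}^*,\widehat{\v{p}})=\delta\geq\beta^*-\widehat{\beta}$, i.e.\ exactly in the regime your (and the paper's) estimate does not cover, so repairing the proposition's hypothesis to $\leq$ would in turn require adjusting that application. Your closing observation that feasibility of $\v{\rho}_0,\v{\rho}_1$ is never needed is correct, but it does not rescue the step above.
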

\begin{proof}
    Let $\alpha$ be the angle between $\v{g}$ and $(\v{p}_0 -\v{p}_1)$. Let $\Delta\beta$ be the difference between $\beta_0$ and $\beta_1$; i.e., $\Delta\beta:=\beta_0-\beta_1>0$. Then,
    \begin{equation}
    \begin{aligned}
        \v{c}^T \v{\rho}_0 - \v{c}^T \v{\rho}_1 &= \v{g}^T(\v{p}_0-\v{p}_1) + c_\beta \Delta\beta =\\
        &= \norm{\v{g}}_2\norm{\v{p}_0-\v{p}_1}_2 \cos\alpha + c_\beta \Delta\beta \geq\\
        &\geq \Delta\beta (\norm{\v{g}}_2\cos\alpha + c_\beta)\geq\\
        &\geq \Delta\beta (-\norm{\v{g}}_2 + c_\beta)\,.
    \end{aligned}
    \end{equation}
    Then, \eqref{eq:ineq_cbeta_0} holds if $c_\beta>\norm{\v{g}}_2$ and \eqref{eq:ineq_cbeta_1} holds if $c_\beta = \norm{\v{g}}_2 + \varepsilon$, with $\varepsilon>0$.
\end{proof}

In the following Lemma, we characterize the solutions obtained solving \eqref{eq:my_LP_explicit}, depending on the Pareto nature of $\v{x}\in\R^n$; i.e., depending on $\v{x}$ that is a Pareto critical point or not, following the lines of \Cref{lem:fliege2000}.

\begin{lem}\label{lem:my_LP}
    Let $\v{\rho}^*=(\v{p}^*, \beta^*)\in\R^{n+1}$ be a solution of problem \eqref{eq:my_LP_explicit}, with $c_\beta >\norm{\v{g}}_2$. Then:
    \begin{enumerate}        
        \item If $\v{x}$ is Pareto critical for the functions $f_1,\ldots ,f_m$, then $\beta^*=0$.

        \item Let $\v{x}$ be Pareto critical for the functions $f_1,\ldots ,f_m$. Let $V^*$ be the solution set, let $P_i$ be the set of non-ascent directions of $f_i$ in $\v{x}$ and let $P$ be the intersection of all the sets $P_i$ (see \Cref{def:nonascent}). Let us denote by $\Pi$ the set of all the feasible directions in $P$, according to the constraints of the problem. Then, it holds:
        \begin{enumerate}
            \item $V^*=\Pi$ if and only if $\v{g}^T\v{p} = 0$ for each $\v{p}\in\Pi$
            \item $V^*=\{\v{0}\}$ if and only if $\Pi = \{\v{0}\}$;
            \item $\v{0}\notin V^*$ if and only if there is $\v{p}\in\Pi$ such that $\v{g}^T\v{p}\neq 0$;
        \end{enumerate}
        
        \item If $\v{x}$ is not Pareto critical for the functions $f_1,\ldots ,f_m$, then $\beta^* < 0$ and $\v{p}^*$ is a descent direction for all the functions $f_1,\ldots ,f_m$ in $\v{x}$.

        \item If $\v{x}$ is not Pareto critical for the functions $f_1,\ldots ,f_m$, then:
        \begin{enumerate}
            \item $\beta^*=\max_{i=1,\ldots ,m}\widebar{\v{g}}_i^T\v{p}^*$;
            \item For each $i = 1,\ldots ,m$, it holds
            \begin{equation}\label{eq:pstar_mindist_hyperplanes}
                {\rm dist}\left( H_i, \v{p}^*\right) \geq |\beta^*|\,;
            \end{equation} 
        \end{enumerate}      
    \end{enumerate}
\end{lem}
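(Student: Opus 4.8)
The plan is to treat the statement one item at a time, exploiting throughout that the null vector $\v{\rho}=(\v{0},0)$ is always feasible with objective value $0$, so that the optimal value is at most $0$ and $\beta^*\le 0$ by the constraint $\rho_{n+1}\le 0$. I would also record the elementary fact that, since $\widebar{\v{g}}_i$ is a positive multiple of $\v{g}_i$, the scalars $\widebar{\v{g}}_i^T\v{p}$ and $\v{g}_i^T\v{p}$ always share sign, so $\v{p}$ is a shared descent direction if and only if $\widebar{\v{g}}_i^T\v{p}<0$ for every $i$. Item~1 is then immediate: if $\v{x}$ is Pareto critical and one had $\beta^*<0$, the constraints $\widebar{\v{g}}_i^T\v{p}^*\le\beta^*<0$ would make $\v{p}^*$ a shared descent direction, contradicting criticality; hence $\beta^*=0$.

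Items 4(a) and 4(b) are short once Item 3 is available. For 4(a) I would use an exchange argument: the constraints give $\max_{i}\widebar{\v{g}}_i^T\v{p}^*\le\beta^*$, and if this were strict I could replace $\beta^*$ by $\beta'=\max_{i}\widebar{\v{g}}_i^T\v{p}^*<\beta^*\le 0$, which keeps feasibility (still $\beta'\le 0$) while lowering the objective by $c_\beta(\beta^*-\beta')>0$ — here only $c_\beta>0$ is used — contradicting optimality; hence $\beta^*=\max_{i}\widebar{\v{g}}_i^T\v{p}^*$. For 4(b) I would compute the point–hyperplane distance ${\rm dist}(H_i,\v{p}^*)=|\v{g}_i^T\v{p}^*|/\norm{\v{g}_i}_2=|\widebar{\v{g}}_i^T\v{p}^*|$ and combine it with $\widebar{\v{g}}_i^T\v{p}^*\le\beta^*<0$ (Item 3) to get $|\widebar{\v{g}}_i^T\v{p}^*|\ge|\beta^*|$; this is precisely the point at which normalising the gradients pays off.

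Item 3 is the crux, and it is here that \Cref{prop:cbeta_lbound_0} becomes indispensable. Assuming $\v{x}$ is not Pareto critical, fix a shared descent direction $\v{q}$ with $\widebar{\v{g}}_i^T\v{q}<0$ for all $i$. I would argue by contradiction: suppose an optimal $\v{\rho}^*$ had $\beta^*=0$, so that $\widebar{\v{g}}_i^T\v{p}^*\le 0$ for every $i$. Perturbing $\v{p}^*\to\v{p}^*+t\v{q}$ for small $t>0$ gives $\widebar{\v{g}}_i^T(\v{p}^*+t\v{q})\le t\,\widebar{\v{g}}_i^T\v{q}<0$, so the new value $\beta_1:=\max_{i}\widebar{\v{g}}_i^T(\v{p}^*+t\v{q})$ is strictly negative, and the change in objective is $t\,\v{g}^T\v{q}+c_\beta\beta_1<0$ because $\v{g}^T\v{q}=\sum_i\norm{\v{g}_i}_2\,\widebar{\v{g}}_i^T\v{q}<0$. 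This contradicts optimality and forces $\beta^*<0$; then $\widebar{\v{g}}_i^T\v{p}^*\le\beta^*<0$ shows $\v{p}^*$ is a shared descent direction. The delicate point — and the main obstacle — is keeping the perturbation admissible with respect to the box $\norm{\v{p}}_\infty\le\gamma$ when $\v{p}^*$ lies on its boundary; this is exactly what \Cref{prop:cbeta_lbound_0} is meant to control, as it certifies a strict objective decrease for a comparison point with strictly smaller $\beta$ whose displacement in $\v{p}$ is tied to the decrease of $\beta$, letting one choose the step length to respect both the box and that distance budget.

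Finally, Item 2 follows by specialising to the Pareto critical case, where $\beta^*=0$ (Item 1) together with the exchange argument of 4(a) forces $\max_{i}\widebar{\v{g}}_i^T\v{p}^*=0$. Consequently the optimisers are exactly the minimisers of the linear functional $\v{p}\mapsto\v{g}^T\v{p}$ over $\Pi$, i.e. $V^*=\argmin_{\v{p}\in\Pi}\v{g}^T\v{p}$, and on $\Pi$ one has $\v{g}^T\v{p}=\sum_i\norm{\v{g}_i}_2\,\widebar{\v{g}}_i^T\v{p}\le 0$ with $\v{0}\in\Pi$ giving value $0$. From these two facts the three equivalences become elementary: $V^*=\Pi$ holds iff $\v{g}^T\v{p}$ is constant on $\Pi$, necessarily equal to $0$, which is (a); $\v{0}\in V^*$ holds iff $\v{g}^T\v{p}\ge 0$ on $\Pi$, i.e. iff $\v{g}^T\v{p}\equiv 0$ on $\Pi$, whose negation is (c); and (b) follows by combining these, since if $\Pi\neq\{\v{0}\}$ any nonzero $\v{p}\in\Pi$ either beats $\v{0}$ (contradicting $\v{0}\in V^*$) or ties it and hence lies in $V^*$ (contradicting $V^*=\{\v{0}\}$), while the converse is trivial.
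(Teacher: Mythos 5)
Your items 1, 2 and 4 are correct and essentially coincide with the paper's arguments: item 1 is the same criticality contradiction, and 4(a)/4(b) are the same exchange and point--hyperplane computations. Your item 2 is in fact cleaner than the paper's: observing that Pareto criticality forces $\max_i\widebar{\v{g}}_i^T\v{p}=0$ for every $\v{p}\in\Pi$, so that the LP collapses to minimizing $\v{g}^T\v{p}$ over $\Pi$ and $V^*=\argmin_{\v{p}\in\Pi}\v{g}^T\v{p}$, gives all three equivalences at once; in particular you prove both directions of 2.1, where the paper only argues the ``if'' direction.

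The genuine gap is in item 3, exactly at the step you flag as delicate, and \Cref{prop:cbeta_lbound_0} cannot close it. Your perturbation $\v{p}^*\mapsto\v{p}^*+t\v{q}$ is fine when $\v{p}^*$ is interior to the box, but when $\v{p}^*$ lies on $\norm{\v{p}}_\infty=\gamma$ and every shared descent direction points out of the box, there is no admissible step at all, so no ``choice of step length'' can help. Deferring to \Cref{prop:cbeta_lbound_0} does not work for two reasons. First, that proposition is false as stated: its proof tacitly assumes $\cos\alpha\geq 0$, since multiplying $\norm{\v{p}_0-\v{p}_1}_2\geq\Delta\beta$ by a negative $\cos\alpha$ reverses the inequality; concretely, with $\v{g}=(1,0)$, $c_\beta=2$, $\v{p}_0=\v{0}$, $\beta_0=0$, $\v{p}_1=(10,0)$, $\beta_1=-1$ one gets $\v{c}^T\v{\rho}_0-\v{c}^T\v{\rho}_1=-8<0$ although all hypotheses hold. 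The version that is true requires ${\rm dist}(\v{p}_0,\v{p}_1)\leq\beta_0-\beta_1$, which is precisely what a box-blocked perturbation cannot satisfy. (The paper's own proof of item 3 invokes the proposition with the distance inequality in the failing direction, so it has the same hole.)

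Moreover the hole is not repairable under the stated hypothesis $c_\beta>\norm{\v{g}}_2$, because item 3 is then false. Take $n=m=2$ with $\v{g}_1=\tfrac{0.1}{\sqrt{2}}(1,1)$ and $\v{g}_2=(0,-1)$ (realizable, e.g., by linear objectives), so $\v{g}=\bigl(\tfrac{0.1}{\sqrt{2}},\,\tfrac{0.1}{\sqrt{2}}-1\bigr)$, $\norm{\v{g}}_2\approx 0.932$, $\gamma=1$, and choose $c_\beta=1>\norm{\v{g}}_2$. The point is not Pareto critical ($\v{q}=(-2,1)$ satisfies $\widebar{G}\v{q}<\v{0}$). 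The feasible directions form the triangle with vertices $(0,0)$, $(-1,0)$, $(-1,1)$; on the piece where $\widebar{\v{g}}_1$ attains the max, the reduced objective $(\v{g}+c_\beta\widebar{\v{g}}_1)^T\v{p}\approx 0.778\,p_1-0.222\,p_2$ is minimized at the box corner $(-1,1)$ with value $-1$, while the minimum on the other piece is $\approx -0.87$. Hence the unique solution of \eqref{eq:my_LP_explicit} is $(\v{p}^*,\beta^*)=((-1,1),0)$: $\beta^*=0$ and $\v{p}^*\perp\v{g}_1$, not a shared descent direction. Structurally, every shared descent direction here has $q_1<-q_2<0$, so at $\v{p}^*$ it is blocked by the box face $p_1=-\gamma$ --- exactly the obstruction your argument (and the paper's) cannot overcome. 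A correct statement would need a lower bound on $c_\beta$ that depends on the geometry of the gradients, not merely $c_\beta>\norm{\v{g}}_2$.
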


\begin{proof}
    In the following, we prove one by one the results listed in the Lemma.
    
    \begin{enumerate}
        \item The proof is almost straightforward. If $\v{x}$ is Pareto critical, no negative values of $\beta$ permit to satisfy the inequality constraints (see \eqref{eq:criticalPareto}); then, $\beta^*=0$. 

        \item If $\v{x}$ is Pareto critical, no descent directions are shared by the $m$ objective functions. Then, solutions are contained in the non-ascent directions' region; i.e., $V^*\subseteq \Pi$. 
        
        Let us start with item 2.1 and let us assume that $\v{g}^T \v{p} = 0$, for each $\v{p}\in \Pi$. Since $V^*\subseteq \Pi$, we have that $\v{g}^T \v{p}^*=0$ for each solution of the problem. But any vector in $\Pi$ satisfies the constraints and has the same minimum value for the objective function (i.e., value $0$). Then, $\Pi \subseteq V^*$ and we prove the thesis. 
        
        Concerning the proof of item 2.2, first of all, we observe that for item 2.1., $V^*=\{\v{0}\}$ if $\Pi=\{\v{0}\}$. On the other hand, we prove by contradiction that $\Pi=\{\v{0}\}$ if $V^*=\{\v{0}\}$. Let us assume that $\Pi\neq\{\v{0}\}$ and $V^*=\{\v{0}\}$; then, there is a feasible direction $\v{p}\in\Pi$, $\v{p}\neq \v{0}$, such that $\v{g}^T\v{p}\leq 0 = \v{g}^T\v{0}$. Therefore, there are two possible situations: $i$) $\v{g}^T\v{p}< 0$ and $\v{0}$ is not a solution; $ii$)  $\v{g}^T\v{p}= 0$ and $\v{0}$ is not the unique element of $V^*$. Both cases are a contradiction of the hypothesis $V^*=\{\v{0}\}$, proving the thesis.

        Moving to item 2.3, also in this case we use the proof by contradiction. Let the null vector be a solution and let $\v{p}\in \Pi$ be such that $\v{g}^T\v{p}\neq 0$. Then, $\v{g}_i^T\v{p}\leq 0$, for each $i=1,\ldots, m$, and there is $j\in\{1,\ldots ,m\}$ such that $\v{g}_j^T\v{p}< 0$. By consequence, $\v{g}^T\v{p}< 0 = \v{g}^T\v{0}$; therefore, $\v{0}$ is not a solution of problem \eqref{eq:my_LP_explicit}, which is a contradiction; therefore $\v{0}$ is not a solution if there is $\v{p}\in\Pi$ such that $\v{g}^T\v{p}\neq 0$. On the other hand, again by contradiction, let us assume that $\v{0}\notin V^*$ and that there is no $\v{p}\in\Pi$ such that $\v{g}^T\v{p}\neq 0$; therefore, it holds $\v{g}^T\v{p}=0$ for each $\v{p}\in\Pi$ and, for item 2.1, $\Pi=V^*\ni\v{0}$, that is a contradiction. Therefore, there is at least one $\v{p}\in\Pi$ such that $\v{g}^T\v{p}\neq 0$ if $\v{0}\notin V^*$.
        
        \item The proof of this item is done by contradiction. Let us assume that \eqref{eq:my_LP_explicit} has solution $\v{\rho}^*=(\v{p}^*, \beta^*)$ with $\beta^*=0$. Since $\v{x}$ is not Pareto critical, there is $\widehat{\v{p}}\in\R^n$ such that $\norm{\widehat{\v{p}}}_\infty \leq \gamma$, $\widehat{\v{p}}\neq\v{p}^*$, and $\widebar{G}\widehat{\v{p}} < 0$.
        Let $\delta$ denotes the euclidean distance between $\v{p}^*$ and $\widehat{\v{p}}$ and let $\widehat{\beta}$ be such that
        \begin{equation}\label{eq:betahat_lem1}
            \widehat{\beta}:= \max \left\lbrace -\delta, \max_{i=1,\ldots ,m}\widebar{\v{g}}_i^T\widehat{\v{p}} \right\rbrace < 0\,.
        \end{equation}
        Then, $\widehat{\v{\rho}}:=(\widehat{\v{p}}, \widehat{\beta})$ is a feasible point of the problem, $\widehat{\beta}<\beta^*=0$, ${\rm dist}(\v{p}^*,\widehat{\v{p}})=\delta\geq (\beta^* - \widehat{\beta})$, and $c_\beta > \norm{\v{g}}_2$. Therefore, for \Cref{prop:cbeta_lbound_0}, it holds
        \begin{equation}\label{eq:ineq_crho}
            \v{c}^T \v{\rho}^* > \v{c}^T \widehat{\v{\rho}}\,,
        \end{equation}
        which is a contradiction of the hypothesis that $\v{\rho}^*$ is solution of \eqref{eq:my_LP_explicit}.

        \item Let $\v{\rho}^*$ be a solution for the problem \eqref{eq:my_LP_explicit}. Then, $\beta^*<0$ and $\v{\rho^*}$ satisfies \eqref{eq:my_grad_ineq}. It is easy to prove by contradiction that $\beta^*=\max_{i=1,\ldots ,m}\widebar{\v{g}}_i^T\v{p}^*=:\delta_{\max}^*$ (item 4.1); Indeed, assuming $\beta^*>\delta_{\max}^*$, it is possible to find a feasible vector $(\v{p}^*, \delta_{\max}^*)$ with lower value of the objective function.

        Concerning item 4.2, as previously observed, the distance of $\v{p}^*$ from $H_i$ is lower bounded by $|\beta^*|$, for each $i=1,\ldots ,m$, because $\v{\rho}^*$ satisfies \eqref{eq:my_grad_ineq}; then, \eqref{eq:pstar_mindist_hyperplanes} holds. 
        
    \end{enumerate}

\end{proof}


Now we point the attention of the readers to item 2 of \Cref{lem:my_LP}. The three sub-items correspond to the three possible situations that occur for a Pareto critical point $\v{x}\in\R^n$; specifically, we have:
\begin{itemize}
    \item \Cref{lem:my_LP} - item 2.2 ($V^*=\Pi=\{\v{0}\}$, see \Cref{fig:comparison_lemmacases}a): the critical point is such that for each direction $\v{p}\in\R^n$, $\v{p}$ is an ascent direction for at least one objective function; i.e., there is $j\in\{1,\ldots ,m\}$ such that $\v{g}_j^T\v{p} > 0$. This is a particular case of item 2.1 of the Lemma (see below).
    
    \item \Cref{lem:my_LP} - item 2.1 ($\v{g}^T\v{p}=0$ for each $\v{p}\in\Pi$, $V^*=\Pi$, see \Cref{fig:comparison_lemmacases}b): the critical point is such that: $i$) all the non-ascent directions shared by all the objective functions are perpendicular to all the gradients; $ii$) the case 2.2 of the Lemma (see above).
    
    \item \Cref{lem:my_LP} - item 2.3 (there is $\v{p}\in\Pi$ such that $\v{g}^T\v{p}\neq 0$, $\v{0}\notin V^*$, see \Cref{fig:comparison_lemmacases}c): the critical point is such there is $j\in\{1,\ldots ,m\}$ and there is a non-ascent direction shared by all the objectives such that it is a descent direction for the objective function $f_j$.   
\end{itemize}

\begin{figure}[htb!]
    \centering
    \subcaptionbox{$V^*=\Pi=\{\v{0}\}$.}{\includegraphics[trim={1cm 1cm 1cm 1cm},clip,width=0.30\textwidth]{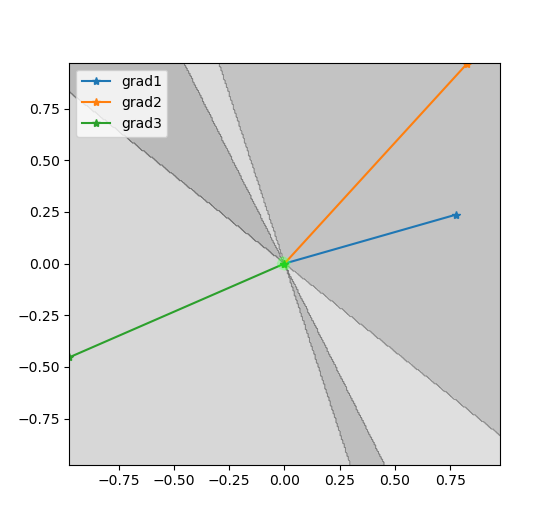}}
    \subcaptionbox{$V^*=\Pi$.}{\includegraphics[trim={1cm 1cm 1cm 1cm},clip,width=0.30\textwidth]{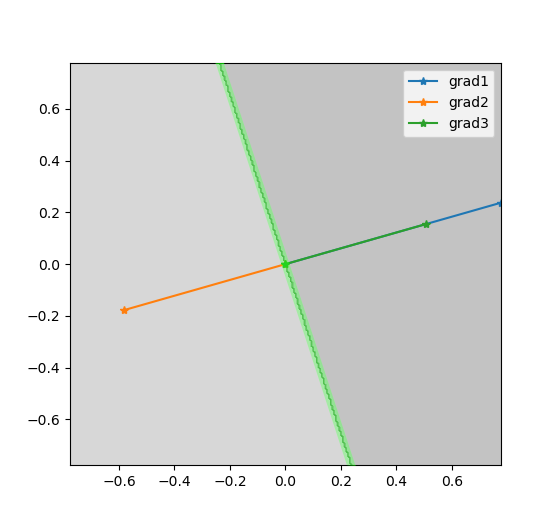}}
    \subcaptionbox{$\v{0}\notin V^*$.}{\includegraphics[trim={1cm 1cm 1cm 1cm},clip,width=0.30\textwidth]{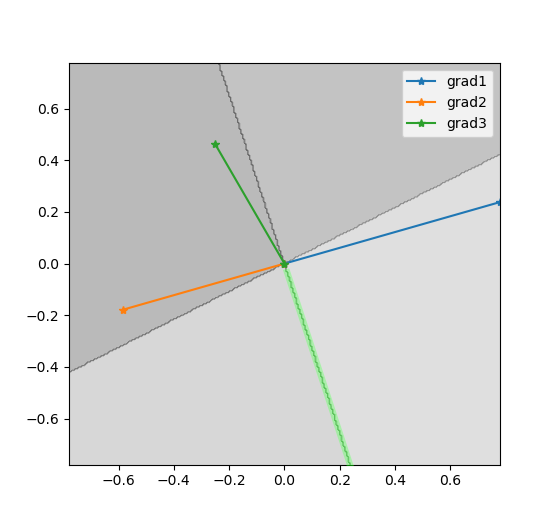}}
    \caption{Visual example in $\R^2$, $m=3$ objectives, of the three possible situations that occur for a Pareto critical point in \eqref{eq:my_LP_explicit}. Grey color denotes ascent regions for the objective functions (there are no shared descent directions because $\v{x}$ is Pareto critical); the darker the grey color, the more its objectives consider it as an ascent direction. Dark grey lines denote the regions of directions perpendicular to the gradients. We highlight in green color the solution set $V^*$ of the LP problem.}
    \label{fig:comparison_lemmacases}
\end{figure}

One of the main differences between \eqref{eq:fliege2000} and \eqref{eq:my_LP_explicit} is that for \eqref{eq:fliege2000} we have that $\v{0}\in V^*$ for all the three situations listed above and illustrated in \Cref{fig:comparison_lemmacases} (see \Cref{lem:fliege2000}). Therefore, \eqref{eq:fliege2000} can return anytime a null direction $\v{p}^*=\v{0}$ if $\v{x}$ is Pareto critical, stopping the optimization procedure as a consequence. On the contrary, the new LP problem \eqref{eq:my_LP_explicit} returns a null direction only if there are no directions in $\Pi$ that are descent directions for at least one objective; therefore, if there is a direction in $\Pi$ that is a descent direction for at least one objective, \eqref{eq:my_LP_explicit} returns a non-null solution $\v{p}^*$ and the optimization procedure \eqref{eq:iterative_descent} can continue looking for point $\v{x}^{(k+1)}$ along the direction $\v{p}^*$ such that $\v{x}^{(k+1)}$ is non-dominated by $\v{x}^{(k)}=\v{x}$. Moreover, it is possible to find $\v{x}^{(k+1)}$ with decreased values for the objective functions $f_{j_1},\ldots f_{j_M}$ for which it holds $\v{g}_j^T\v{p}^*<0$, $j=j_1,\ldots ,j_M$. In this situation, in order to find such a kind of new point for the sequence, we define a new backtracking strategy in the next section. Coupling \eqref{eq:my_LP_explicit} with this new backtracking strategy, we are able to increase the possibility of detecting and/or exploring the Pareto set of a MOO problem, as illustrated by the results of the numerical experiments of \Cref{sec:experiments}.

\begin{rem}[Case of two objective functions]\label{rem:case2objs}
    We observe that if $m=2$, i.e., the MOO problem is characterized by two objective functions only, the case $\v{0}\notin V^*$ is not possible for $\v{x}\in\R^n$ Pareto critical. Indeed, only the cases corresponding to items 2.1 and 2.2 of \Cref{lem:my_LP} are possible. Therefore, when $m=2$, the differences between \eqref{eq:fliege2000} and \eqref{eq:my_LP_explicit} are in the orientation and norm of the non-null solutions $\v{p}^*$.
\end{rem}

\section{New Backtracking Strategy and Convergence Analysis}\label{sec:mgd}

In this section, we describe how to better exploit a MGD method \eqref{eq:iterative_descent} that is based on descent directions evaluated with \eqref{eq:my_LP_explicit}. In particular, we use a new line-search method for the step lengths $\{\eta_k\}_{k\in\N}$, in order to guarantee that the sequence $\{\v{x}^{(k)}\}_{k\in\N}$ is such that $\v{x}^{(k+1)}$ is non-dominated by $\v{x}^{(k)}$, for each $k\in\N$. Therefore, we observe that the main difference between our line-search method and the ones typically adopted in literature (e.g., see \cite{Fliege2000,Desideri2009,Desideri2012,Desideri2012_MGDA2,Peitz2018}) is in the nature of the sequence $\{\v{x}^{(k)}\}_{k\in\N}$: all the other methods look for a sequence strictly decreasing for all the objective functions; on the contrary, we relax this requirement, accepting in general vectors $\v{x}^{(k+1)}$ that are non-dominated by $\v{x}^{(k)}$ (but giving priority to the ones that decrease the values of all the objectives).

One of the simplest but most effective implementations of line-search methods for one-objective optimization problems is the backtracking strategy with respect to the Armijo condition \cite{Armijo1966,Wolfe1969,Wolfe1971,Nocedal2012}. This condition can be easily extended to MOO for building the sequence $\{\v{x}^{(k)}\}_{k\in\N}$ strictly decreasing with respect to all the objective functions \cite{Fliege2000,Desideri2009,Desideri2012,Desideri2012_MGDA2,Peitz2018}. Then, in this work, we extend even further this approach for modifying the properties of the sequence $\{\v{x}^{(k)}\}_{k\in\N}$ as stated above.

Let $\alpha\in (0,1)$ be the factor of the backtracking strategy and let 
$c_1\in (0,1)$ be the parameter of the Armijo condition. Then, at each step $k>0$, we check if the Armijo condition 
\begin{equation}\label{eq:armijo_moo_0}
    f_i(\v{x}^{(k)} + \eta^{(k)}_0\v{p}^{*(k)}) \leq f_i(\v{x}^{(k)}) + c_1 \eta^{(k)}_0 \v{g}_i(\v{x}^{(k)})^T\v{p}^{*(k)}\,,
\end{equation}
is satisfied for the base-value of the step-size $\eta^{(k)}=\eta^{(k)}_0$ and for each $i=1,\ldots ,m$. If it is not satisfied, we decrease the step-size by the factor $\alpha$ until the condition 
\begin{equation}\tag{$\mathrm{A}_i^t$}\label{eq:armijo_moo_t}
    f_i(\v{x}^{(k)} + \eta^{(k)}_t\v{p}^{*(k)}) \leq f_i(\v{x}^{(k)}) + c_1 \eta^{(k)}_t \v{g}_i(\v{x}^{(k)})^T\v{p}^{*(k)}\,,
\end{equation}
is satisfied for the step-size $\eta^{(k)}_t := \eta^{(k)}_{t-1}\alpha = \eta_0^{(k)}\alpha^t$, $t>0$
, and for each $i=1,\ldots ,m$. 

However, it is possible that $\eta^{(k)}_t$ does not satisfy \eqref{eq:armijo_moo_t}, for each $t\geq 0$; in particular, it happens when $\v{x}^{(k)}$ is Pareto critical and, therefore, $\v{p}^{*\,(k)}$ is not a descent direction for all the objectives, but only a non-ascent direction or the null vector (see \Cref{lem:my_LP}). Under these circumstances, instead of stopping the sequence like the other MOO methods (e.g., see \cite{Fliege2000}), if $\v{p}^{*(k)}\neq\v{0}$ we make a last check:
\begin{equation}\label{eq:nodom_check_moo_bcktrck}
    \exists \ i\in\{1\, ,\ldots \, ,m\} \quad s.t. \quad f_i(\v{x}^{(k)}) > f_i(\v{x}^{(k)} + \widehat{\eta}\,\v{p}^{*(k)})
\end{equation}
where $\widehat{\eta}>0$ is a fixed arbitrary small parameter. Therefore, if \eqref{eq:nodom_check_moo_bcktrck} is satisfied, setting $\v{x}^{(k+1)}:=\v{x}^{(k)} + \widehat{\eta}\, \v{p}^{*(k)}$ returns a new point $\v{x}^{(k+1)}$ for the sequence that is non-dominated by $\v{x}^{(k)}$, and the sequence can continue.

Then, we can summarize the sequence generated by the line search method described above as
\begin{equation}\tag{$\mathrm{BT}_{\mathrm{new}}$}\label{eq:seq_bcktrck_theory}
    \begin{cases}
        \v{x}^{(0)}\in\R^n\,, \ \{\eta_0^{(k)}\}_{k\in\N}\subset\R^+\,, \ \widehat{\eta}>0 & \text{ given}\\
        \v{x}^{(k+1)}=\v{x}^{(k)}+\eta^{(k)}\v{p}^{*(k)}\,, \quad &k\geq 0
    \end{cases}
    \,,
\end{equation}
where 
\begin{equation}\label{eq:etak_bcktrck_theory}
    \eta^{(k)}=
    \begin{cases}
        \eta^{(k)}_{\tau} = \eta_0^{(k)}\alpha^\tau \quad &\text{if }\v{x}^{(k)}\text{ is not Par. crit. ($\tau$, $1^{st}$ value of $t\geq 0$ satisfying \eqref{eq:armijo_moo_t})}\\
        \widehat{\eta} \quad &\text{if }\v{x}^{(k)}\text{ is Par. crit. and does not dominate } \v{x}^{(k)}+\widehat{\eta} \ \v{p}^{*(k)}\\
        0 \quad & \text{otherwise (i.e., $\v{x}^{(k)}$ is Par. crit. and dominates $\v{x}^{(k)}+\widehat{\eta} \ \v{p}^{*(k)}$)}
    \end{cases}
    \,.
\end{equation}

The new backtracking strategy can be used in general with any MGD method, independently of the type of sub-problems used to compute the directions $\v{p}^{*\, (k)}$; therefore, it can be used also with a MGD method based on \eqref{eq:fliege2000}. Nonetheless, we recall that \eqref{eq:seq_bcktrck_theory} has been specifically designed to take advantage of the properties of \eqref{eq:my_LP_explicit}; in particular, we recall the observations related to item 2.3 of \Cref{lem:my_LP}.

\begin{rem}[Strictly decreasing backtracking as a special case of the new one]\label{rem:nondom_restart_and_speccase_old}
    Line search methods designed for building strictly decreasing sequences for all the objectives can be interpreted as a special case of the new type of backtracking strategies, where $\widehat{\eta}=0$.
\end{rem}

\subsection{Convergence Results}\label{sec:convergence}

If we look at all the possible sequences $\{\v{x}^{(k)}\}_{k\in\N}$ determined by the new backtracking strategy \eqref{eq:seq_bcktrck_theory}, we can identify three main types of sub-sequences of consecutive elements:
\begin{enumerate}
    \item $\{\v{x}^{(k+m)}\}_{m=0}^{M}$, $k, M\geq 0$, such that $\v{x}^{(k+M)}$ is Pareto critical, $\eta^{(k+M)}=\widehat{\eta}$, and $\v{x}^{(k+m)}$ is not Pareto critical for each $m < M$. 
    
    We define this type of sub-sequence as ${\widehat{\eta}}$\emph{-Pareto Critical end} ($\mathrm{PC}_{\widehat{\eta}}$);
    
    \item $\{\v{x}^{(k+m)}\}_{m\in\N}$, $k\geq 0$, such that there is $M\in\N$ for which $\v{x}^{(k+m)}$ is Pareto critical and $\eta^{(k+m)}=0$, for each $m\geq M$, and $\v{x}^{(k+m)}$ is not Pareto critical for each $m < M$.

    We define this type of sub-sequence as $0$\emph{-Pareto Critical end} ($\mathrm{PC}_0$);
    
    \item $\{\v{x}^{(k+m)}\}_{m\in\N}$, $k\geq 0$, such that $\v{x}^{(k+m)}$ is not Pareto critical for each $m \geq 0$.

    We define this type of sub-sequence as \emph{non-Pareto Critical} (nPC);
\end{enumerate}

\noindent More precisely, we have that any sequence $\{\v{x}^{(k)}\}_{k\in\N}$ has one of the following structures:
\begin{enumerate}
    \item $N\in\N$ consecutive PC$_{\widehat{\eta}}$ sub-sequences, followed by one PC$_0$ sub-sequence;
    
    \item $N\in\N$ consecutive PC$_{\widehat{\eta}}$ sub-sequences, followed by one nPC sub-sequence;
    
    \item Infinite consecutive PC$_{\widehat{\eta}}$ sub-sequences.
\end{enumerate}

On the other hand, a sequence generated for being strictly decreasing for all the objectives coincides with a sub-sequence of type nPC or, if it reaches a Pareto critical point, with a sub-sequence of type PC$_0$.
In \cite{Fliege2000}, the authors state that when such a kind of sequences are of the nPC type, assuming proper limitation hypotheses on the objective functions, they have at least one accumulation point, and each accumulation point of the sequence is a Pareto critical point.
Below, we report this theorem (\cite[Theorem 1]{Fliege2000}), adapted to the notation used in this work.

\begin{teo}[Theorem 1 - \cite{Fliege2000}]\label{teo:convergence_fliege}
    Let $\{\v{x}^{(k)}\}_{k\in\N}$ be a sequence defined by \eqref{eq:seq_bcktrck_theory}, but with $\widehat{\eta}=0$ (see \Cref{rem:nondom_restart_and_speccase_old}). Let $\v{x}^{(k)}$ be not Pareto critical, for each $k\in\N$. Then:
    \begin{enumerate}
        \item $\{\v{x}^{(k)}\}_{k\in\N}$ stays bounded and has at least one accumulation point, if the function $\v{f}$ (see \Cref{def:Pareto_definitions}) has bounded level sets in the sense that 
        $\{\v{x}\in\R^n \ | \ \v{f}(\v{x})\leq \v{f}(\v{x}^{(0)})\}$
        is bounded;
        \item Every accumulation point of the sequence $\{\v{x}^{(k)}\}_{k\in\N}$ is a Pareto critical point.
    \end{enumerate}
\end{teo}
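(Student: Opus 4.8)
The plan is to treat the two claims separately and with different techniques. Claim~1 follows directly from the monotone decrease enforced by the Armijo rule (which, with $\widehat{\eta}=0$, is the only admissible step type here since every $\v{x}^{(k)}$ is non--critical) together with the bounded--level--set hypothesis. Claim~2 I would prove by contradiction: assuming an accumulation point $\v{x}^*$ that is \emph{not} Pareto critical, I would combine the vanishing of the per--step decrease with continuity of the optimal LP value to force the step sizes to zero along a subsequence, and then use a first--order expansion to reach a contradiction.

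For Claim~1, first I would observe that, since $\v{x}^{(k)}$ is never Pareto critical, the descent property of the computed direction (\Cref{lem:fliege2000}, item~2, equivalently \Cref{lem:my_LP}, item~3) gives $\v{g}_i(\v{x}^{(k)})^T\v{p}^{*(k)}<0$ for every $i$ and $k$. With $\widehat{\eta}=0$ the accepted step satisfies the Armijo inequality \eqref{eq:armijo_moo_t}, whence $f_i(\v{x}^{(k+1)})<f_i(\v{x}^{(k)})$ for each $i$. Therefore $\v{f}(\v{x}^{(k)})\leq\v{f}(\v{x}^{(0)})$ for all $k$, so the entire sequence lies in the level set $\{\v{x}\in\R^n\mid\v{f}(\v{x})\leq\v{f}(\v{x}^{(0)})\}$, which is bounded by assumption; Bolzano--Weierstrass then yields at least one accumulation point.

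For Claim~2, let $\v{x}^{(k_j)}\to\v{x}^*$. Each scalar sequence $\{f_i(\v{x}^{(k)})\}_k$ is monotone decreasing and has a convergent subsequence, hence converges, so $f_i(\v{x}^{(k)})-f_i(\v{x}^{(k+1)})\to 0$. Plugging the Armijo inequality and the bound $\v{g}_i(\v{x}^{(k)})^T\v{p}^{*(k)}\leq\beta^{*(k)}<0$ into this gives $\eta^{(k)}|\beta^{*(k)}|\to 0$. Introducing $\theta(\v{x}):=\beta^*(\v{x})$, the optimal value of the LP viewed as a function of $\v{x}$, and using its continuity, one has $\theta(\v{x}^{(k_j)})\to\theta(\v{x}^*)$; if $\v{x}^*$ were not Pareto critical then $\theta(\v{x}^*)<0$, forcing $\eta^{(k_j)}\to 0$. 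A vanishing accepted step (with initial steps bounded away from zero) implies that the previous trial step $\eta^{(k_j)}/\alpha$ violated \eqref{eq:armijo_moo_t} for some objective index $i_j$; a mean--value expansion of $f_{i_j}$ along $\v{p}^{*(k_j)}$ converts this violation into $\v{g}_{i_j}(\v{\xi}^{(k_j)})^T\v{p}^{*(k_j)}>c_1\,\v{g}_{i_j}(\v{x}^{(k_j)})^T\v{p}^{*(k_j)}$ at an intermediate point $\v{\xi}^{(k_j)}$. Passing to a further subsequence along which $i_j\equiv i$ is constant and $\v{p}^{*(k_j)}\to\v{p}^*$ (possible since the index set is finite and the directions are bounded by the norm constraint), and letting $j\to\infty$ so that $\v{\xi}^{(k_j)}\to\v{x}^*$, continuity of the gradients yields $(1-c_1)\,\v{g}_i(\v{x}^*)^T\v{p}^*\geq 0$, i.e. $\v{g}_i(\v{x}^*)^T\v{p}^*\geq 0$; but the same limit gives $\v{g}_i(\v{x}^*)^T\v{p}^*\leq\theta(\v{x}^*)<0$, a contradiction.

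The hard part will be the two limiting ingredients. The first is establishing continuity of the parametric--LP value $\theta$, whose objective and constraint data depend on $\v{x}$ through the gradients; this (an auxiliary lemma in Fliege's treatment, provable from compactness of the feasible set and continuity of the $\v{g}_i$) is precisely what transfers strict negativity at $\v{x}^*$ to the tail of the subsequence. The second is the simultaneous limit passage in the mean--value step: one must extract a single subsequence along which the violating index $i_j$ stabilizes and the bounded directions $\v{p}^{*(k_j)}$ converge, so that continuity of $\nabla f_{i}$ applies. The remaining ingredients (monotone convergence, Bolzano--Weierstrass, and the first--order expansion) are routine.
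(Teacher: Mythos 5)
This theorem is imported verbatim from \cite{Fliege2000} and the paper itself gives no proof of it, so there is no internal proof to compare against: your reconstruction is correct and is essentially the original Fliege--Svaiter argument (Armijo monotonicity plus the bounded-level-set hypothesis and Bolzano--Weierstrass for item 1; continuity of the LP optimal value, forced vanishing of the accepted steps, and a mean-value/limit passage at the last rejected trial step for item 2). The two ingredients you flag as delicate are the right ones and both go through: $\theta(\v{x})=\min_{\norm{\v{p}}_\infty\leq 1}\max_{i}\v{g}_i(\v{x})^T\v{p}$ is continuous since the feasible set is a fixed compact set and the objective depends continuously on $(\v{x},\v{p})$, while your argument implicitly requires the $f_i$ to be \emph{continuously} differentiable (as assumed in \cite{Fliege2000}, though this paper's statement says only ``differentiable'') and the initial step sizes $\eta_0^{(k)}$ to be bounded away from zero (as in \Cref{alg:mgd_classic}, where $\eta_0$ is a fixed constant).
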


Now, we state a theorem for characterizing the convergence properties of the sequences obtained using the new backtracking strategy proposed; i.e., sequences determined by \eqref{eq:seq_bcktrck_theory}. In this theorem, we exclude the case of a sequence ending with a PC$_0$ sub-sequence because, in that case, the results are trivial.

\begin{teo}\label{teo:my_conv}
    Let $\{\v{x}^{(k)}\}_{k\in\N}$ be a sequence defined by \eqref{eq:seq_bcktrck_theory} and let $\mathcal{C}$ denotes the set of all the Pareto critical points of $\v{f}:\R^m\rightarrow \R^n$ (see \Cref{def:Pareto_definitions}). Let us assume the sequence does not end with a PC$_0$ sub-sequence. Then:
    \begin{enumerate}
        \item $\liminf_{k\rightarrow +\infty} \mathrm{dist}(\v{x}^{(k)}, \mathcal{C}) = 0$;
        
        \item if $\{\v{x}^{(k)}\}_{k\in\N}$ ends with a sub-sequence $\{\v{x}^{(k_0+m)}\}_{m\in\N}$ of nPC type, $k_0\geq 0$ fixed, and $\{\v{x}\in\R^n \ | \ \v{f}(\v{x})\leq \v{f}(\v{x}^{(k_0)})\}$ is bounded, then $\{\v{x}^{(k_0+m)}\}_{m\in\N}$ is bounded, admits at least one accumulation point and all its accumulation points are Pareto critical;
    \end{enumerate}
\end{teo}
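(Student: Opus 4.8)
The plan is to exploit the structural dichotomy established immediately before the statement: since $\{\v{x}^{(k)}\}_{k\in\N}$ is generated by \eqref{eq:seq_bcktrck_theory} and does not end with a PC$_0$ sub-sequence, it is of one of only two types — (i) infinitely many consecutive PC$_{\widehat{\eta}}$ sub-sequences, or (ii) finitely many PC$_{\widehat{\eta}}$ sub-sequences followed by a single nPC tail $\{\v{x}^{(k_0+m)}\}_{m\in\N}$. I would prove both items by treating these two cases separately, observing that case (ii) is exactly where the classical strictly-decreasing theory of \Cref{teo:convergence_fliege} applies, via \Cref{rem:nondom_restart_and_speccase_old} and the definition \eqref{eq:etak_bcktrck_theory}.

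For item 1 in case (i) the argument is immediate: by definition every PC$_{\widehat{\eta}}$ sub-sequence terminates at a Pareto critical point, so there are infinitely many indices $k$ with $\v{x}^{(k)}\in\mathcal{C}$; for those indices $\mathrm{dist}(\v{x}^{(k)},\mathcal{C})=0$, hence $\liminf_{k\to+\infty}\mathrm{dist}(\v{x}^{(k)},\mathcal{C})=0$. In case (ii) the $\liminf$ is governed entirely by the nPC tail, and I would argue by contradiction: if the tail remained uniformly bounded away from $\mathcal{C}$, then a uniform descent lemma (see the obstacle below) would force a fixed positive decrease of the merit quantity $\sum_{i=1}^m f_i$ at every accepted step, which is incompatible with this quantity being monotone and bounded below on the relevant (compact) level set; hence the tail must enter every neighbourhood of $\mathcal{C}$ and $\liminf\mathrm{dist}=0$.

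For item 2 I would first note that, by \Cref{rem:nondom_restart_and_speccase_old} and \eqref{eq:etak_bcktrck_theory}, on every non-Pareto-critical iterate the new backtracking reduces exactly to the classical Armijo backtracking with $\widehat{\eta}=0$; hence the nPC tail $\{\v{x}^{(k_0+m)}\}_{m\in\N}$ coincides with a classical MGD sequence that is strictly decreasing for all objectives and whose directions satisfy item 3 of \Cref{lem:my_LP} (i.e. $\beta^{*(k)}<0$ and $\v{p}^{*(k)}$ is a common descent direction, so each condition \eqref{eq:armijo_moo_t} gives a strict decrease of every $f_i$). Being strictly decreasing for every $f_i$, all tail iterates remain inside $\{\v{x}\in\R^n \mid \v{f}(\v{x})\leq \v{f}(\v{x}^{(k_0)})\}$, which is assumed bounded; I would then apply \Cref{teo:convergence_fliege} verbatim to this re-indexed tail to obtain boundedness, existence of at least one accumulation point, and that every accumulation point is Pareto critical.

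The main obstacle is the nPC branch of item 1, i.e. showing the iterates genuinely approach $\mathcal{C}$. For this I would prove a uniform descent lemma on a compact region $K$ bounded away from $\mathcal{C}$: there the optimal LP value $\beta^*(\v{x})=\max_{i}\widebar{\v{g}}_i(\v{x})^T\v{p}^*(\v{x})$ is strictly negative and, by continuity of $\v{x}\mapsto\beta^*(\v{x})$, bounded above by some $\bar{\beta}<0$; combined with a standard lower bound on the Armijo step length $\eta^{(k)}$ produced by backtracking (from a Lipschitz/descent estimate on the $f_i$ and a uniform bound on $\norm{\v{p}^{*(k)}}$), this yields the fixed positive per-step decrease used above. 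The delicate points are establishing the continuity of $\beta^*(\cdot)$ and the uniform step-length bound, and securing the compactness needed to pass from ``bounded away from $\mathcal{C}$'' to ``$\beta^*$ bounded away from $0$'' — which is precisely why the level-set boundedness hypothesis enters, and why, absent such boundedness (for instance when $\mathcal{C}=\varnothing$ and some $f_i$ is unbounded below), the $\liminf$ conclusion of item 1 should be read together with this compactness.
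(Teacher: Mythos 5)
Your proposal follows the same skeleton as the paper's proof: the same two-case decomposition (infinitely many $\mathrm{PC}_{\widehat{\eta}}$ sub-sequences versus finitely many followed by an nPC tail), the same one-line argument for item 1 in the infinite-$\mathrm{PC}_{\widehat{\eta}}$ case (the sequence contains infinitely many Pareto critical points, so the liminf of distances vanishes), and the same treatment of item 2 (reduce the tail to a classical strictly-decreasing sequence via \Cref{rem:nondom_restart_and_speccase_old} and apply \Cref{teo:convergence_fliege} verbatim). Where you diverge is the nPC branch of item 1. The paper disposes of it in one line as a \emph{consequence} of item 2: once the tail is bounded, admits an accumulation point, and every accumulation point is Pareto critical, some subsequence converges to a point of $\mathcal{C}$, hence $\mathrm{dist}(\v{x}^{(k)},\mathcal{C})\rightarrow 0$ along that subsequence and the liminf is zero. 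Your alternative -- a contradiction argument through a uniform descent lemma requiring continuity of $\v{x}\mapsto\beta^*(\v{x})$, a uniform lower bound on the Armijo step length, and a compactness argument -- is essentially a re-proof of Fliege's Theorem 1 from scratch. The ``delicate points'' you flag there are real obstacles (note in particular that the paper assumes only differentiability of the $f_i$, so continuity of $\beta^*(\cdot)$ is not even available without strengthening the hypotheses to $C^1$, and near points where some $\v{g}_i$ vanishes the normalized constraints in \eqref{eq:my_LP_explicit} are not even defined), but they are entirely avoidable: your own item 2, which you prove by citation rather than from scratch, already hands you item 1 on this branch for free. Replacing that detour with the one-line deduction makes your proof coincide with the paper's.

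One further point in your favor: your closing caveat -- that on the nPC branch item 1 is only obtained under the level-set boundedness hypothesis of item 2 -- is accurate, and the paper's proof has exactly the same implicit dependence, since it too derives item 1 from item 2 in that case.
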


\begin{proof}
    The structure of the sequence can be of only two types:
    \begin{itemize}
        \item $N\in\N$ consecutive PC$_{\widehat{\eta}}$ sub-sequences, followed by one nPC sub-sequence;
    
        \item Infinite consecutive PC$_{\widehat{\eta}}$ sub-sequences.
    \end{itemize}

    In the first case, we have to prove both the theses of the theorem. Nonetheless, using \Cref{teo:convergence_fliege} [Theorem 1 - \cite{Fliege2000}] the proof of item 2 is trivial and, as a consequence, also the proof of item 1.

    Now, only item 1 for a sequence of infinite consecutive PC$_{\widehat{\eta}}$ sub-sequences remains to be proven. Let $\{\v{x}^{(k)}\}_{k\in\N}$ be a a sequence of this type; then, for each $k\geq 0$, there is $h\geq k$ such that $\v{x}^{(h)}$ is Pareto critical and $\eta^{(h)}=\widehat{\eta}$. Therefore, it holds

    $$\liminf_{k\rightarrow +\infty} \ \mathrm{dist}(\v{x}^{(k)},\mathcal{C}) = \sup_{k\geq 0} \lbrace \inf_{h\geq k} \mathrm{dist}(\v{x}^{(h)}, \mathcal{C})\rbrace = 0\,.$$

\end{proof}

\subsection{Implementation}

Here, we report the pseudocode of an algorithm (\Cref{alg:my_mgd}) that implements a MGD procedure with respect to the backtracking strategy defined in this section. We assume a maximum number of iterations $K\in\N$, a maximum number of backtracking steps $\Theta\in\N$, and, for simplicity, we set $\widehat{\eta}=\eta_0 \alpha^{\Theta}$, where $\eta_0\in\R_+$ is the base value of the step-size for each iteration. The direction $\v{p}^*$ described in the algorithm is the vector obtained solving the LP problem \eqref{eq:fliege2000} of \cite{Fliege2000}, the LP problem \eqref{eq:my_LP_explicit} proposed in this work, or any other problem defined with the same scope.

\begin{alg}\label{alg:my_mgd}
    Let us consider the unconstrained MOO problem \eqref{eq:MOOprob}. Then, we define the following MGD algorithm for the implementation of the descent method \eqref{eq:seq_bcktrck_theory}.
    
    \begin{description}
    
    \item[Data:] $\v{x}^{(0)}\in\R^n$ starting point for \eqref{eq:iterative_descent}; $c_1, \alpha\in (0, 1)$ parameters for the Armijo condition; $\eta_0$ starting value for the step length; $\Theta\in\N$ maximum number of backtracking steps; $K\in\N$ maximum number of iterations; $\mathcal{P}$ sub-problem for computing $\v{p}^{*\,(k)}$ at each iteration.
    
    \item[Procedure:] \quad
        \begin{algorithmic}[1]
        \For{$k=0,\ldots ,(K-1)$}
            \State $\v{p}^{*(k)}\gets$ solution of $\mathcal{P}$, defined by $\v{x}^{(k)}$
            \State $\eta^{(k)}\gets\eta_0$
            \For{$t=0,\ldots ,(\Theta-1)$}
                \If{\eqref{eq:armijo_moo_t} is true for each $i=1,\ldots ,m$}
                    \State break
                \Else
                    \State $\eta^{(k)}\gets \eta^{(k)}\alpha$
                \EndIf
            \EndFor
            \State $\widetilde{\v{x}}\gets \v{x}^{(k)} + \eta^{(k)} \v{p}^{*(k)}$
            \If{$\widetilde{\v{x}}$ is dominated by $\v{x}^{(k)}$} \Comment{Always false, if \eqref{eq:armijo_moo_t} is true $\forall \ i=1,\ldots ,m$}
                \State break
            \Else
                \State $\v{x}^{(k+1)}\gets \widetilde{\v{x}}$
            \EndIf
            
        \EndFor
        \State $\widehat{\v{x}}\gets \v{x}^{(k)}$
        \State \Return: $\widehat{\v{x}}$
        \end{algorithmic}
    \end{description}
\end{alg}

\subsubsection{Storage of Non-Dominated Intermediate Points}\label{sec:storage_pts}

Until now, we focused on the advantages of finding a new non-dominated point $\v{x}^{(k+1)}$ even if $\v{x}^{(k)}$ is Pareto critical. Nonetheless, since in MOO it is crucial to identify as many Pareto optimals as possible, it is better if we do not ``forget'' $\v{x}^{(k)}$ when it is not dominated by $\v{x}^{(k+1)}$; indeed, we recall that is possible to have both $\v{x}^{(k+1)}$ non-dominated by $\v{x}^{(k)}$, and vice-versa.

Therefore, we can modify \Cref{alg:my_mgd} adding a ``domination-check'' for $\v{x}^{(k)}$ with respect to $\v{x}^{(k+1)}$; specifically, if $\v{x}^{(k)}$ is not dominated by $\v{x}^{(k+1)}$, we store it into a list $C$. Then, at the end of the procedure, the algorithm returns the last vector computed for the sequence, denoted by $\widehat{\v{x}}$, and all the $\v{x}\in C$ that are non-dominated by all the other vectors in $C\cup \{\widehat{\v{x}}\}$; this set of vectors will represent the (approximation of) Pareto optimals discovered by the optimization method during its run (not necessarily global Pareto optimals, due to the local nature of the method).

The result of these modifications is \Cref{alg:my_mgd_storage}.

\begin{alg}\label{alg:my_mgd_storage}
    Let us consider the unconstrained MOO problem \eqref{eq:MOOprob}. Then, define the following MGD algorithm for the implementation of the descent method \eqref{eq:iterative_descent}.
    
    \begin{description}
    
    \item[Data:] $\v{x}^{(0)}\in\R^n$ starting point for \eqref{eq:iterative_descent}; $c_1, \alpha\in (0, 1)$ parameters for the Armijo condition; $\eta_0$ starting value for the step length; $\Theta\in\N$ maximum number of backtracking steps; $K\in\N$ maximum number of iterations; $\mathcal{P}$ sub-problem for computing $\v{p}^{*\,(k)}$ at each iteration.
    
    \item[Procedure:] \quad
        \begin{algorithmic}[1]
        \State $C\gets$ empty list \Comment{Initialization of the list of $\v{x}^{(k)}$ non-dominated by $\v{x}^{(k+1)}$}
        \For{$k=0,\ldots ,(K-1)$}
            \State $\v{p}^{*(k)}\gets$ solution of $\mathcal{P}$, defined by $\v{x}^{(k)}$
            \State $\eta^{(k)}\gets\eta_0$
            \For{$t=0,\ldots ,(\Theta-1)$}
                \If{\eqref{eq:armijo_moo_t} is true for each $i=1,\ldots ,m$}
                    \State break
                \Else
                    \State $\eta^{(k)}\gets \eta^{(k)}\alpha$
                \EndIf
            \EndFor
            \State $\widetilde{\v{x}}\gets \v{x}^{(k)} + \eta^{(k)} \v{p}^{*(k)}$
            \If{$\widetilde{\v{x}}$ is dominated by $\v{x}^{(k)}$}
                \State break
            \Else
                \State $\v{x}^{(k+1)}\gets \widetilde{\v{x}}$
            \EndIf
            \If{$\v{x}^{(k)}$ is not dominated by $\v{x}^{(k+1)}$}
                \State Add $\v{x}^{(k)}$ to C
            \EndIf
            
        \EndFor
        \State $\widehat{\v{x}}\gets\v{x}^{(k)}$
        \State $\widebar{C} \gets$ $\{\v{x}\in C \ | \ \v{x} \textit{ non-dom. by } \v{y}\,, \ \forall \v{y}\in C\cup\{\widehat{\v{x}}\}\,, \ \v{y}\neq \v{x} \}$
        \State \Return: $\widehat{\v{x}}$, $\widebar{C}$
        \end{algorithmic}
    \end{description}
\end{alg}

\subsubsection{Blockwise Implementation for Searching Global Pareto Optimals}\label{sec:blockwise}

We recall that MGD methods have a local nature. Therefore, all the Pareto critical points returned by the procedures are approximations of global and/or local Pareto optimals, mainly depending on the starting point $\v{x}^{(0)}$. Therefore, a basic multi-start approach for the optimization procedure, made with respect to a set of $N\in\N$ random starting points
\begin{equation*}\label{eq:multistart_X0}
    X^{(0)} := \{\v{x}_1^{(0)},\ldots ,\v{x}_N^{(0)}\}\subset\R^n\,,
\end{equation*}
can be a simple but efficient method for exploring and detecting as much as possible the Pareto set and the Pareto front of a MOO problem. 

Concerning a multi-start approach, it is interesting to observe that the solutions $\v{\rho}^*_1,\ldots ,\v{\rho}^*_N$ of $N$ LP problems like \eqref{eq:my_LP_explicit}, if concatenated into a vector $\v{\rho}^*=(\v{\rho}^*_1, \ldots ,\v{\rho}^*_N)=((\v{p}^*_1,\beta^*_1), \ldots ,(\v{p}^*_N, \beta^*_N))\in\R^{N(n+1)}$, are solution of the LP problem 

\begin{equation}\label{eq:my_LP_explicit_blockwise}
    \begin{cases}
        \min_{\v{\rho}\in\R^{N(n+1)}} [ \ \v{c}_1^T \ | \ \cdots \ | \ \v{c}_N^T  ]\, \v{\rho}\\
        \quad \\
        \begin{bmatrix}
            A_1 & \quad & \quad\\
            \quad & \ddots & \quad\\
            \quad & \quad & A_N
        \end{bmatrix}
        \v{\rho}
        \leq \v{0}\\
        \quad \\
        %
        -\gamma_j \v{e} \leq \v{p}_j \leq \gamma_j \v{e}\,, \ &\forall j=1,\ldots ,N\\
        \beta_j\leq 0\,, \ &\forall j=1,\ldots ,N\\
    \end{cases}
    \,,
\end{equation}
and vice-versa; where $\v{c}_j\in\R^{n+1}$, $A_j\in\R^{m\times (n+1)}$, and $\gamma_j\in\R_+$ are: the vector of the linear objective function, the matrix of inequality constraints, and the scalar for boundary values of $\v{p}_j$, respectively, of the $j$-th LP problem, for each $j=1,\ldots ,N$.

Then, if the user cannot directly parallelize the multi-start procedure, they can still exploit \eqref{eq:my_LP_explicit_blockwise} for a fast computation of the $N$ directions $\v{p}_1^*,\ldots ,\v{p}_N^*$. Of course, techniques for fastening the evaluation of the Jacobians $G_1,\ldots , G_N$ are suggested, but their discussion does not fall within the scope of this work.

A similar block-wise parallelization can be implemented also for problem \eqref{eq:fliege2000}.

\section{Numerical Experiments}\label{sec:experiments}

In this section, we study and analyze the behavior of the new method \eqref{eq:my_LP_explicit} for computing the shared descent direction of multiple objectives, using the new backtracking strategy \eqref{eq:seq_bcktrck_theory} for MGD methods. Specifically, we run \Cref{alg:my_mgd_storage}, using \eqref{eq:my_LP_explicit} for computing the direction $\v{p}^{*\,(k)}$, with parameter $c_\beta=\norm{\v{g}}_2 + 1$. Then, the analysis is made through a comparison with \cite{Fliege2000}, representing the baseline; i.e., the baseline is represented by $\eqref{eq:fliege2000}$, using a backtracking strategy designed for building a sequence strictly decreasing for all the objectives; we report the pseudocode of a MGD implementation using this backtracking strategy in \ref{sec:strdec_bcktrck} (see \Cref{alg:mgd_classic}). Nonetheless, we study also how the behavior of \eqref{eq:fliege2000} changes if \eqref{eq:seq_bcktrck_theory} is used (i.e., we run \Cref{alg:my_mgd_storage} with respect to \eqref{eq:fliege2000}) and how the behavior of \eqref{eq:my_LP_explicit} changes if the backtracking strategy for strictly decreasing objectives is used (i.e., we run \Cref{alg:mgd_classic} with respect to \eqref{eq:my_LP_explicit}).

Summarizing, we analyze the behavior of:
\begin{itemize}
    \item \Cref{alg:mgd_classic} and \Cref{alg:my_mgd_storage} run with respect to \eqref{eq:fliege2000}. We denote them as BT$_{\rm base}$-\ref{eq:fliege2000} and \ref{eq:seq_bcktrck_theory}-\ref{eq:fliege2000}, respectively;
    \item \Cref{alg:mgd_classic} and \Cref{alg:my_mgd_storage} run with respect to \eqref{eq:my_LP_explicit}. We denote them as BT$_{\rm base}$-\ref{eq:my_LP_explicit} and \ref{eq:seq_bcktrck_theory}-\ref{eq:my_LP_explicit}, respectively;.
\end{itemize}
The baseline is represented by BT$_{\rm base}$-\ref{eq:fliege2000} (see \cite{Fliege2000}), while the new method we propose is represented by \ref{eq:seq_bcktrck_theory}-\ref{eq:my_LP_explicit}. All the other ``intermediate'' cases are added to the analysis to better understand the effects of using different backtracking strategies and different shared descent directions in a MGD method.

All the cases are compared analyzing the results obtained by applying them to three well-known MOO test problems: the Fonseca-Fleming problem \cite{Fonseca1995}, the Kursawe problem \cite{Kursawe1990}, and the Viennet problem \cite{Viennet1996}. For all these problems, all the algorithms are executed with respect to $N=500$ randomly sampled starting points (exploiting a blockwise implementation, see \Cref{sec:blockwise}); the backtracking strategy is characterized by a maximum number of steps $\Theta=40$ and parameters $c_1=10^{-9}$, $\alpha=0.8$, and $\eta_0=1$ for the Armijo condition \eqref{eq:armijo_moo_t}. For more details about the test problems, the starting point samplings, etc., see \ref{sec:moo_test_probs}.

The comparison between the methods is performed by measuring the ratio of sequences, among all the $N$ ones, that have detected at least one Pareto critical that is non-dominated by all the other Pareto critical points detected by itself and the other sequences. Indeed, such a kind of Pareto critical points can be considered as a good approximation of a global Pareto optimal and, therefore, it means that the sequence reached at least one point near to the Pareto set of the problem. 

Specifically, let $\widebar{C}_{j}$ be the set containing the outputs of a MGD algorithm, starting from $\v{x}^{(0)}_j$, for each $j=1,\ldots ,N$; i.e., $\widebar{C}_j = \{\widehat{\v{x}}\}$ for \Cref{alg:mgd_classic}, while $\widebar{C}_j$ contains $\widehat{\v{x}}$ and all the non-dominated critical points of the sequence for \Cref{alg:my_mgd_storage} (see the algorithm's pseudocode). Let $\widebar{C}^N$ be the union of all the sets $\widebar{C}_j$, i.e., $\widebar{C}^N:=\bigcup_{j=1}^N \widebar{C}_j$, and let $\widetilde{C}_j$ be the set points in $\widebar{C}_j$ non-dominated by all the other points in $\widebar{C}^N$, i.e.
\begin{equation}\label{eq:nondom_j}
    \widetilde{C}_j := \lbrace \v{x}\in\widebar{C}_j \ | \ \v{x} \textit{ non-dom. by } \v{y}\,, \ \forall \v{y}\in\widebar{C}^N, \v{y}\neq\v{x} \rbrace\,.
\end{equation}
Then, for a MGD algorithm, we define the index
\begin{equation}\label{eq:nodom_ratio_index}
    \mathrm{P}^N := \frac{\#\text{ of non-empty } \widetilde{C}_j}{N}=\frac{\left|\{j \ | \ \widetilde{C}_j\neq\emptyset\,, \ j=1,\ldots ,N\}\right|}{N}\,,
\end{equation}
where the superscript $N$ denotes the number of sequences used for computing the index. We define this index as the \emph{global Pareto ratio} of the MGD algorithm, with respect to $N$ runs.

\begin{rem}[About the global Pareto ratio index]
    We introduce index \eqref{eq:nodom_ratio_index} in order to analyze the performance of different MGD algorithms independently on the availability of a known Pareto set and Pareto front. Alternative indices can be used, such as the Hypervolume (HV) indicator, which is able to measure the ``quality'' of a Pareto set/front through the hyper-volume of the dominated region in the objectves' space (see \cite{Zitzler2000,Audet2021}). Nonetheless, the HV (and the other MOO indicators) focuses on the spread properties of the detected global Pareto optimals; therefore, we prefer instead to use index \eqref{eq:nodom_ratio_index} for our analyses, because it is better for quantifying the ability of a MGD algorithm to reach the Pareto set of the problem, varying the starting point.
\end{rem}

\subsection{Numerical Results and Analyses}\label{sec:num_res}

In this subsection, we analyze and study the results obtained by the four given MGD algorithms applied to the three test problems considered (FonsecaFleming, Kursawe, and Viennet, see \ref{sec:moo_test_probs}). 
We start analyzing the results for the Fonseca-Fleming problem, focusing on the different paths generated by using directions that are solutions of \eqref{eq:fliege2000} or \eqref{eq:my_LP_explicit}; in this case, the type of backtracking strategy is not crucial, due to the characteristics of the objective functions. On the contrary, when we continue to the results for the Kursawe problem, we observe how the usage of \ref{eq:seq_bcktrck_theory} (i.e., \Cref{alg:my_mgd_storage}) improves concretely the global Pareto ratio $\mathrm{P}^N$. In the end, we analyze the results for the Viennet problem; for this latter case, we observe that only the MGD method \ref{eq:seq_bcktrck_theory}-\ref{eq:my_LP_explicit} is able to return a good $\mathrm{P}^N$ value, outperforming all the other methods. A summary of the global Pareto ratio values for all the methods, with respect to all the problems, is reported in \Cref{tab:global_pareto_percentage}.

\begin{table}[htb]
    \centering
    \begin{tabular}{|c|c||c|c|}
    \hline
    Problem & LP Problem & BT$_{\rm base}$ (\Cref{alg:mgd_classic}) & \ref{eq:seq_bcktrck_theory} (\Cref{alg:my_mgd_storage})\\
    \hline
    \hline
    \multirow{2}{*}{Fonseca-Fleming} & \ref{eq:fliege2000} & $100.00\%$ & $100.00\%$\\
    \hhline{~---}
    & \ref{eq:my_LP_explicit} & $100.00\%$ & $100.00\%$ \\
    \hline
    \hline
    \multirow{2}{*}{Kursawe} & \ref{eq:fliege2000} & $31.20\%$ & $66.40\%$\\
    \hhline{~---}
    & \ref{eq:my_LP_explicit} & $24.00\%$ & $63.60\%$\\
    \hline
    \hline
    \multirow{2}{*}{Viennet} & \ref{eq:fliege2000} & $37.00\%$ & $42.00\%$\\
    \hhline{~---}
    & \ref{eq:my_LP_explicit} & $34.80\%$ & $92.80\%$\\
    \hline
    \end{tabular}
    \caption{Global Pareto convergence ratios $\mathrm{P}^N$ (represented as percentages) computed for all the different cases, with respect to $N=500$ sequences $\{\v{x}_j^{(k)}\}_{k\in\N}$, $j=1,\ldots ,N$.}
    \label{tab:global_pareto_percentage}
\end{table}

\subsubsection{Fonseca-Fleming}

With respect to the Fonseca-Fleming test problem, all the MGD methods return $100\%$ of $\mathrm{P}^N$ (see \Cref{tab:global_pareto_percentage}). Looking only at these values, we deduce that the sequences built with \eqref{eq:my_LP_explicit} are a valuable alternative to the ones built with \eqref{eq:fliege2000}. We do not see differences in the performances while changing the backtracking strategy because the characteristics of the problem are such that \eqref{eq:armijo_moo_t} is very easy to be satisfied in regions far from the Pareto set (e.g., both the objective functions are convex); therefore, all the sequences are decreasing with respect to all the objectives except when an approximation of a global Pareto optimal has already been reached.

However, looking at the paths of the sequences' images in the objectives' space in \Cref{fig:FF_obj}, we observe different behaviors for \eqref{eq:fliege2000} and \eqref{eq:my_LP_explicit}. For both the methods, the paths end (approximately) on the Pareto front of the problem, but the paths generated with \eqref{eq:fliege2000} are attracted more by the ``center'' of the front (Figures \ref{fig:FF_obj}a and \ref{fig:FF_obj}b), while the paths generated with \eqref{eq:my_LP_explicit} are distributed more uniformly on the front (Figures \ref{fig:FF_obj}c and \ref{fig:FF_obj}d). 

Summarizing, with these results we show that \eqref{eq:my_LP_explicit} is able to generate good sequences for a MGD algorithm, but the sequences we obtain are characterized by a different behavior from the ones generated with \eqref{eq:fliege2000} (as expected, by construction of \eqref{eq:my_LP_explicit}).

\begin{figure}[hbt]
    \centering
    \subcaptionbox{BT$_{\rm base}$-\ref{eq:fliege2000}}{\includegraphics[trim={0.5cm 1cm 5.5cm 3.5cm},clip,width=0.49\textwidth,height=0.165\textheight]{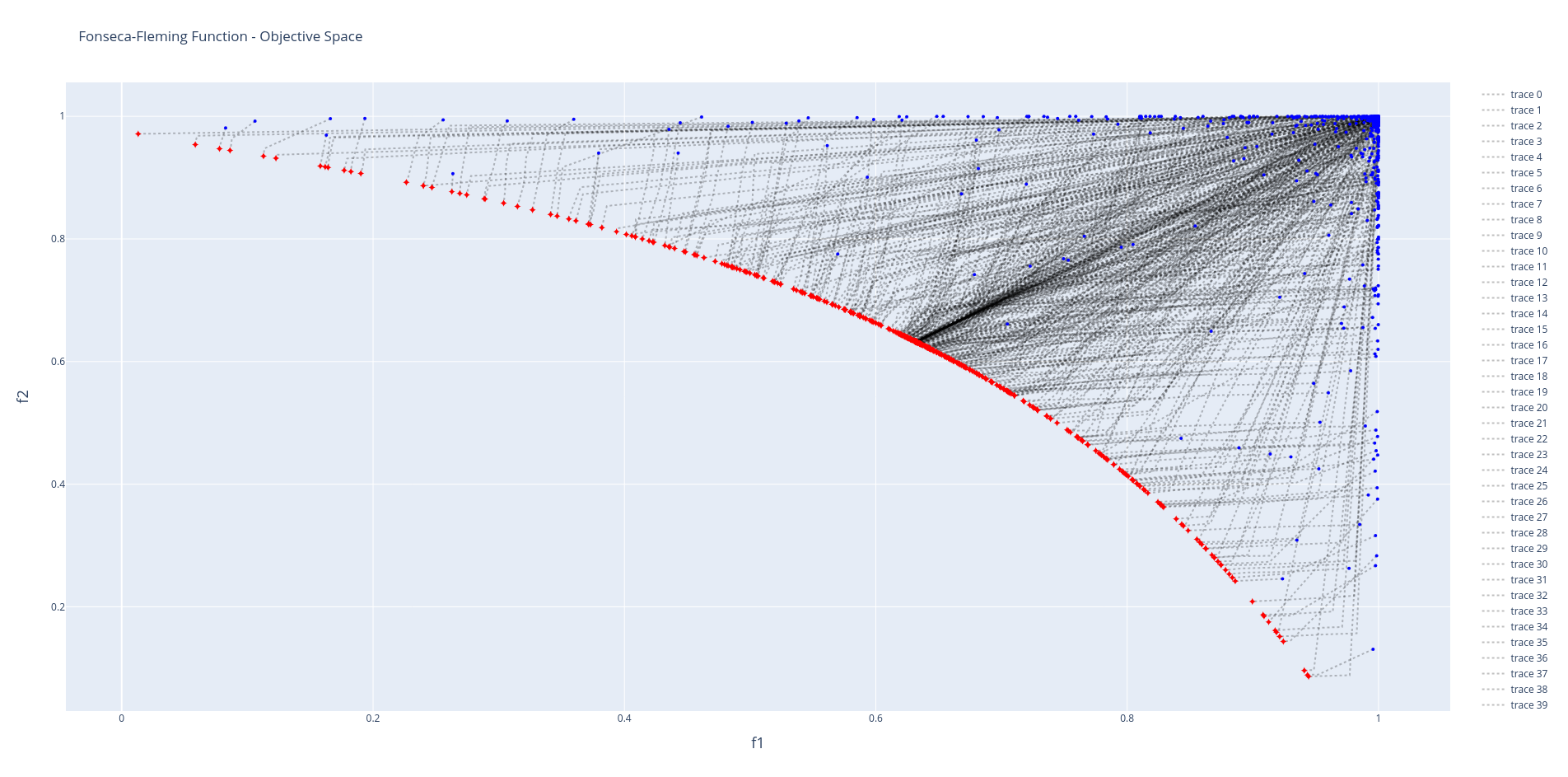}}
    \subcaptionbox{\ref{eq:seq_bcktrck_theory}-\ref{eq:fliege2000}}{\includegraphics[trim={0.5cm 1cm 5.5cm 3.5cm},clip,width=0.49\textwidth,height=0.165\textheight]{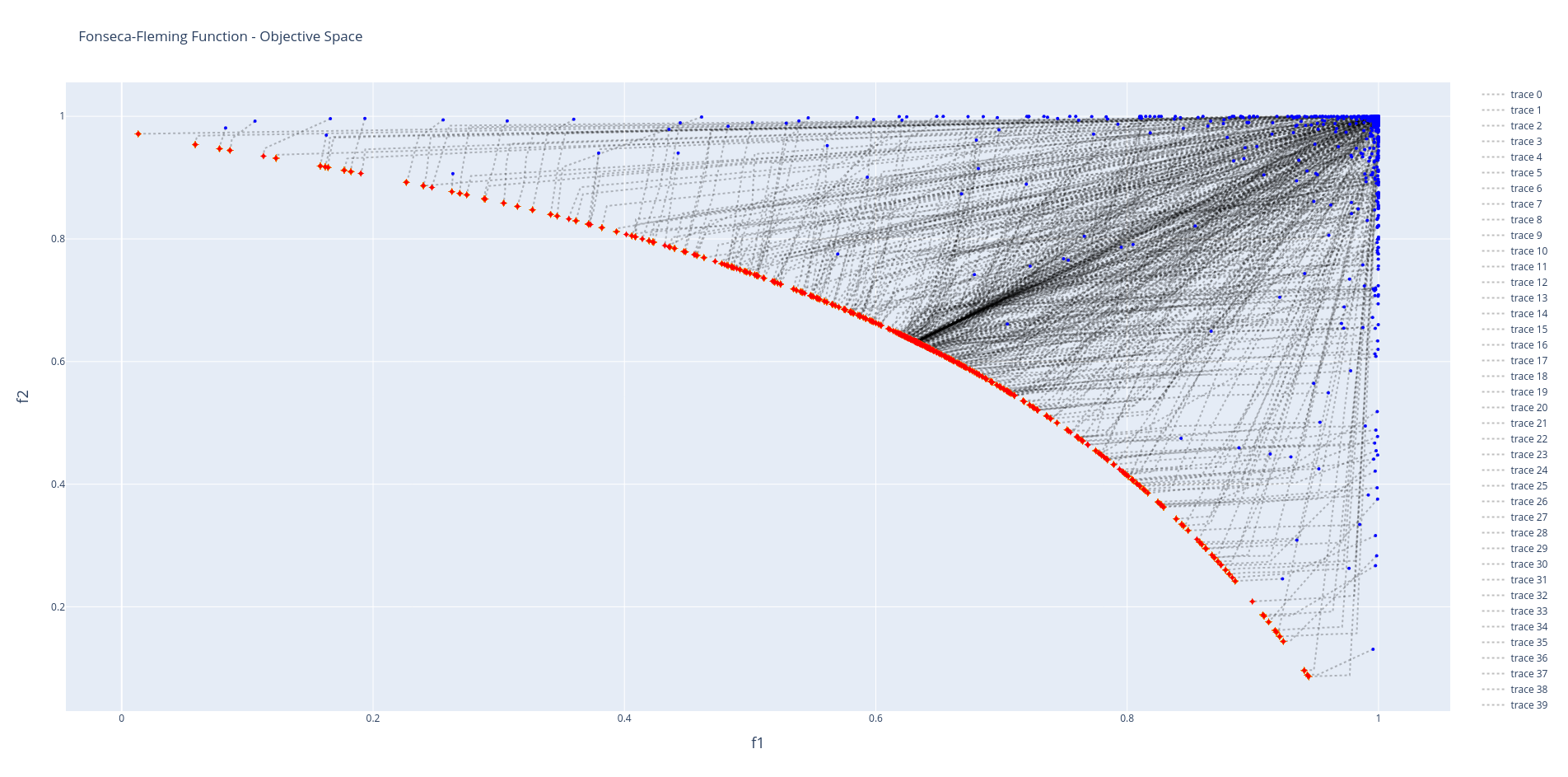}}
    \subcaptionbox{BT$_{\rm base}$-\ref{eq:my_LP_explicit}}{\includegraphics[trim={0.5cm 1cm 5.5cm 3.5cm},clip,width=0.49\textwidth,height=0.165\textheight]{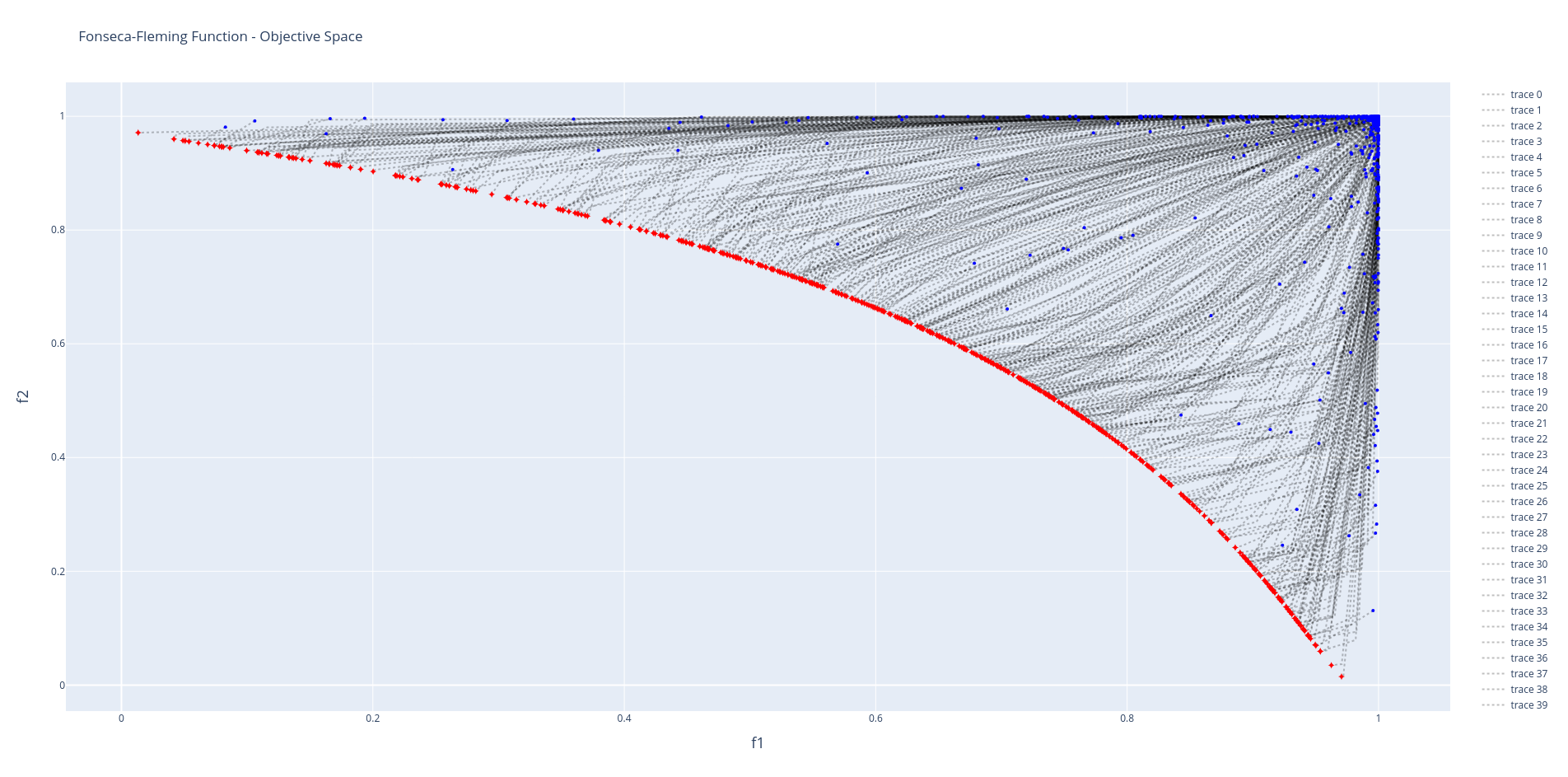}}
    \subcaptionbox{\ref{eq:seq_bcktrck_theory}-\ref{eq:my_LP_explicit}}{\includegraphics[trim={0.5cm 1cm 5.5cm 3.5cm},clip,width=0.49\textwidth,height=0.165\textheight]{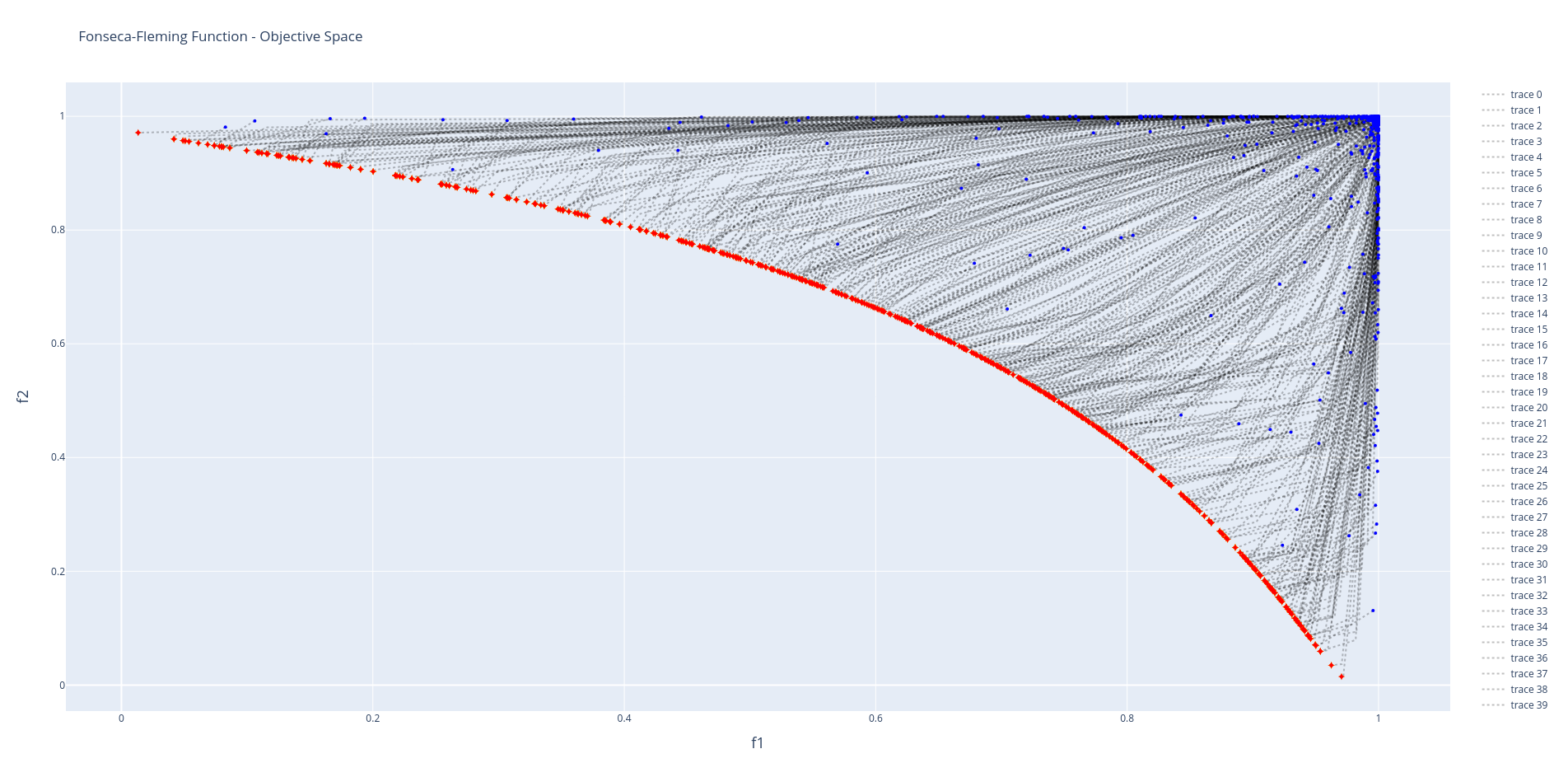}}
    \caption{Fonseca-Flemig test case. Movement of the $N=500$ sequences in the objectives' space. The blue dots are the starting points' images $\v{f}(\v{x}^{(0)}_j)\in\R^2$, $j=1,\ldots ,N$, while the red dots are the last points of the sequence. The black, dotted, and piece-wise linear curves describe the movement of each sequence from its starting point to its last element.  
    Critical points stored during \Cref{alg:my_mgd_storage} are not represented because they are (almost) coincident with red dots.}
    \label{fig:FF_obj}
\end{figure}

\subsubsection{Kursawe}

The Kursawe problem is more complicated than the Fonseca-Fleming one. In this case, analogously to the Fonseca-Fleming test, we observe similar performances but different path behaviors for \eqref{eq:fliege2000} and \eqref{eq:my_LP_explicit} (given the same backtracking strategy); actually, the performances of methods based on \eqref{eq:my_LP_explicit} are a bit smaller than the performances of methods based on \eqref{eq:fliege2000} (see \Cref{tab:global_pareto_percentage}). The different path behaviors of the two methods are evident if \Cref{fig:K_obj}; in particular, we have that \eqref{eq:my_LP_explicit} generates sequences attracted more by the ``center'' of the front, while \eqref{eq:fliege2000} generates sequences mainly focused in reducing the value of $f_1$ (i.e., ``leftward movement'' in the objectives' space).

With respect to the global Pareto ratio, the real difference is made by \ref{eq:seq_bcktrck_theory}. Indeed, independently on the LP problem used for computing the directions, the adoption of the new backtracking strategy guarantees a $\mathrm{P}^N\simeq 65\%$; more than twice the value obtained using \Cref{alg:mgd_classic} (see \Cref{tab:global_pareto_percentage} and the approximated Pareto front/set in \Cref{fig:K_front} and \Cref{fig:K_set}, respectively). We can explain this phenomenon looking at the orange dots in Figures \ref{fig:K_obj}b and \ref{fig:K_obj}d; the orange dots are points $\v{x}_j^{(k)}$ with respect to which the backtracking strategy cannot find a point along the direction $\v{p}^{*\,(k)}$ that is able to decrease all the objectives, but \eqref{eq:seq_bcktrck_theory} finds $\v{x}_j^{(k+1)}$ such that it is not dominated by $\v{x}_j^{(k)}$ and vice-versa; such a kind of points are Pareto critical points or points very near to them (see \Cref{fig:K_dom_critregions}). In Figures \ref{fig:K_obj}b and \ref{fig:K_obj}d, we clearly see that there are paths of (almost) Pareto critical orange dots that (often) end with an approximated global Pareto optimal; i.e., \ref{eq:seq_bcktrck_theory} permits the movement along trajectories made of Pareto critical points, increasing the probability of reaching the Pareto set/front of the problem. On the contrary, BT$_{\rm base}$ stops the iterations as soon as a Pareto critical point is reached, reducing the probability of finding a global Pareto optimal from a ``non-suitable'' starting point; an example is represented by the dominated red dots in Figures \ref{fig:K_obj}a and \ref{fig:K_obj}c.

\begin{figure}[htb!]
    \centering
    \includegraphics[trim={4.5cm 0cm 4.5cm 0cm},clip,width=0.45\textwidth]{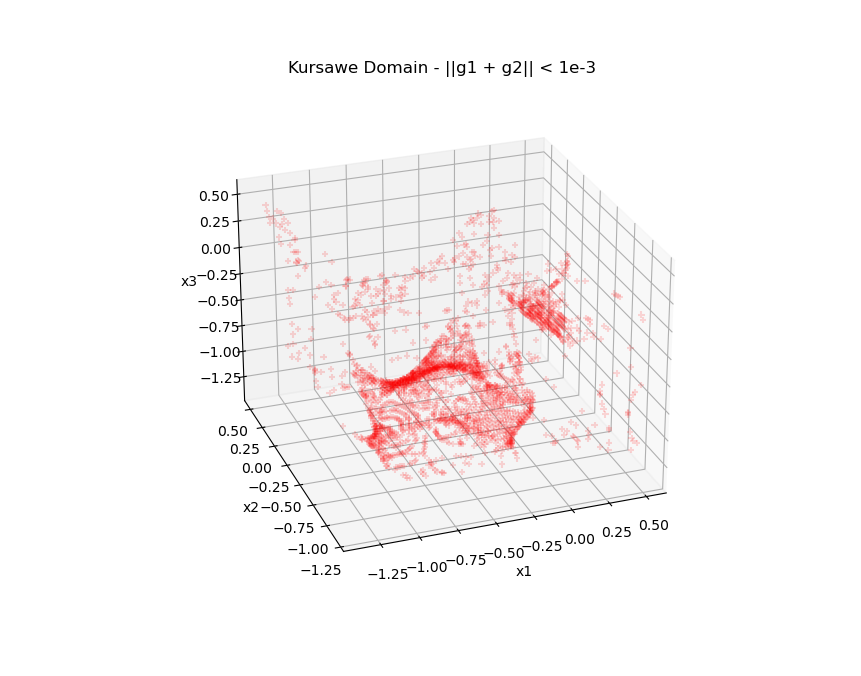}
    \caption{Kursawe test case. The red dots are points in the cube $[-1.5,0.5]^3$ (see \ref{sec:moo_test_probs}) where the normalized gradients are (almost) opposite (i.e., $\norm{\bar{\v{g}}_1 + \bar{\v{g}}_2}_2 < 10^{-3}$).}
    \label{fig:K_dom_critregions}
\end{figure}

We conclude the analysis of the Kursawe problem focusing again on \Cref{fig:K_dom_critregions}. We observe that the Kursawe problem is characterized by $m=2$ objective functions; therefore, for the observations in \Cref{rem:case2objs} we have that the only difference between \eqref{eq:fliege2000} and \eqref{eq:my_LP_explicit} is in the properties of the directions evaluated for the points that are not Pareto critical. For Pareto critical points, the only difference is in the norm of $\v{p}^*$, if it is not equal to the null vector.

\begin{figure}[htb!]
    \centering
    \subcaptionbox{BT$_{\rm base}$-\ref{eq:fliege2000}}{\includegraphics[trim={0.5cm 1cm 5.5cm 3.5cm},clip,width=0.49\textwidth,height=0.165\textheight]{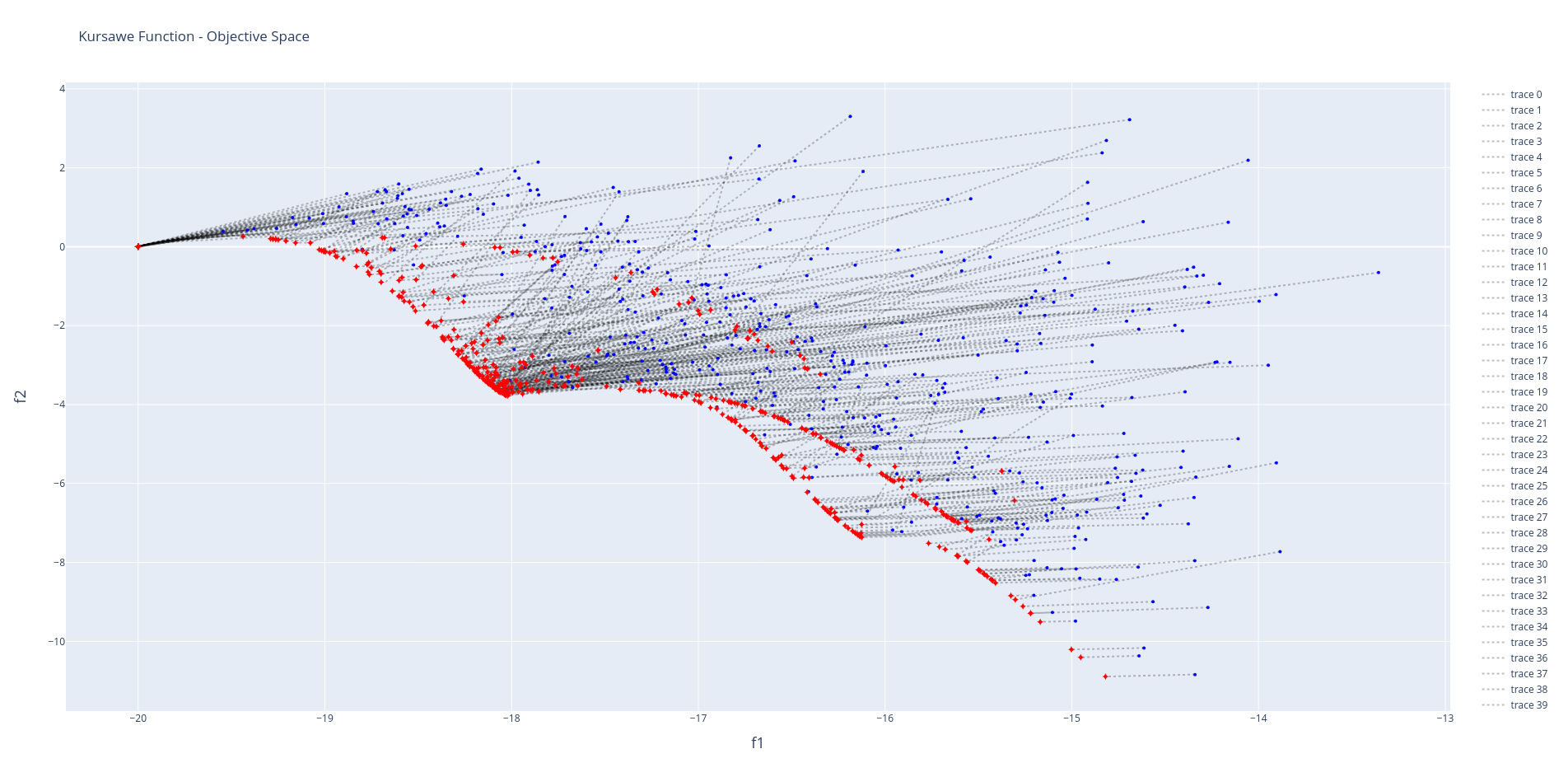}}
    \subcaptionbox{\ref{eq:seq_bcktrck_theory}-\ref{eq:fliege2000}}{\includegraphics[trim={0.5cm 1cm 5.5cm 3.5cm},clip,width=0.49\textwidth,height=0.165\textheight]{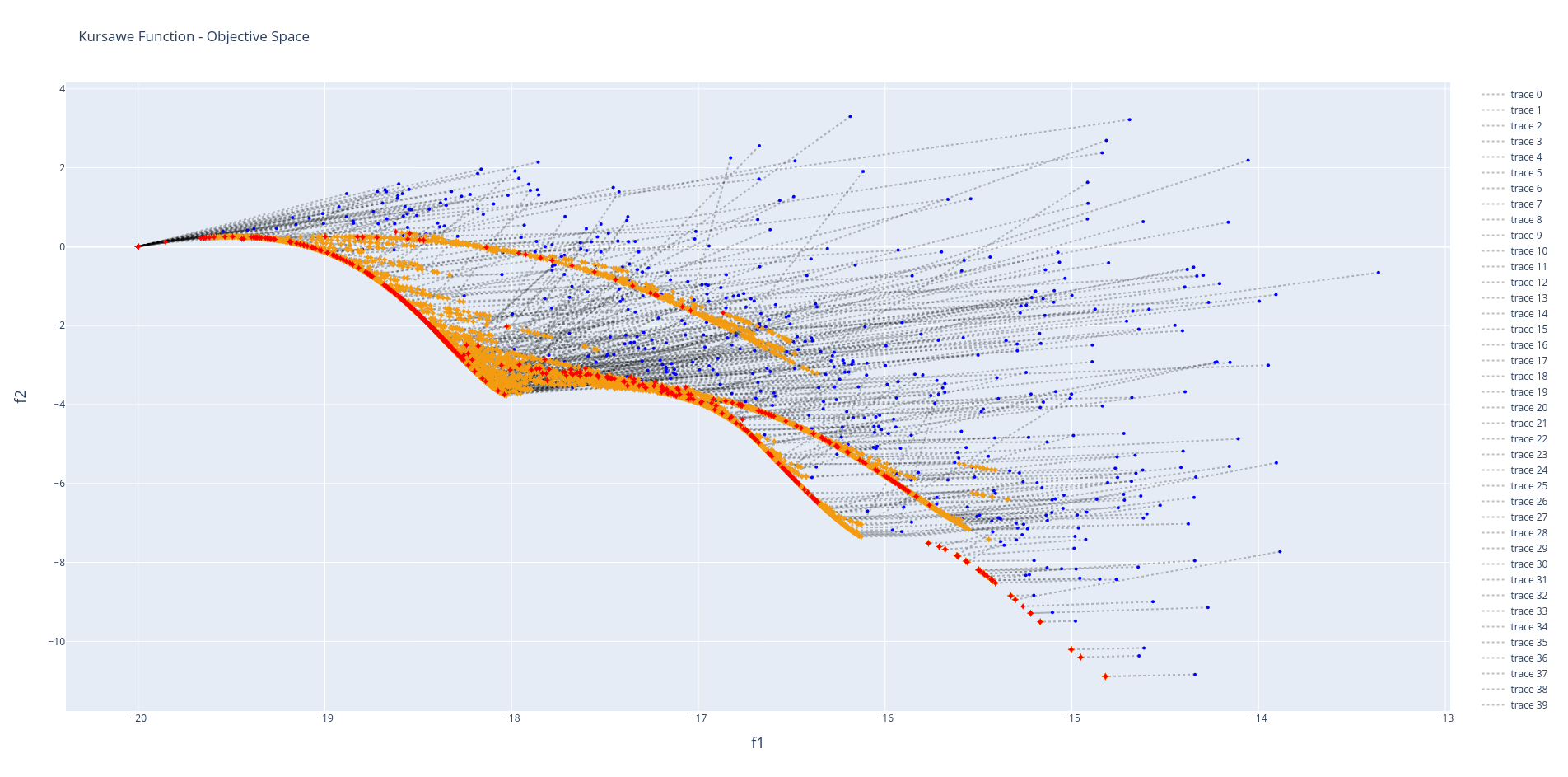}}
    \subcaptionbox{BT$_{\rm base}$-\ref{eq:my_LP_explicit}}{\includegraphics[trim={0.5cm 1cm 5.5cm 3.5cm},clip,width=0.49\textwidth,height=0.165\textheight]{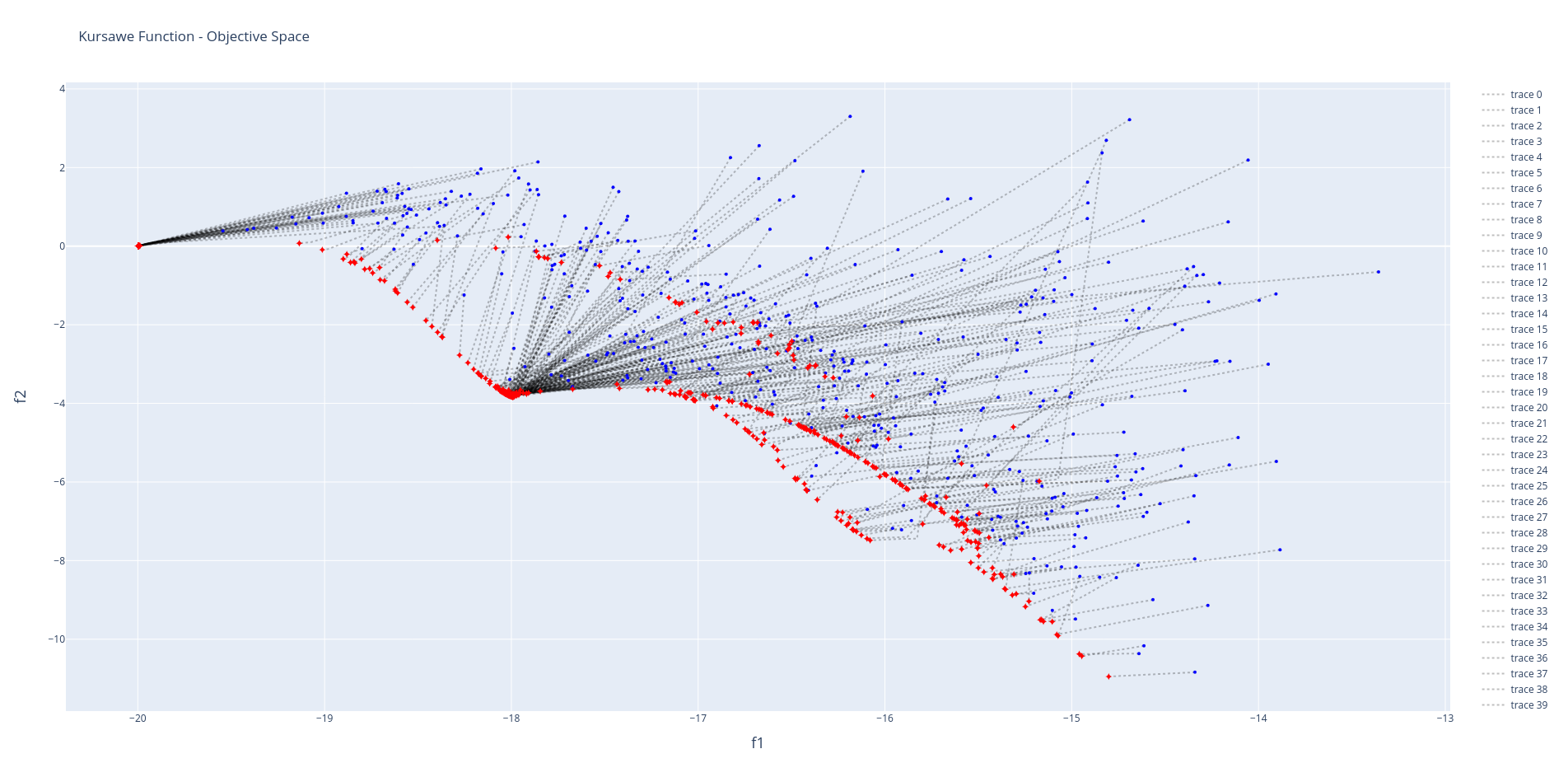}}
    \subcaptionbox{\ref{eq:seq_bcktrck_theory}-\ref{eq:my_LP_explicit}}{\includegraphics[trim={0.5cm 1cm 5.5cm 3.5cm},clip,width=0.49\textwidth,height=0.165\textheight]{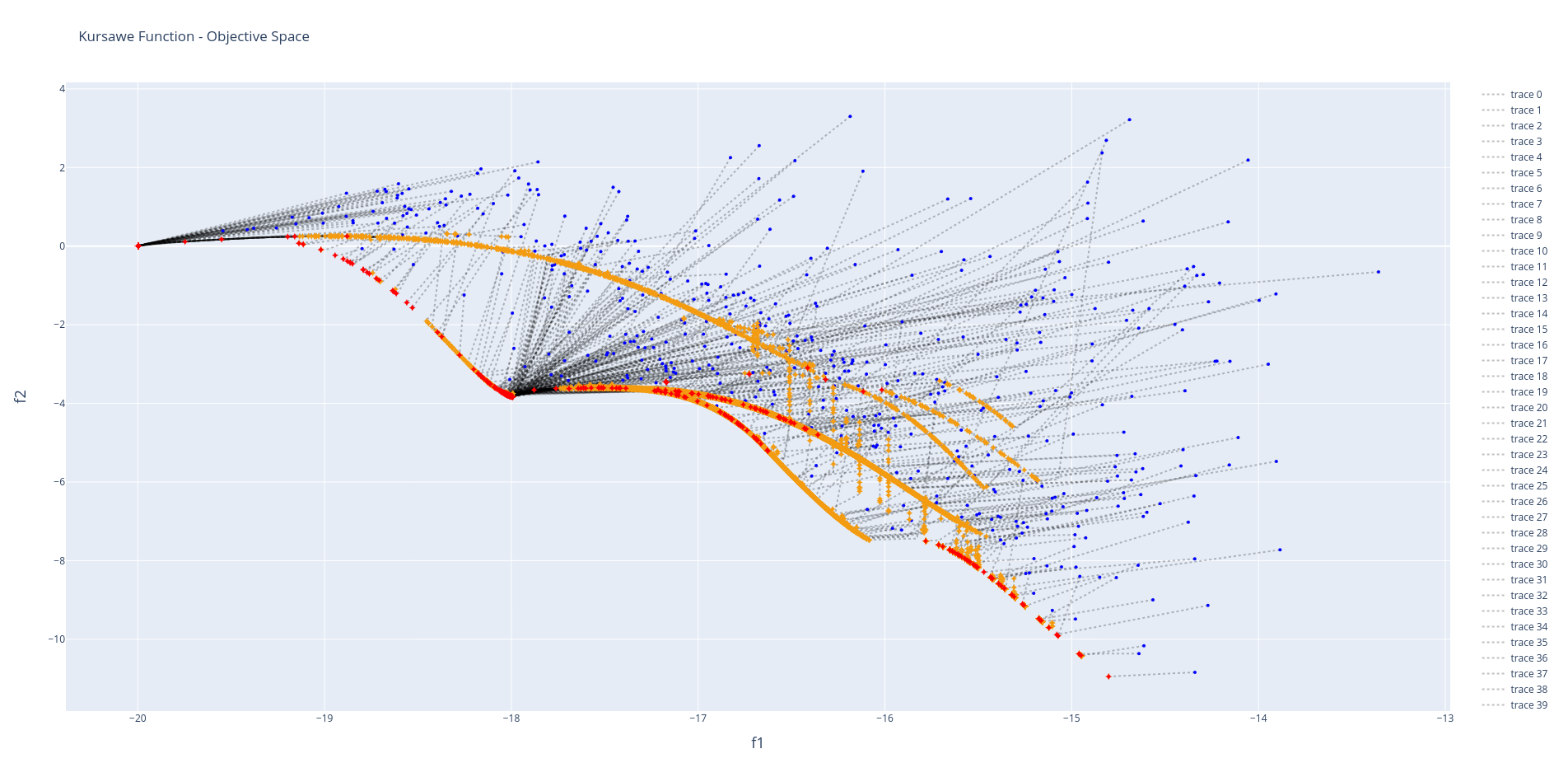}}
    \caption{Kursawe test case. Movement of the $N=500$ sequences in the objectives' space. The blue dots are the starting points' images $\v{f}(\v{x}^{(0)}_j)\in\R^2$, $j=1,\ldots ,N$, while the red dots are the last points of the sequence. The orange dots are the critical points stored during \Cref{alg:my_mgd_storage}, before the final pruning (see the pseudocode).
    The black, dotted, and piece-wise linear curves describe the movement of each sequence from its starting point to its last element.}
    \label{fig:K_obj}
\end{figure}

\begin{figure}[htb!]
    \centering
    \subcaptionbox{BT$_{\rm base}$-\ref{eq:fliege2000}}{\includegraphics[trim={0.5cm 1cm 5.5cm 3.5cm},clip,width=0.49\textwidth,height=0.165\textheight]{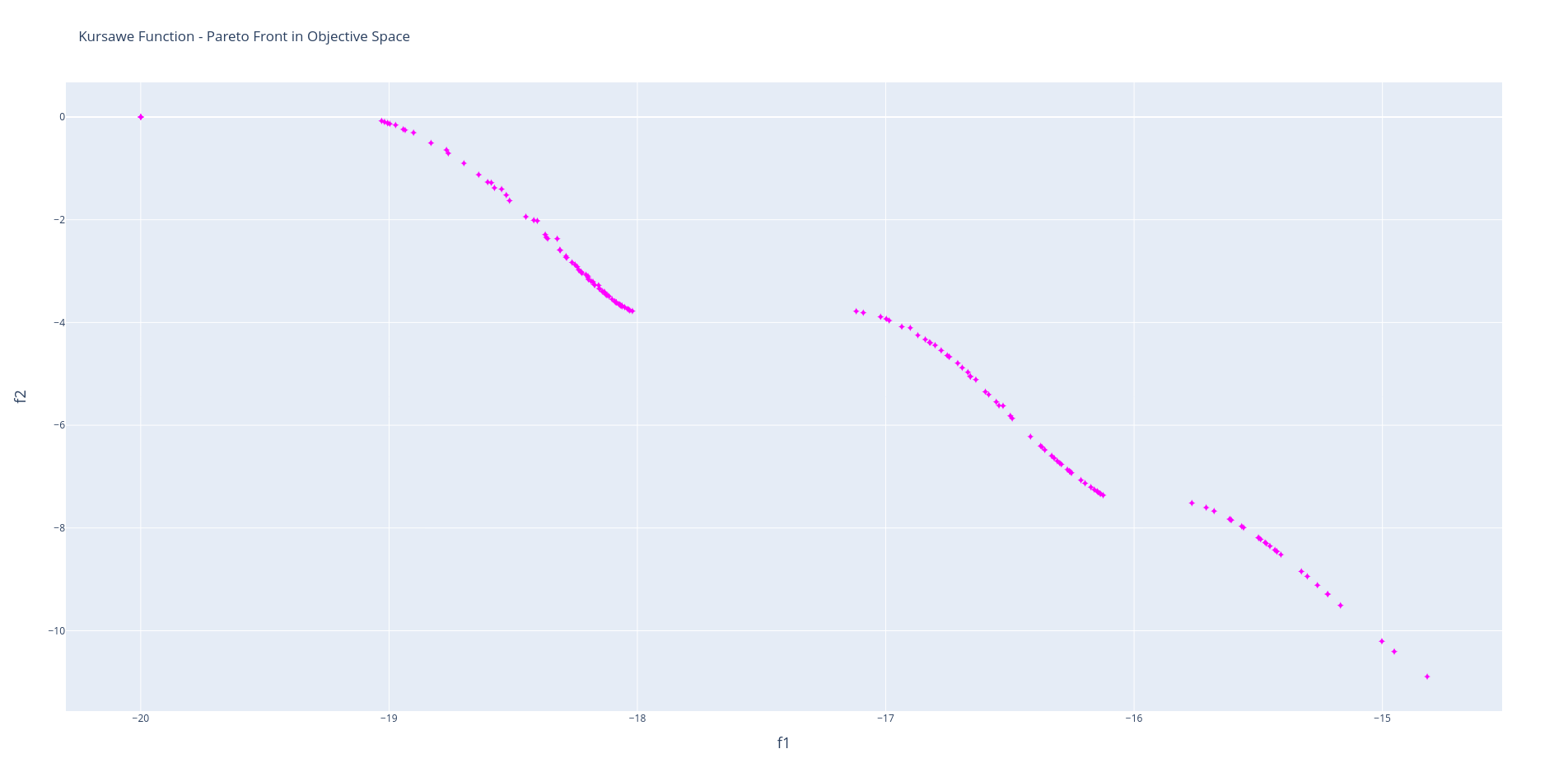}}
    \subcaptionbox{\ref{eq:seq_bcktrck_theory}-\ref{eq:fliege2000}}{\includegraphics[trim={0.5cm 1cm 5.5cm 3.5cm},clip,width=0.49\textwidth,height=0.165\textheight]{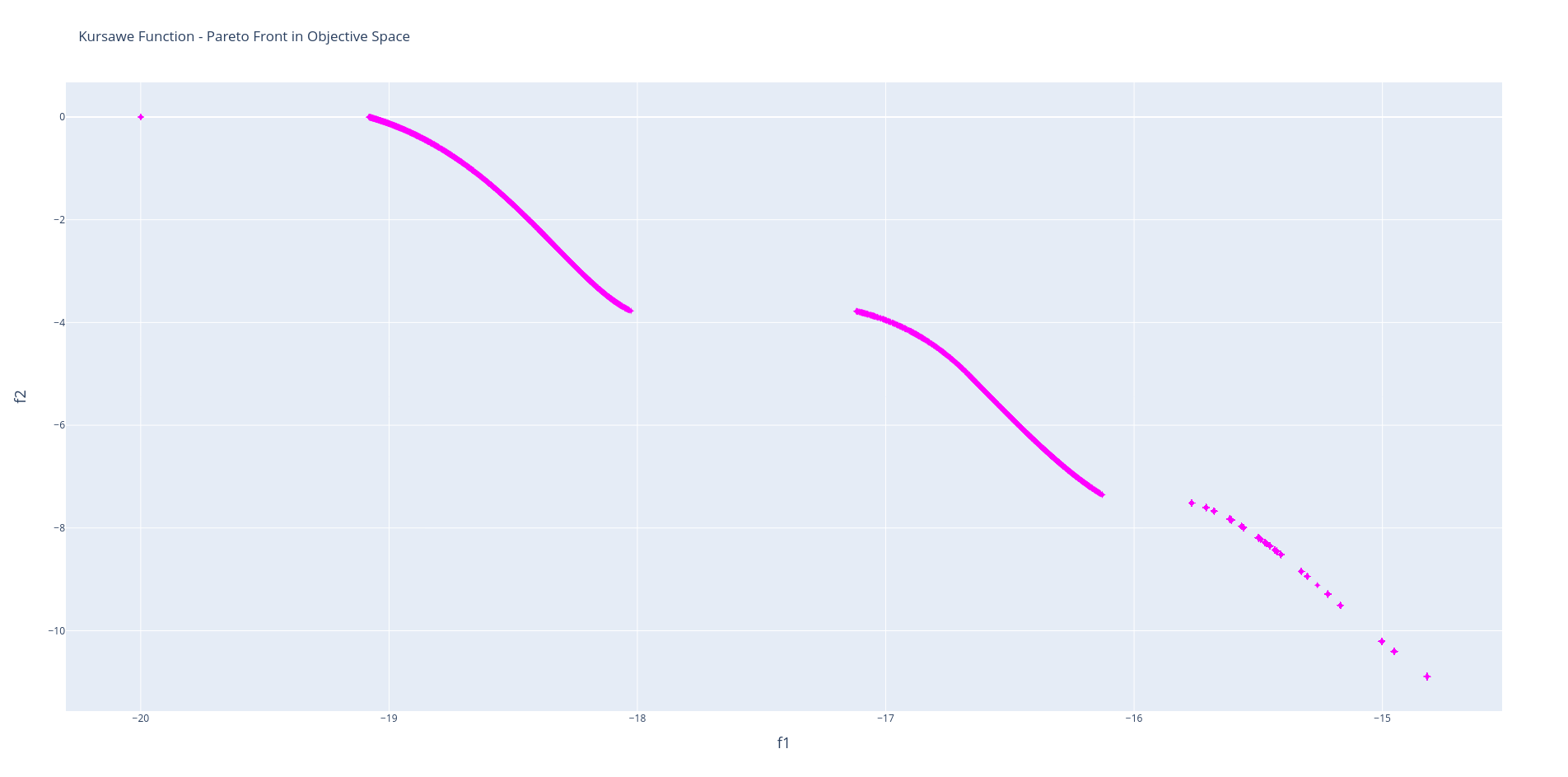}}
    \subcaptionbox{BT$_{\rm base}$-\ref{eq:my_LP_explicit}}{\includegraphics[trim={0.5cm 1cm 5.5cm 3.5cm},clip,width=0.49\textwidth,height=0.165\textheight]{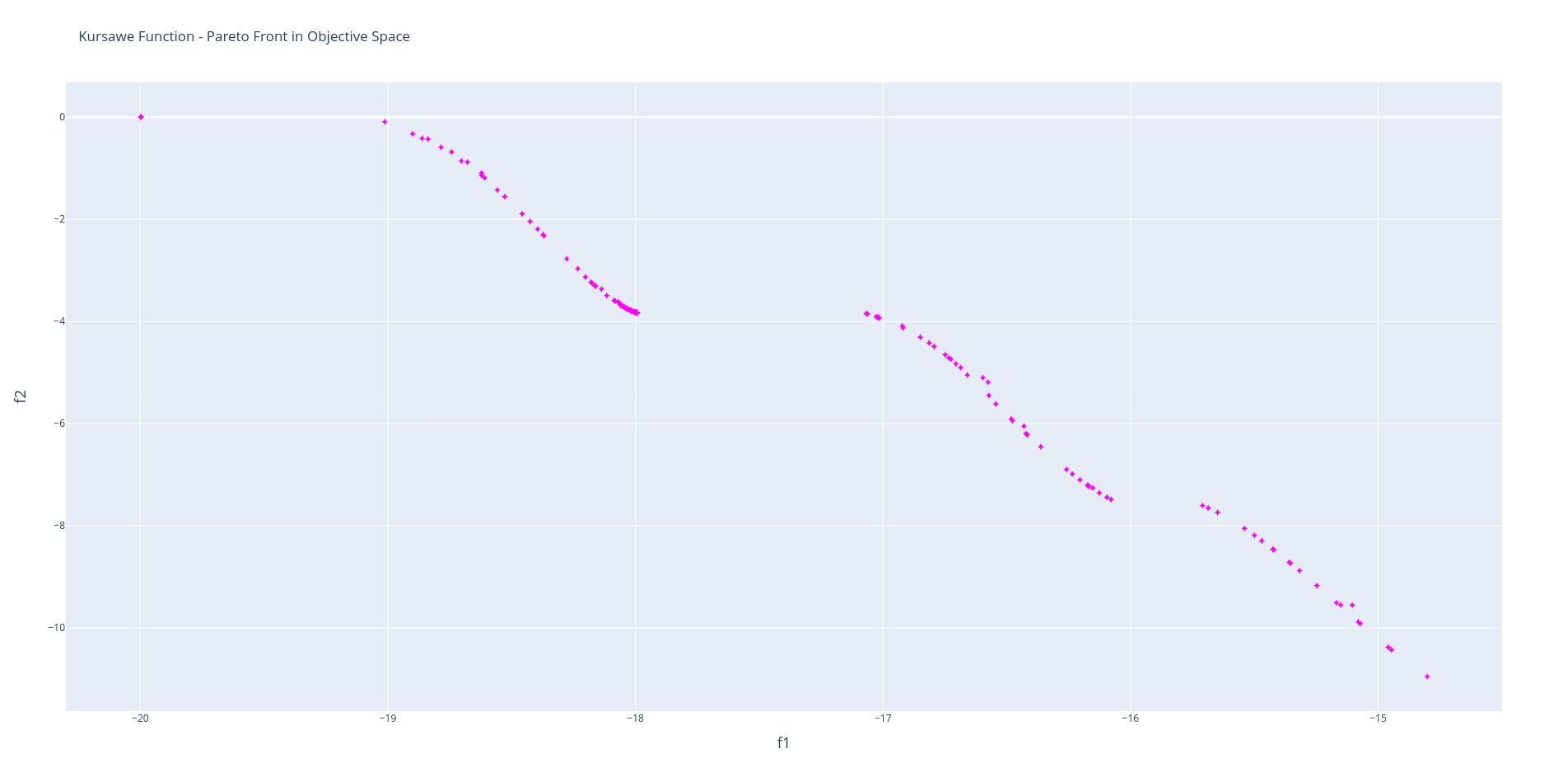}}
    \subcaptionbox{\ref{eq:seq_bcktrck_theory}-\ref{eq:my_LP_explicit}}{\includegraphics[trim={0.5cm 1cm 5.5cm 3.5cm},clip,width=0.49\textwidth,height=0.165\textheight]{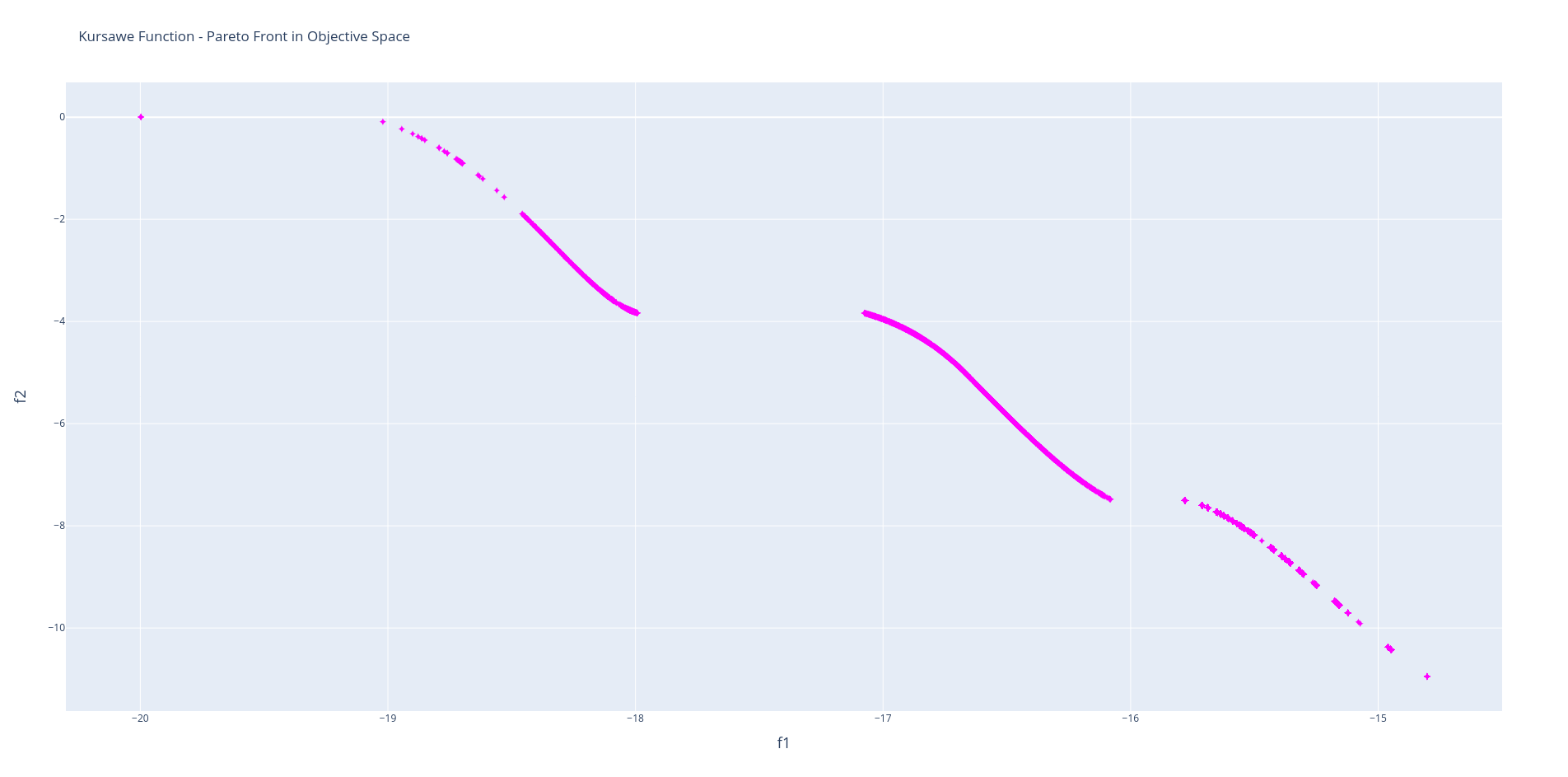}}
    \caption{Kursawe test case. Images in the objectves' space of the non-dominated points among all the outputs returned by the MGD algorithms; i.e., images of the points in $\bigcup_{j=1}^N \widetilde{C}_j$ (see \eqref{eq:nondom_j}).}
    \label{fig:K_front}
\end{figure}

\begin{figure}[htb!]
    \centering
    \subcaptionbox{BT$_{\rm base}$-\ref{eq:fliege2000}}{\includegraphics[trim={20cm 2cm 20cm 6cm},clip,width=0.4\textwidth]{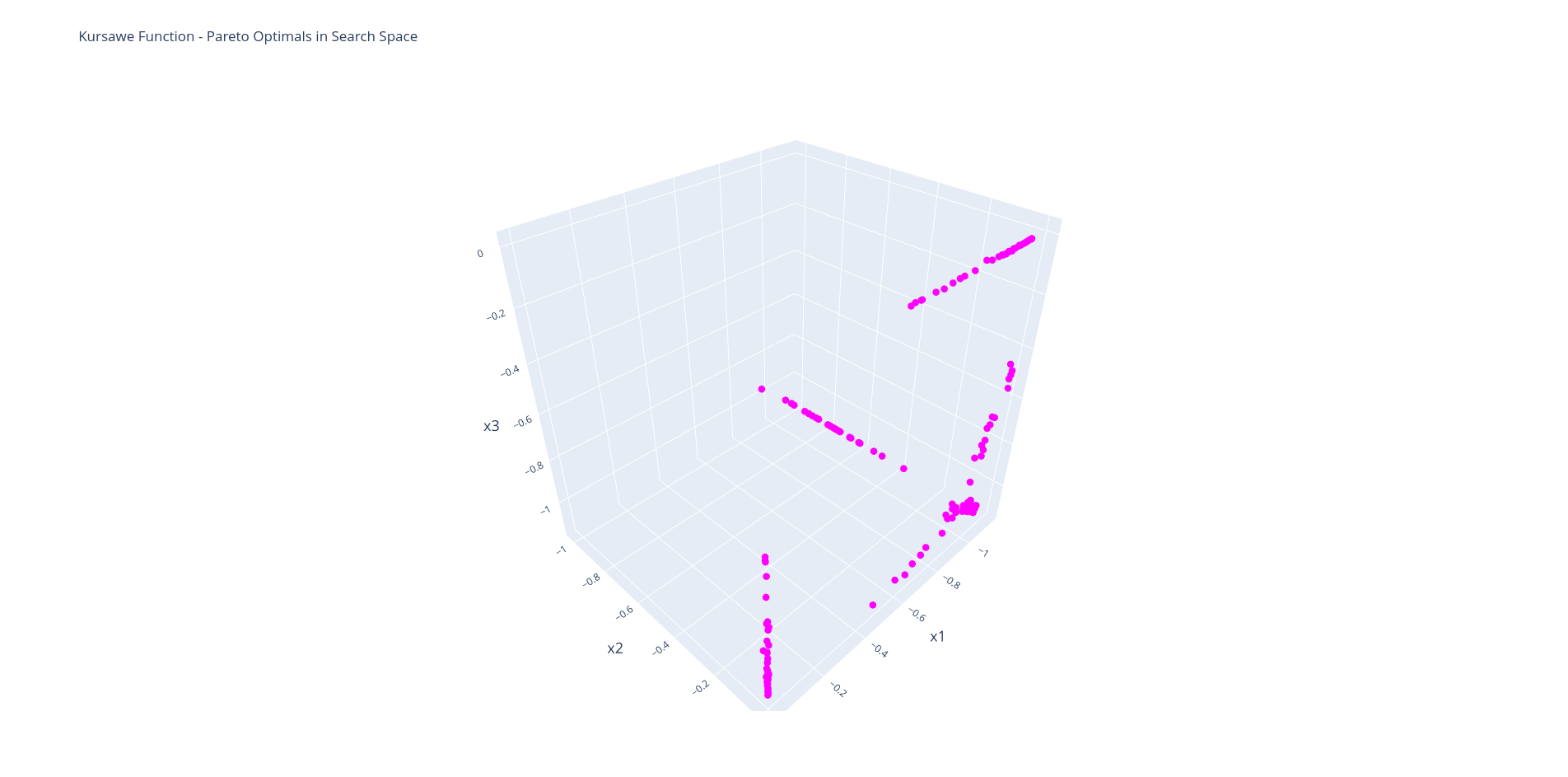}}
    \subcaptionbox{\ref{eq:seq_bcktrck_theory}-\ref{eq:fliege2000}}{\includegraphics[trim={20cm 2cm 20cm 6cm},clip,width=0.4\textwidth]{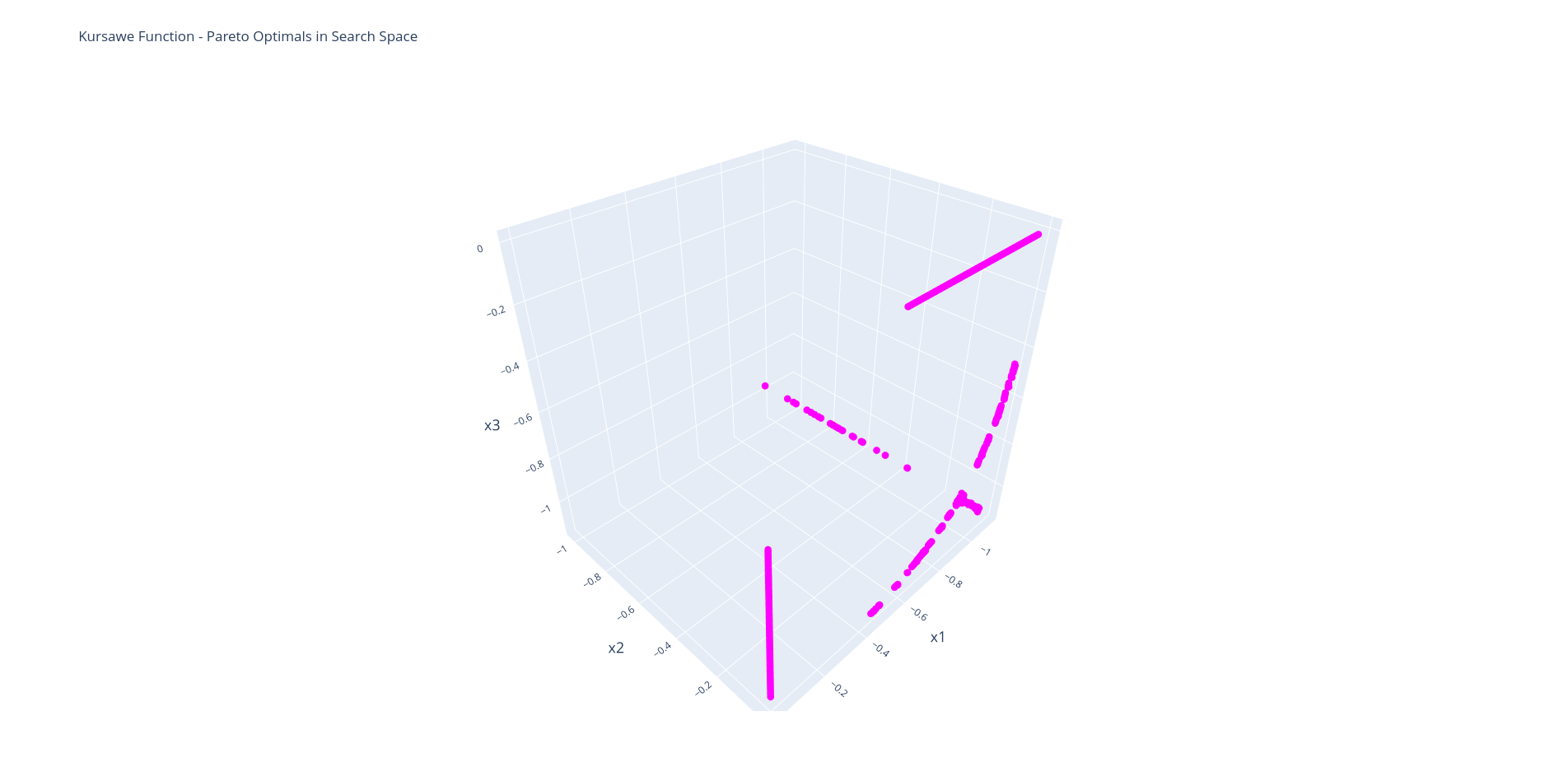}}
    \subcaptionbox{BT$_{\rm base}$-\ref{eq:my_LP_explicit}}{\includegraphics[trim={20cm 2cm 20cm 6cm},clip,width=0.4\textwidth]{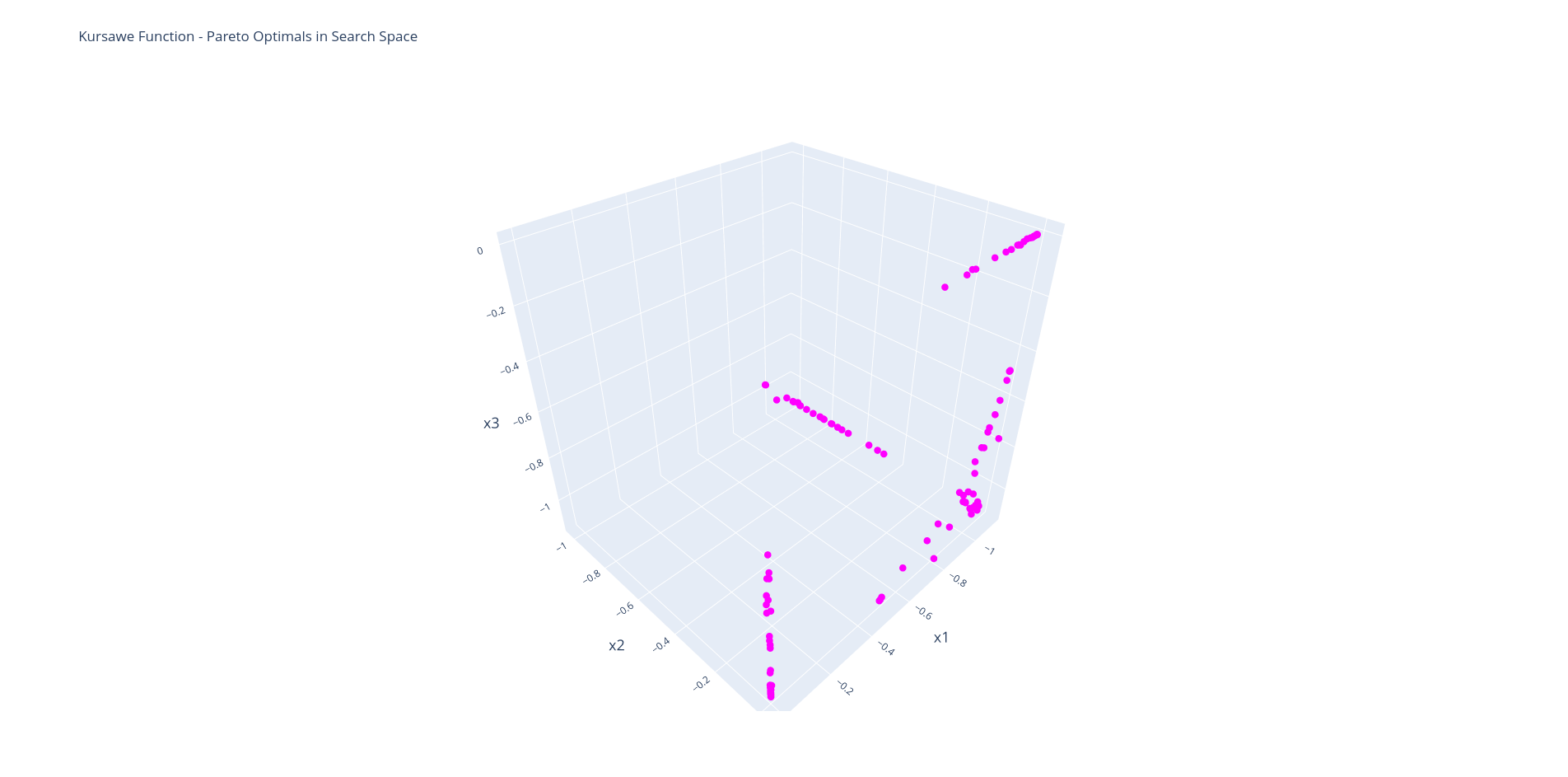}}
    \subcaptionbox{\ref{eq:seq_bcktrck_theory}-\ref{eq:my_LP_explicit}}{\includegraphics[trim={20cm 2cm 20cm 6cm},clip,width=0.4\textwidth]{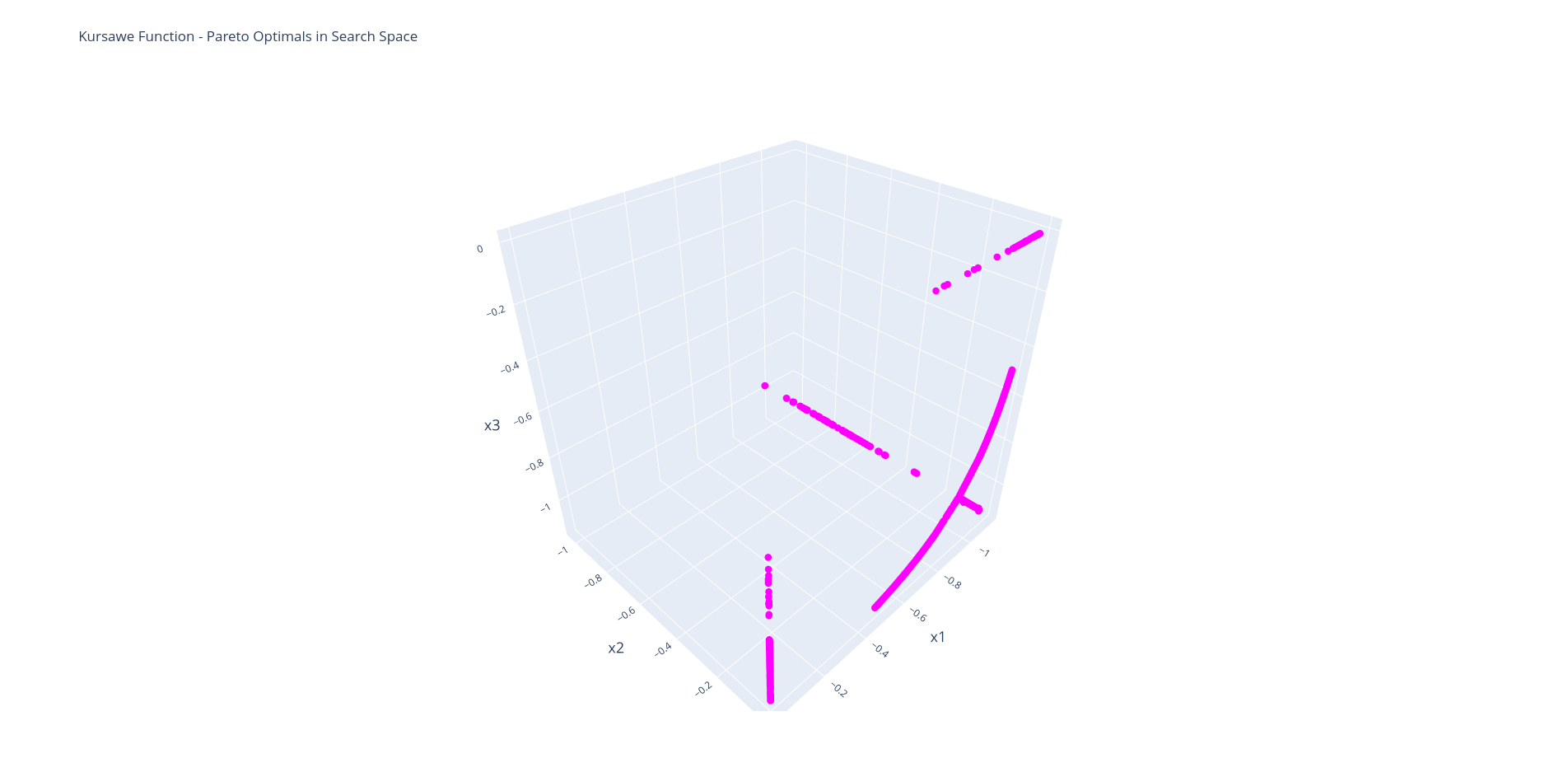}}
    \caption{Kursawe test case. Non-dominated points among all the outputs returned by the MGD algorithms; i.e., the points in $\bigcup_{j=1}^N \widetilde{C}_j$ (see \eqref{eq:nondom_j}).}
    \label{fig:K_set}
\end{figure}

\subsubsection{Viennet}

The last problem we consider is the Viennet problem. This problem is particularly difficult for MGD methods, because it is characterized by regions where the gradient $\v{g}_1$ of the first objective function is parallel and opposite to the gradient $\v{g}_3$ of the third objective function (i.e., $\bar{\v{g}}_1=-\bar{\v{g}}_3$, see \Cref{fig:V_dom_critregions}). Therefore, all these points are Pareto critical, and shared descent directions do not exist for the objectives in those points; at most, there are non-ascent directions perpendicular to $\v{g}_1$ and $\v{g_3}$ that are descent directions for the second objective function (e.g., see \Cref{fig:comparison_lemmacases}c).

\begin{figure}[htb!]
    \centering
    \includegraphics[width=0.5\textwidth]{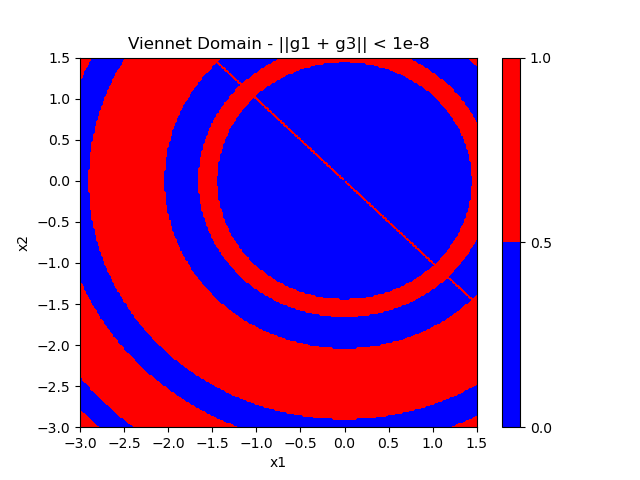}
    \caption{Viennet test case. The red color denotes the points in the square $[-3,1.5]^2$ (see \ref{sec:moo_test_probs}) where the normalized gradient of the first objective function is (almost) equal to minus the normalized gradient of the third objective function (i.e., $\norm{\bar{\v{g}}_1 + \bar{\v{g}}_3}_2 < 10^{-8}$); blue color, otherwise. In particular, red regions are regions of critical points of the same type illustrated in \Cref{fig:comparison_lemmacases}c.}
    \label{fig:V_dom_critregions}
\end{figure}

Under these circumstances, we clearly see how the pairing of \eqref{eq:seq_bcktrck_theory} with \eqref{eq:my_LP_explicit} is crucial for increasing the possibility of reaching the Pareto set/front of the problem, starting from a random point. Indeed, looking at the values of \Cref{tab:global_pareto_percentage}, we see that only the MGD method \ref{eq:seq_bcktrck_theory}-\ref{eq:my_LP_explicit} reaches a good $\mathrm{P}^N$ value of $92.80\%$; all the other MGD methods reaches a value of $\mathrm{P}^N$ that is approximately between $35\%$ and $45\%$. Nonetheless, some of the observations made for the Fonseca-Fleming and the Kursawe problems still hold. Specifically, we see that \eqref{eq:seq_bcktrck_theory} improves the performances in general, while the usage of \eqref{eq:fliege2000} or \eqref{eq:my_LP_explicit} generates directions with different characteristics that, using \Cref{alg:mgd_classic} as backtracking strategy, does not result in consistent differences in the performances. We can see the effects of these observations looking also at the approximated Pareto front/set in \Cref{fig:V_front} and \Cref{fig:V_set}, respectively.

The performance improvements obtained using \eqref{eq:seq_bcktrck_theory} depend on the fact that this backtracking strategy helps to push the sequences along and/or beyond the ``static regions'' of Pareto critical points (see \Cref{fig:V_dom_critregions}). This behavior is evident looking at the paths made of orange dots in Figures \ref{fig:V_obj}b and \ref{fig:V_obj}d (objectives' space) and Figures \ref{fig:V_dom}b and \ref{fig:V_dom}d (domain); in particular, see how the paths in Figures \ref{fig:V_dom}b and \ref{fig:V_dom}d corresponds to the red regions in \Cref{fig:V_dom_critregions}. On the other hand, since BT$_{\rm base}$ cannot help to overcome this kind of difficulty, instead of paths made of orange dots, for these regions we observe ``frozen'' red dots (i.e., sequences that stops at the beginning); see Figures \ref{fig:V_obj}a and \ref{fig:V_obj}c (objectives' space) and Figures \ref{fig:V_dom}a and \ref{fig:V_dom}c (domain). We observed a similar phenomenon in the Kursawe problem too, but less explicit.

Moreover, it is evident from \Cref{tab:global_pareto_percentage} that the best performances are obtained when we use \eqref{eq:seq_bcktrck_theory} and the sequences are built with respect to \eqref{eq:my_LP_explicit}. The reasons for these better performances depend on the different properties of the solutions returned by \eqref{eq:my_LP_explicit} and the solutions returned by \eqref{eq:fliege2000}, with respect to the Pareto critical points belonging to the red regions illustrated in \Cref{fig:V_dom_critregions}. Indeed, \eqref{eq:fliege2000} can return a null direction anytime the point of the sequence is Pareto critical (see \Cref{lem:fliege2000}), de-facto stopping the sequence itself; moreover, the infinite-norm of the computed direction is at most $1$, generating steps that are not related to the gradients' order of magnitude (in this case, too small steps). On the other hand, \eqref{eq:my_LP_explicit} returns a non-null direction for sure for all the points in the red regions illustrated in \Cref{fig:V_dom_critregions} where $\v{g}_2\neq\v{0}$, because of \Cref{lem:my_LP} - item 2.3; moreover, the upper bound of the infinite-norm of the computed direction depends on the infinite-norms of the gradients of the objective functions (see \eqref{eq:gamma_LP}), granting longer steps when objectives are steeper. From these observations, we deduce that \eqref{eq:my_LP_explicit}, together with \eqref{eq:seq_bcktrck_theory}, is better for a problem like Viennet because it guarantees the building of sequences that are more able to move along/beyond the ``static regions'', increasing the probability of reaching a good approximation of a global Pareto optimal. See how the paths generated by orange/critical Points in \Cref{fig:V_dom}d (but also \Cref{fig:V_dom}b) follow the circular shapes of the red regions of \Cref{fig:V_dom_critregions}, while in the objectives' space their movement actually results mostly in a reduction of the second objective function (see \Cref{fig:V_obj}d, but also \Cref{fig:V_obj}b).

\begin{figure}[htb!]
    \centering
    \subcaptionbox{BT$_{\rm base}$-\ref{eq:fliege2000}}{\includegraphics[trim={19cm 2cm 20cm 6cm},clip,width=0.4\textwidth]{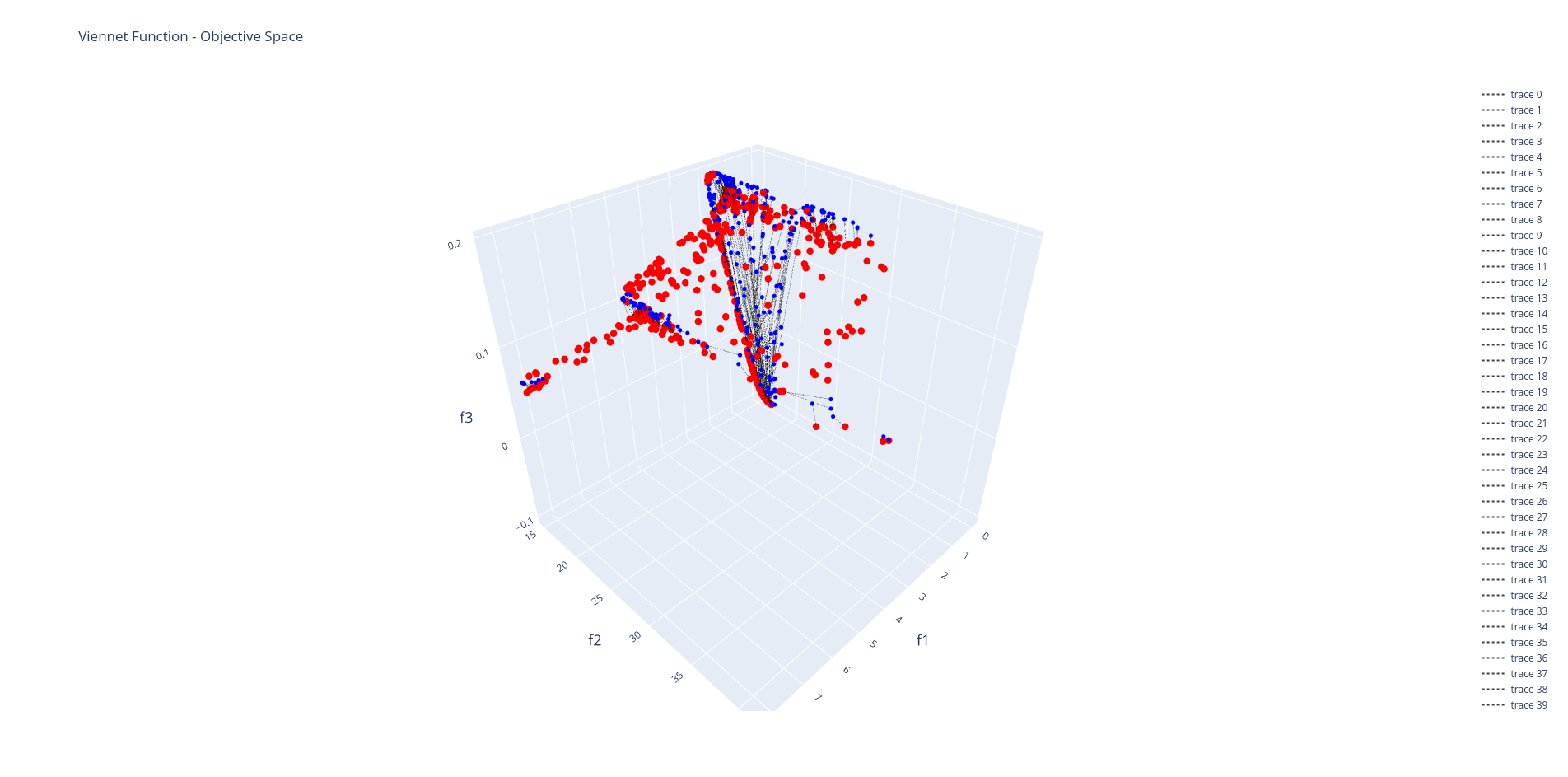}}
    \subcaptionbox{\ref{eq:seq_bcktrck_theory}-\ref{eq:fliege2000}}{\includegraphics[trim={19cm 2cm 20cm 6cm},clip,width=0.4\textwidth]{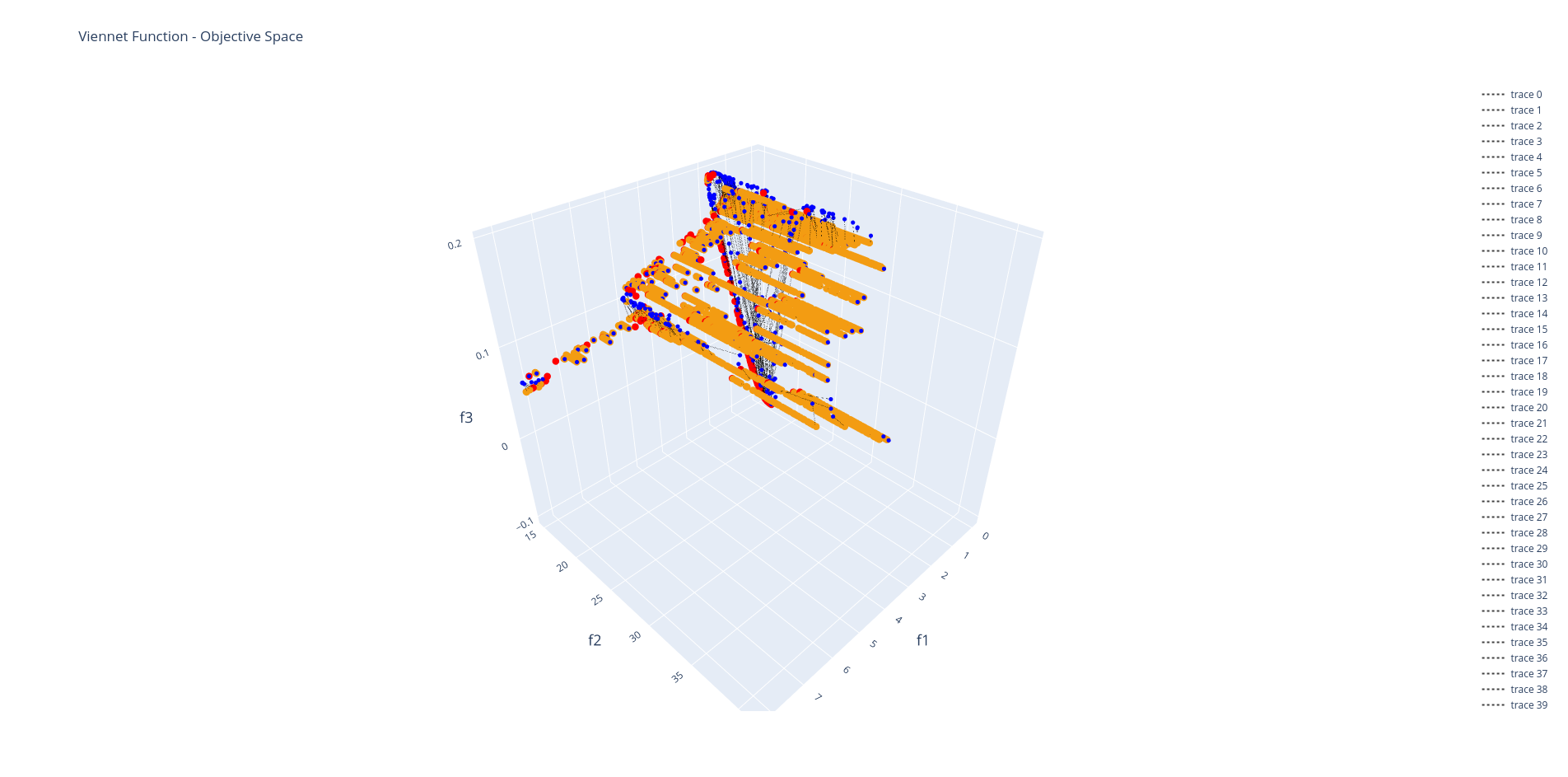}}
    \subcaptionbox{BT$_{\rm base}$-\ref{eq:my_LP_explicit}}{\includegraphics[trim={19cm 2cm 20cm 6cm},clip,width=0.4\textwidth]{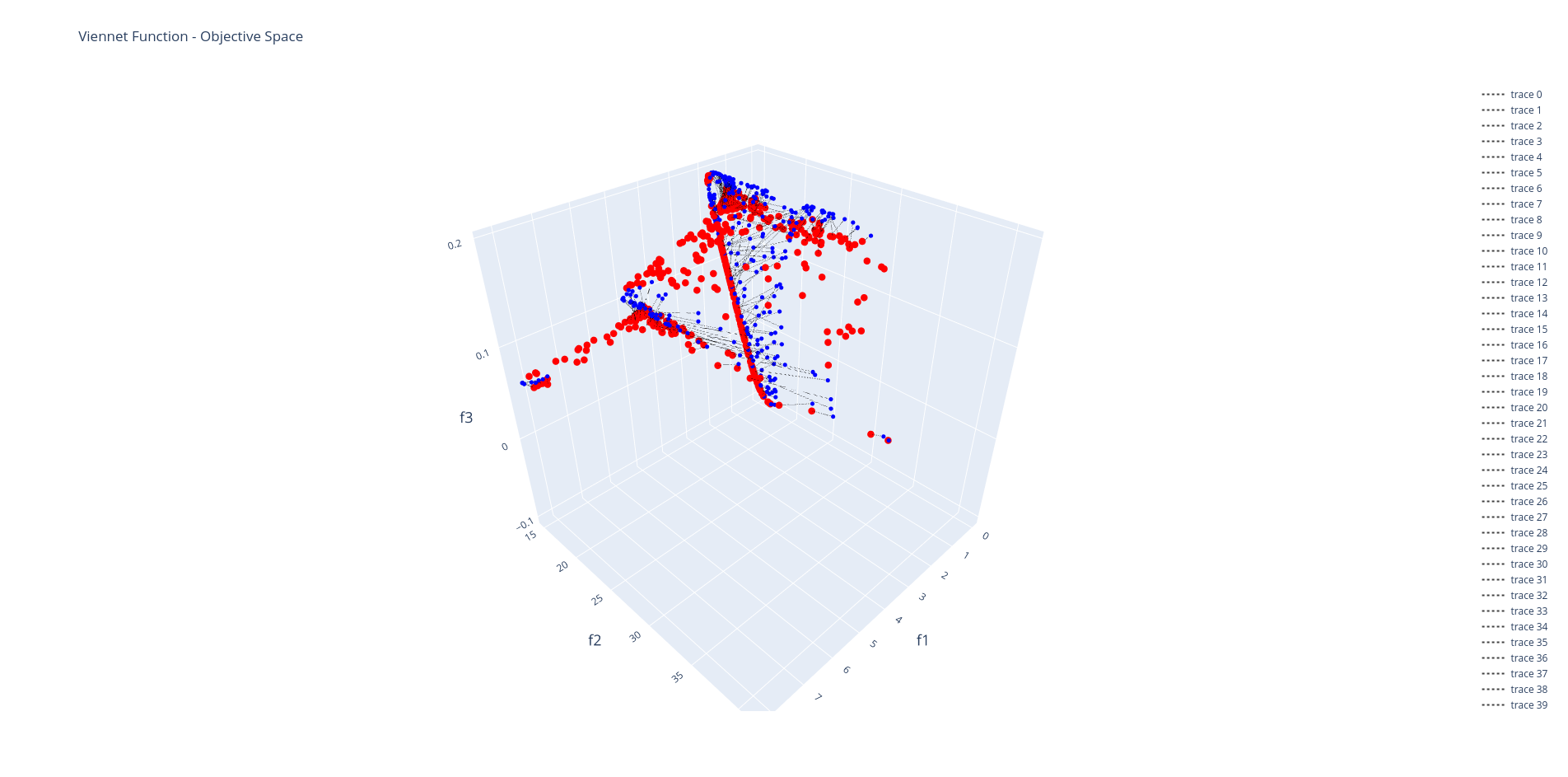}}
    \subcaptionbox{\ref{eq:seq_bcktrck_theory}-\ref{eq:my_LP_explicit}}{\includegraphics[trim={19cm 2cm 20cm 6cm},clip,width=0.4\textwidth]{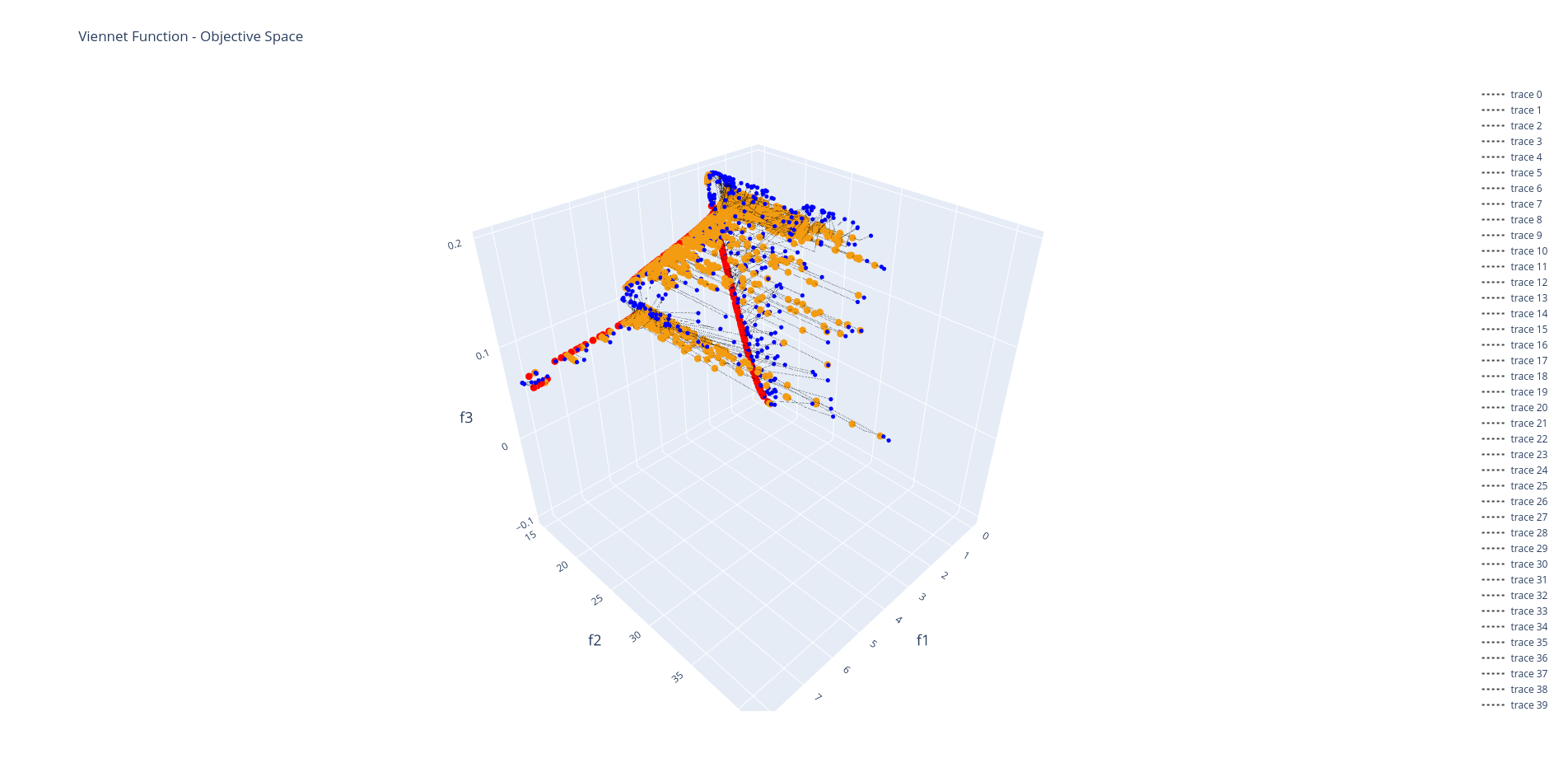}}
    \caption{Viennet test case. Movement of the $N=500$ sequences in the objectives' space. The blue dots are the starting points' images $\v{f}(\v{x}^{(0)}_j)\in\R^3$, $j=1,\ldots ,N$, while the red dots are the last points of the sequence. The orange dots are the critical points stored during \Cref{alg:my_mgd_storage}, before the final pruning (see the pseudocode).
    The black, dotted, and piece-wise linear curves describe the movement of each sequence from its starting point to its last element.}
    \label{fig:V_obj}
\end{figure}

\begin{figure}[htb!]
    \centering
    \subcaptionbox{BT$_{\rm base}$-\ref{eq:fliege2000}}{\includegraphics[trim={19cm 2cm 20cm 6cm},clip,width=0.4\textwidth]{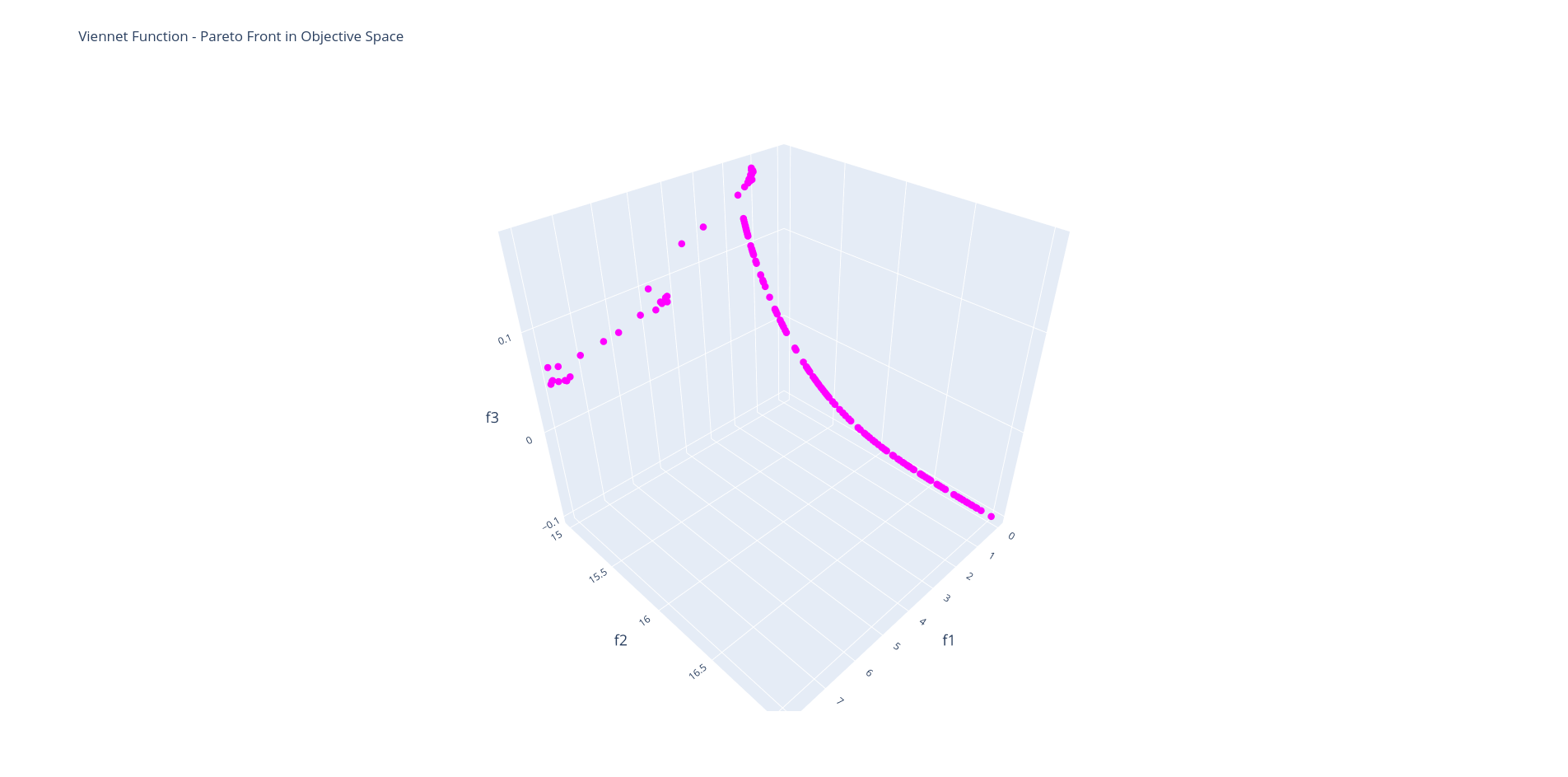}}
    \subcaptionbox{\ref{eq:seq_bcktrck_theory}-\ref{eq:fliege2000}}{\includegraphics[trim={19cm 2cm 20cm 6cm},clip,width=0.4\textwidth]{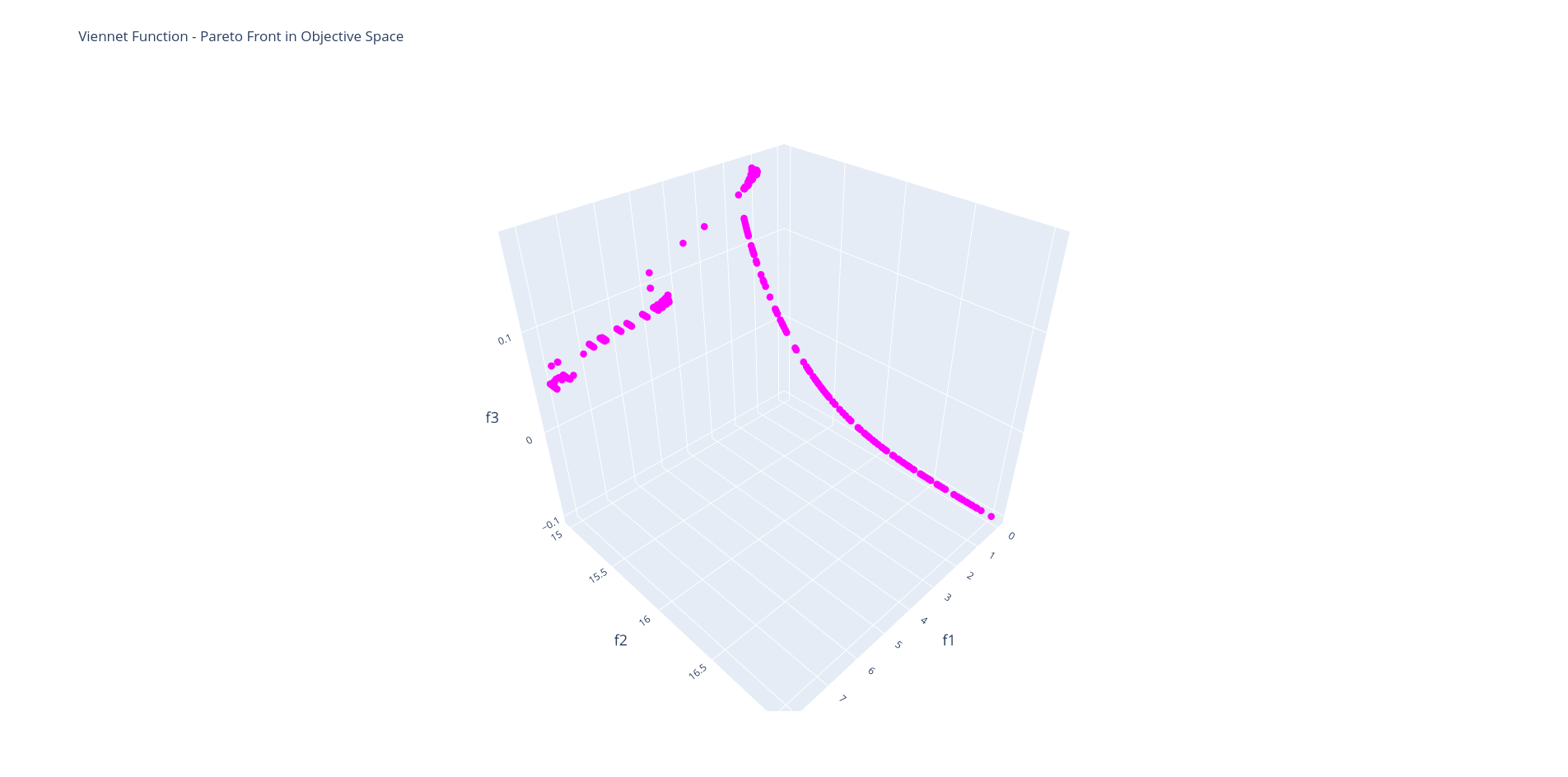}}
    \subcaptionbox{BT$_{\rm base}$-\ref{eq:my_LP_explicit}}{\includegraphics[trim={19cm 2cm 20cm 6cm},clip,width=0.4\textwidth]{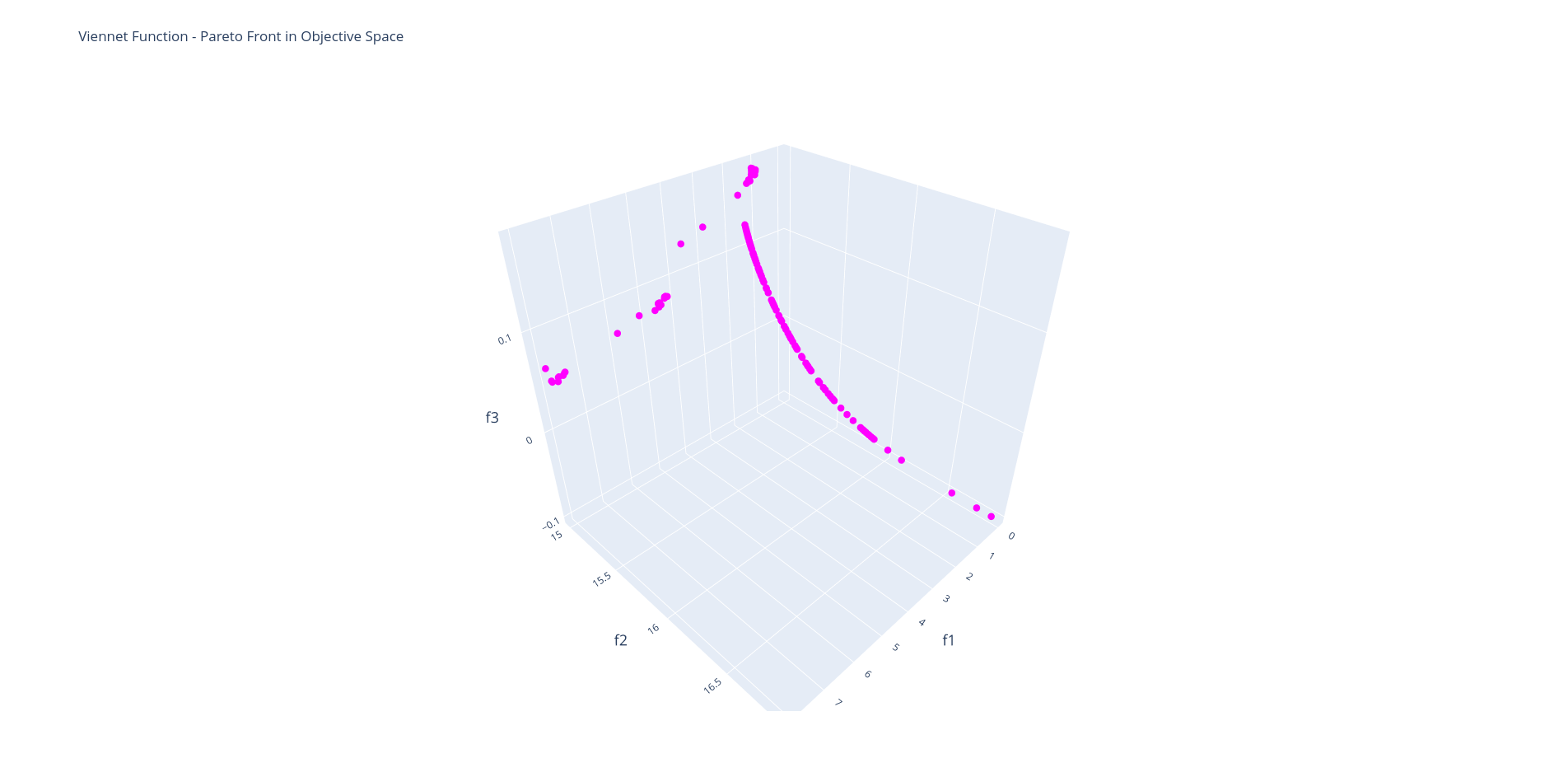}}
    \subcaptionbox{\ref{eq:seq_bcktrck_theory}-\ref{eq:my_LP_explicit}}{\includegraphics[trim={19cm 2cm 20cm 6cm},clip,width=0.4\textwidth]{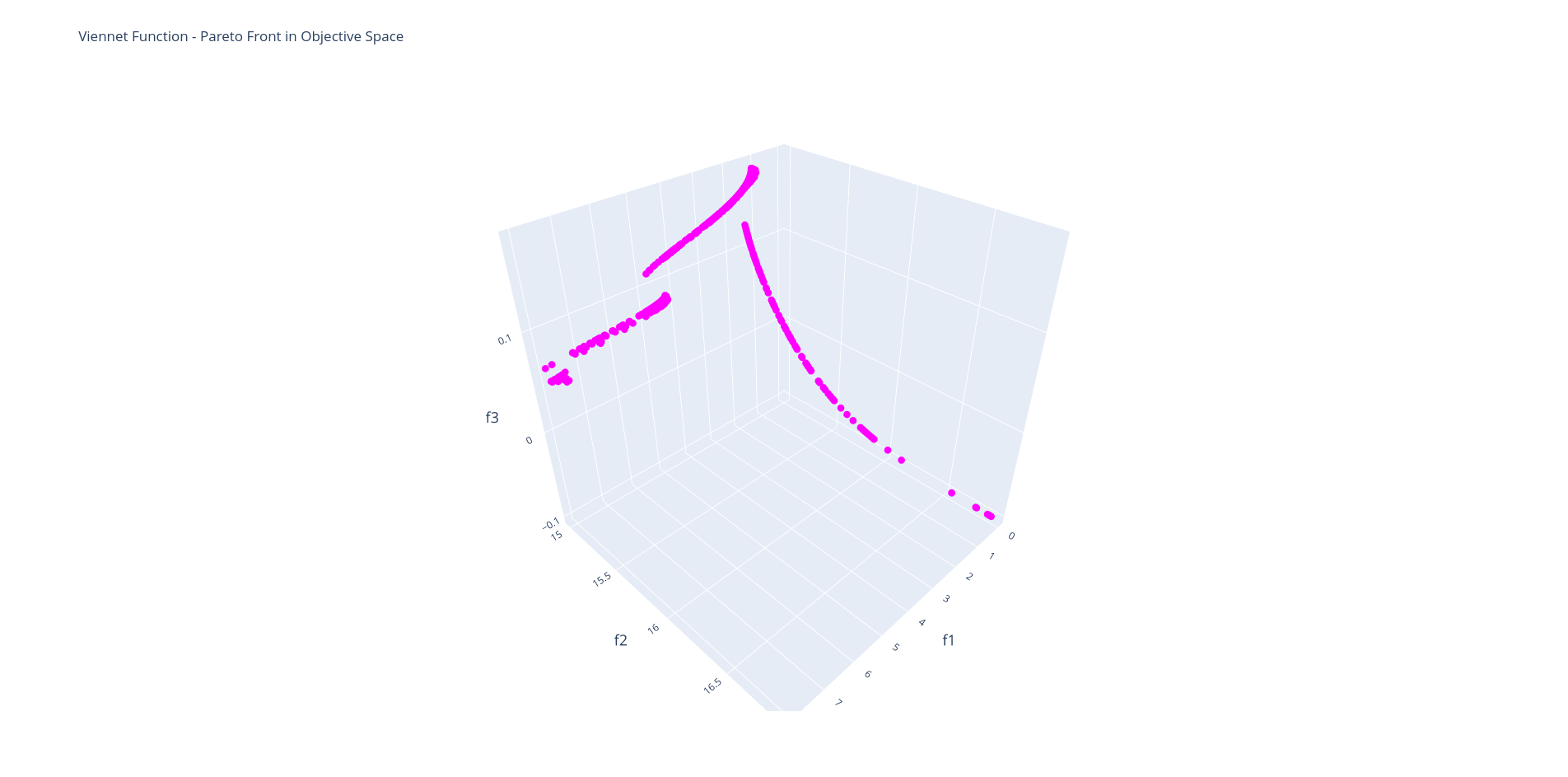}}
    \caption{Viennet test case. Images in the objectves' space of the non-dominated points among all the outputs returned by the MGD algorithms; i.e., images of the points in $\bigcup_{j=1}^N \widetilde{C}_j$ (see \eqref{eq:nondom_j}).}
    \label{fig:V_front}
\end{figure}

\begin{figure}[htb!]
    \centering
    \subcaptionbox{BT$_{\rm base}$-\ref{eq:fliege2000}}{\includegraphics[trim={0.5cm 1cm 5.5cm 3.5cm},clip,width=0.49\textwidth,height=0.165\textheight]{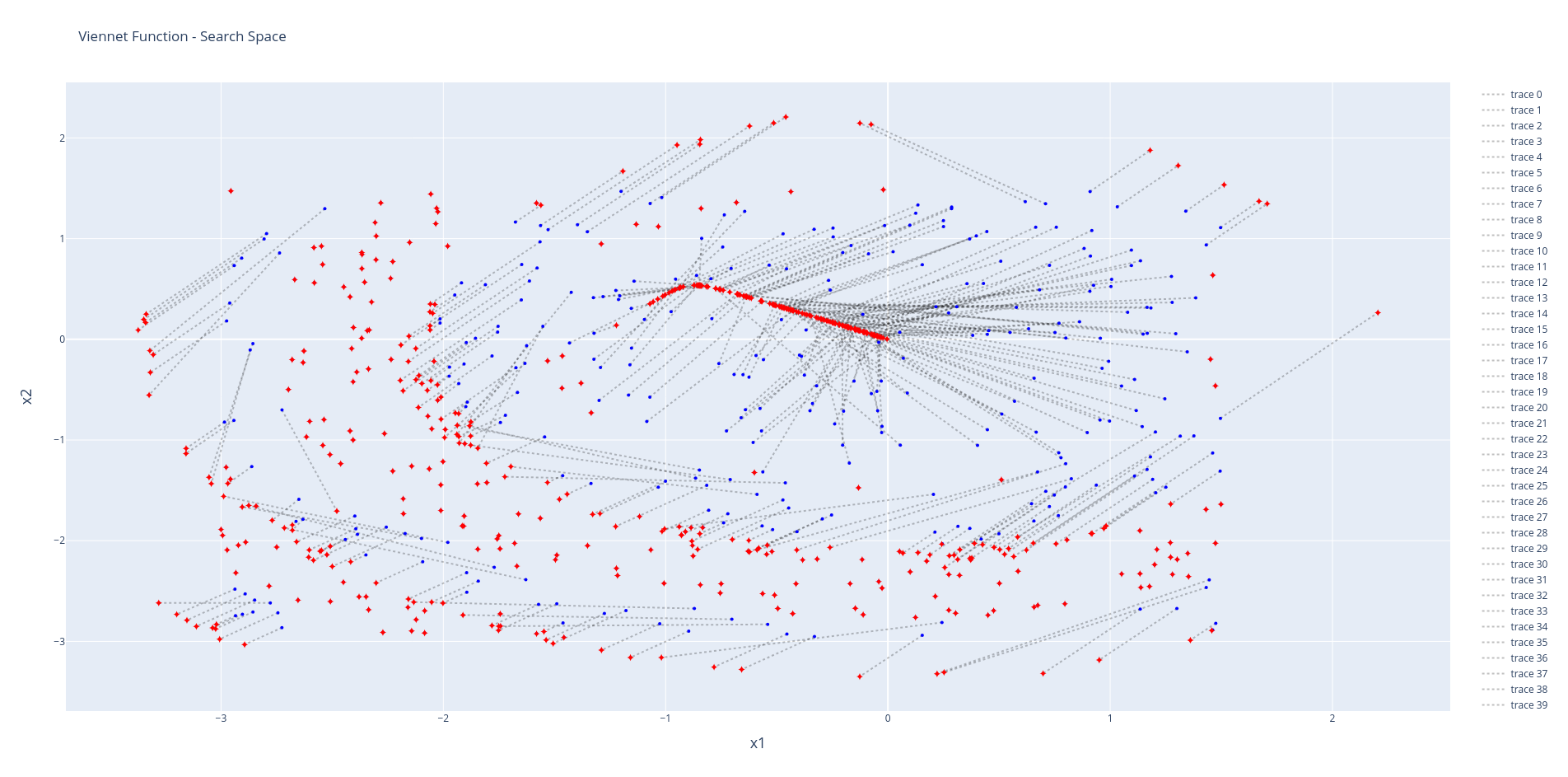}}
    \subcaptionbox{\ref{eq:seq_bcktrck_theory}-\ref{eq:fliege2000}}{\includegraphics[trim={0.5cm 1cm 5.5cm 3.5cm},clip,width=0.49\textwidth,height=0.165\textheight]{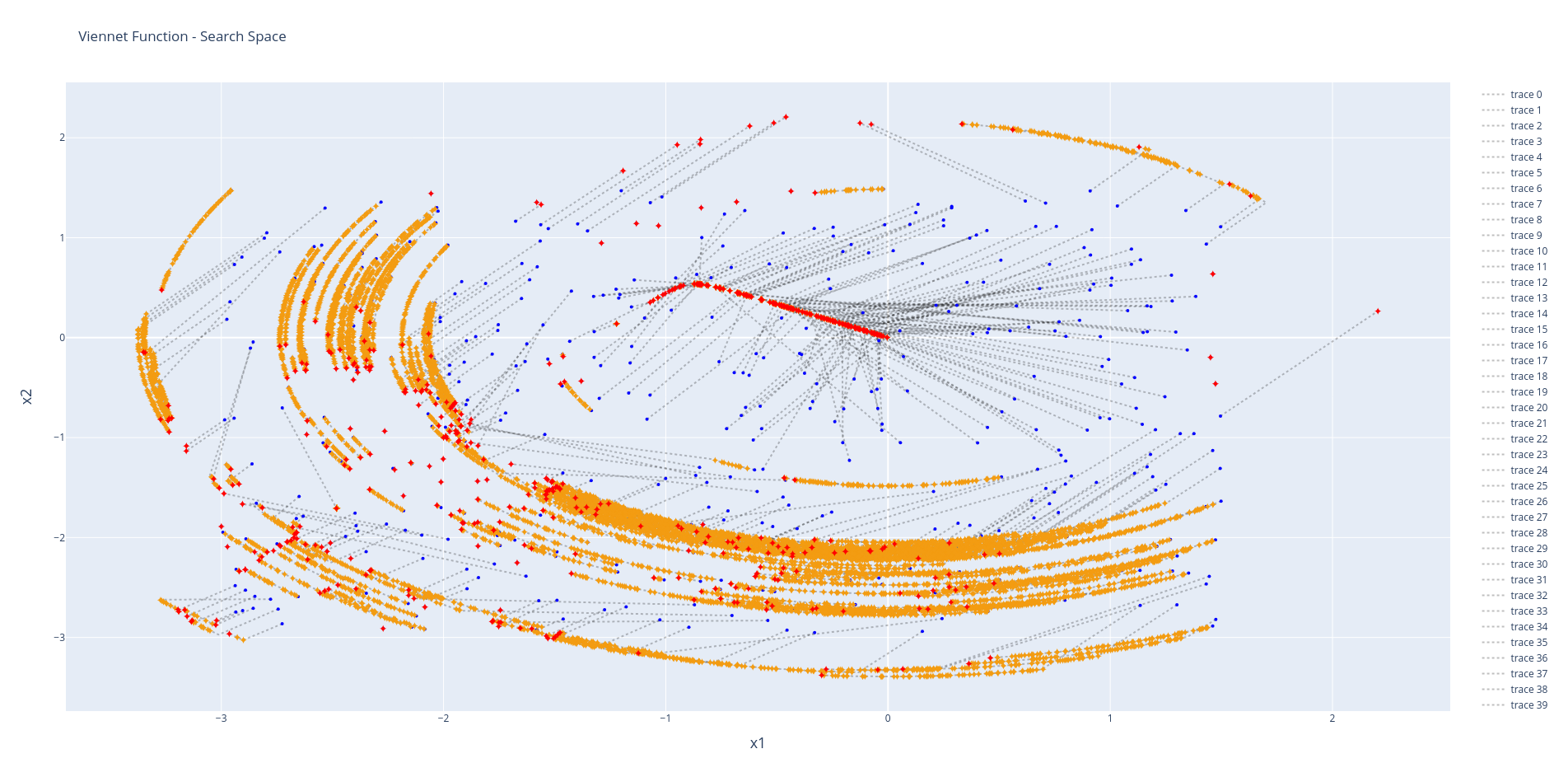}}
    \subcaptionbox{BT$_{\rm base}$-\ref{eq:my_LP_explicit}}{\includegraphics[trim={0.5cm 1cm 5.5cm 3.5cm},clip,width=0.49\textwidth,height=0.165\textheight]{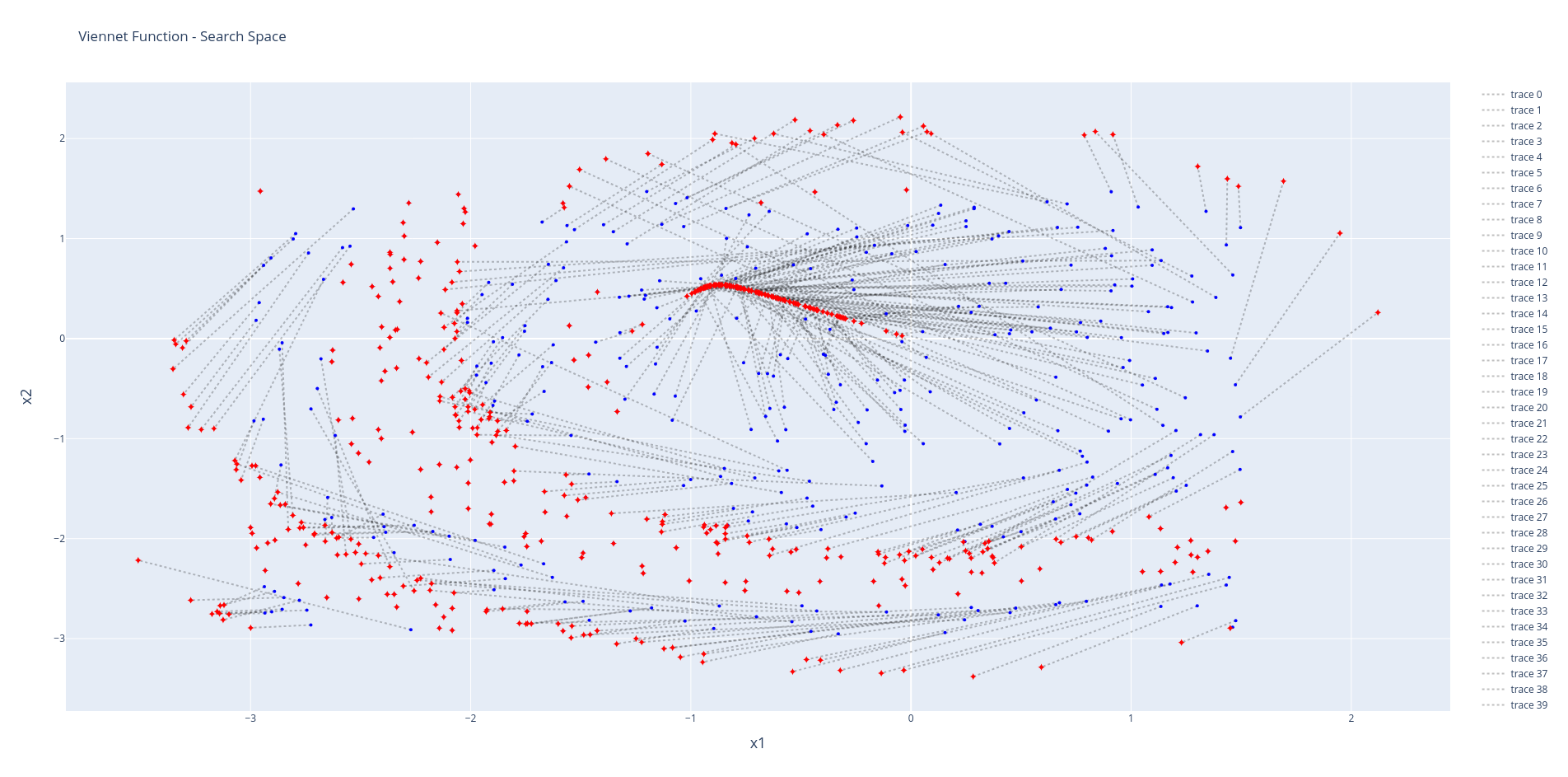}}
    \subcaptionbox{\ref{eq:seq_bcktrck_theory}-\ref{eq:my_LP_explicit}}{\includegraphics[trim={0.5cm 1cm 5.5cm 3.5cm},clip,width=0.49\textwidth,height=0.165\textheight]{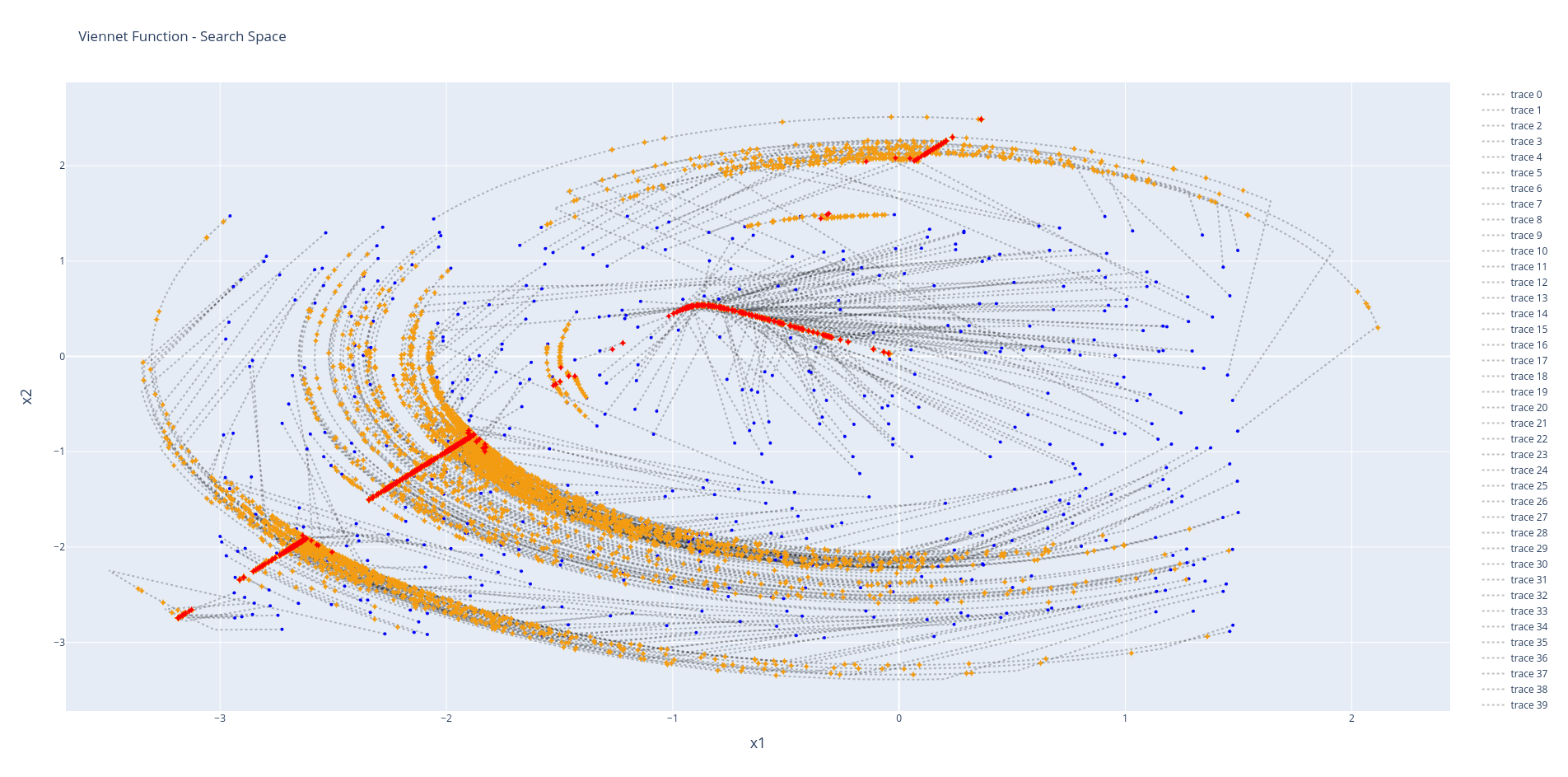}}
    \caption{Viennet test case. Movement of the $N=500$ sequences. The blue dots are the starting points $\v{x}^{(0)}_j\in\R^2$, $j=1,\ldots ,N$, while the red dots are the last points of the sequence. The orange dots are the critical points stored during \Cref{alg:my_mgd_storage}, before the final pruning (see the pseudocode).
    The black, dotted, and piece-wise linear curves describe the movement of each sequence from its starting point to its last element.}
    \label{fig:V_dom}
\end{figure}

\begin{figure}[htb!]
    \centering
    \subcaptionbox{BT$_{\rm base}$-\ref{eq:fliege2000}}{\includegraphics[trim={0.5cm 1cm 5.5cm 3.5cm},clip,width=0.49\textwidth,height=0.165\textheight]{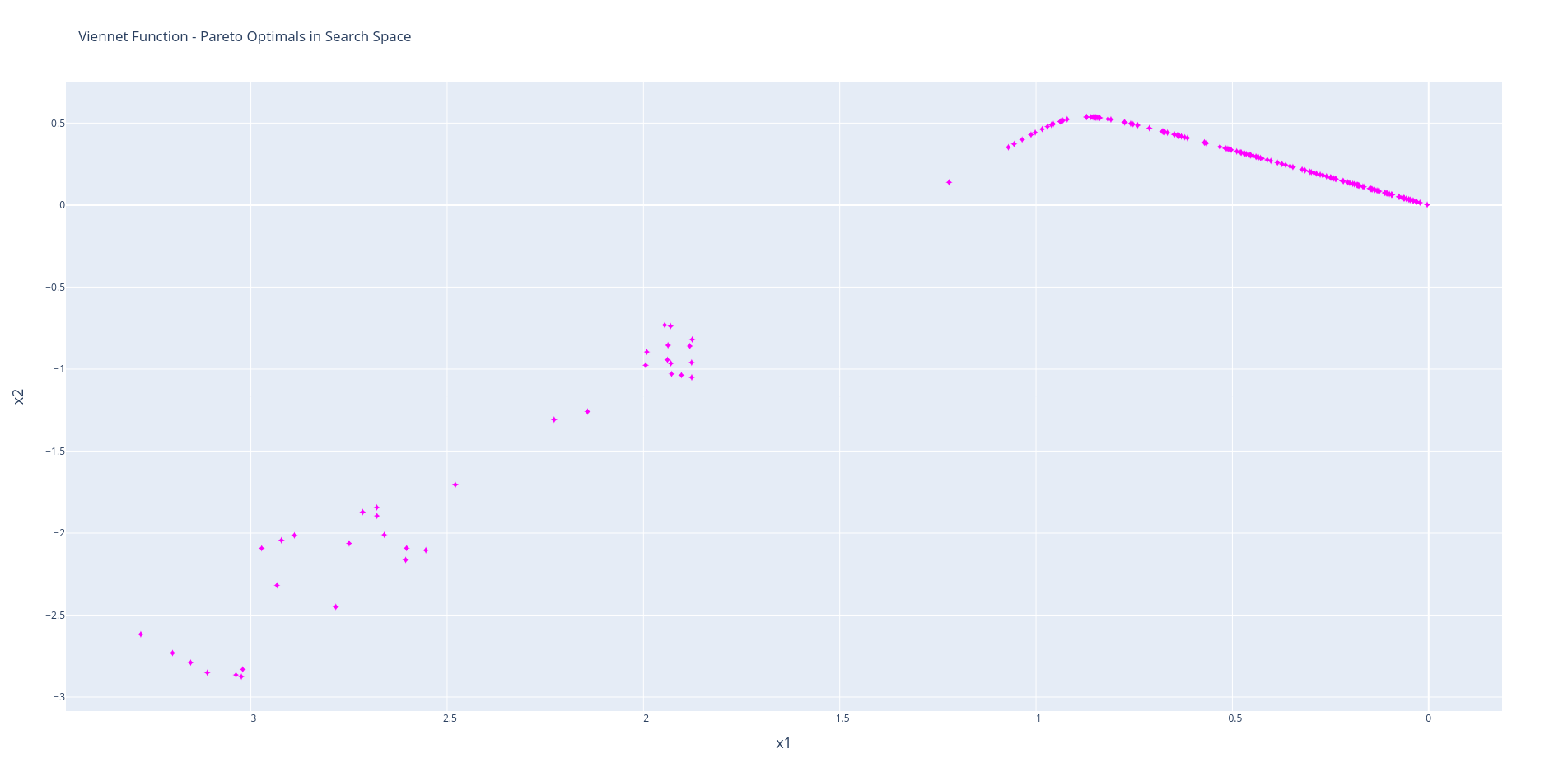}}
    \subcaptionbox{\ref{eq:seq_bcktrck_theory}-\ref{eq:fliege2000}}{\includegraphics[trim={0.5cm 1cm 5.5cm 3.5cm},clip,width=0.49\textwidth,height=0.165\textheight]{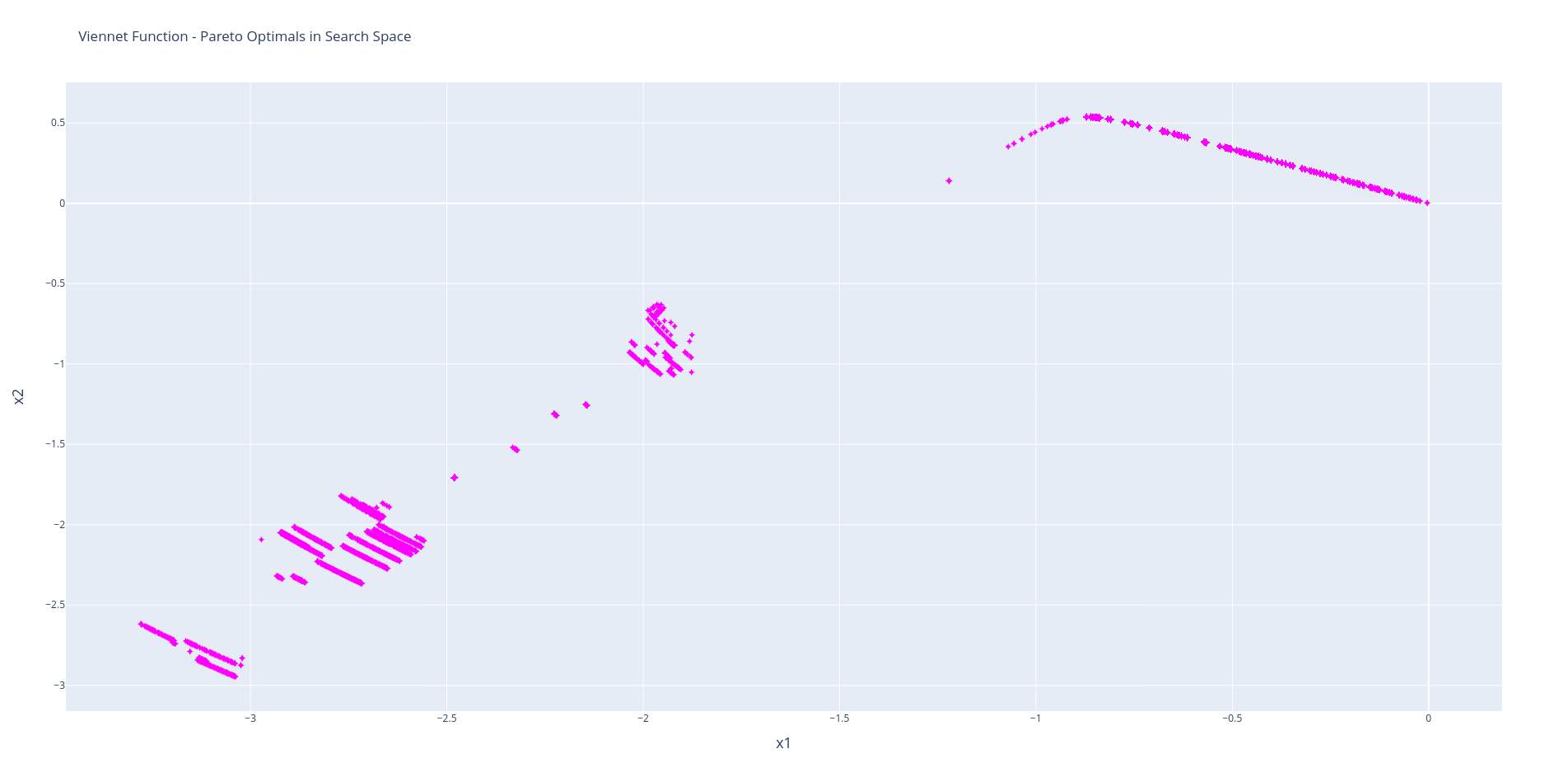}}
    \subcaptionbox{BT$_{\rm base}$-\ref{eq:my_LP_explicit}}{\includegraphics[trim={0.5cm 1cm 5.5cm 3.5cm},clip,width=0.49\textwidth,height=0.165\textheight]{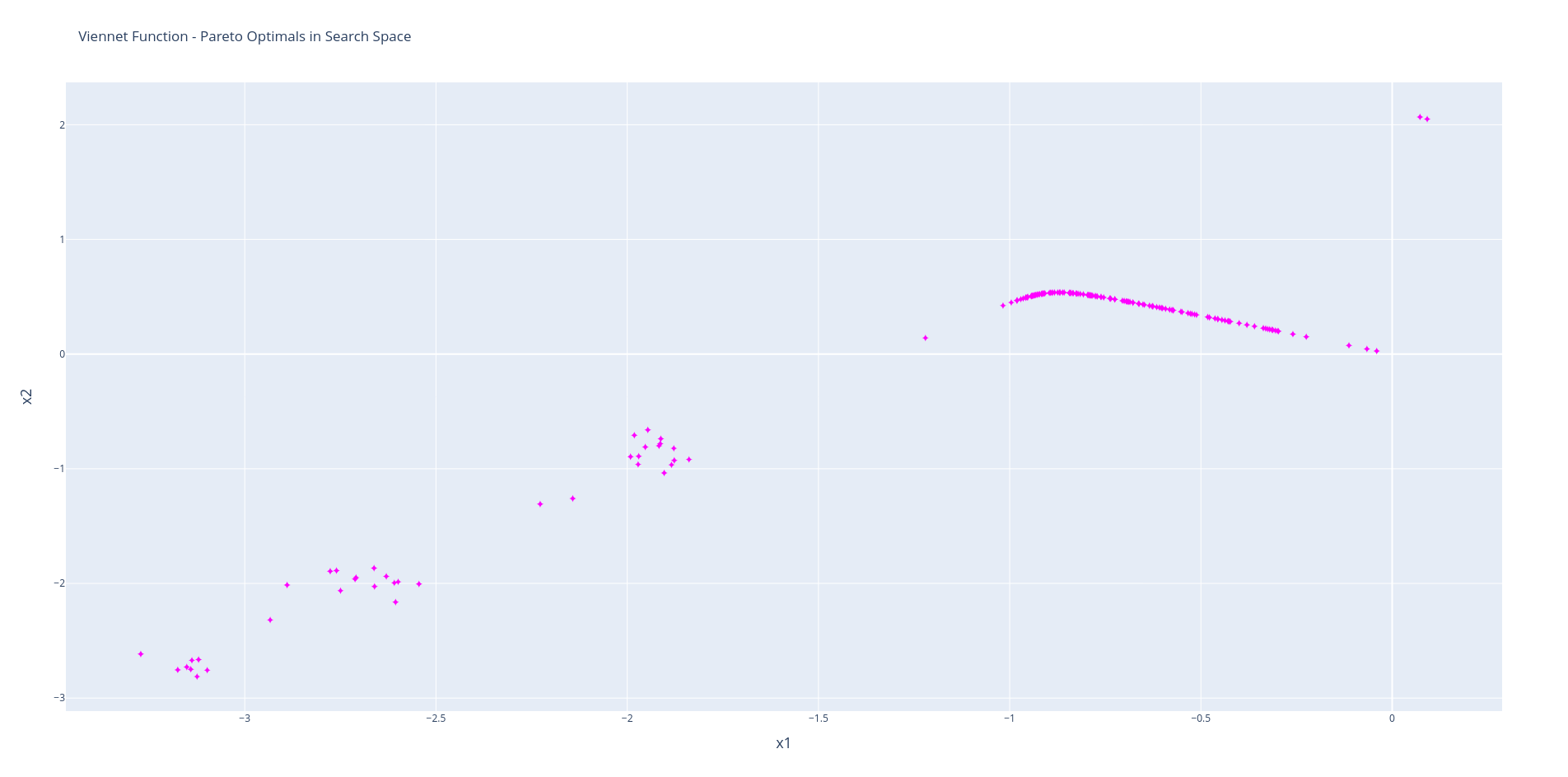}}
    \subcaptionbox{\ref{eq:seq_bcktrck_theory}-\ref{eq:my_LP_explicit}}{\includegraphics[trim={0.5cm 1cm 5.5cm 3.5cm},clip,width=0.49\textwidth,height=0.165\textheight]{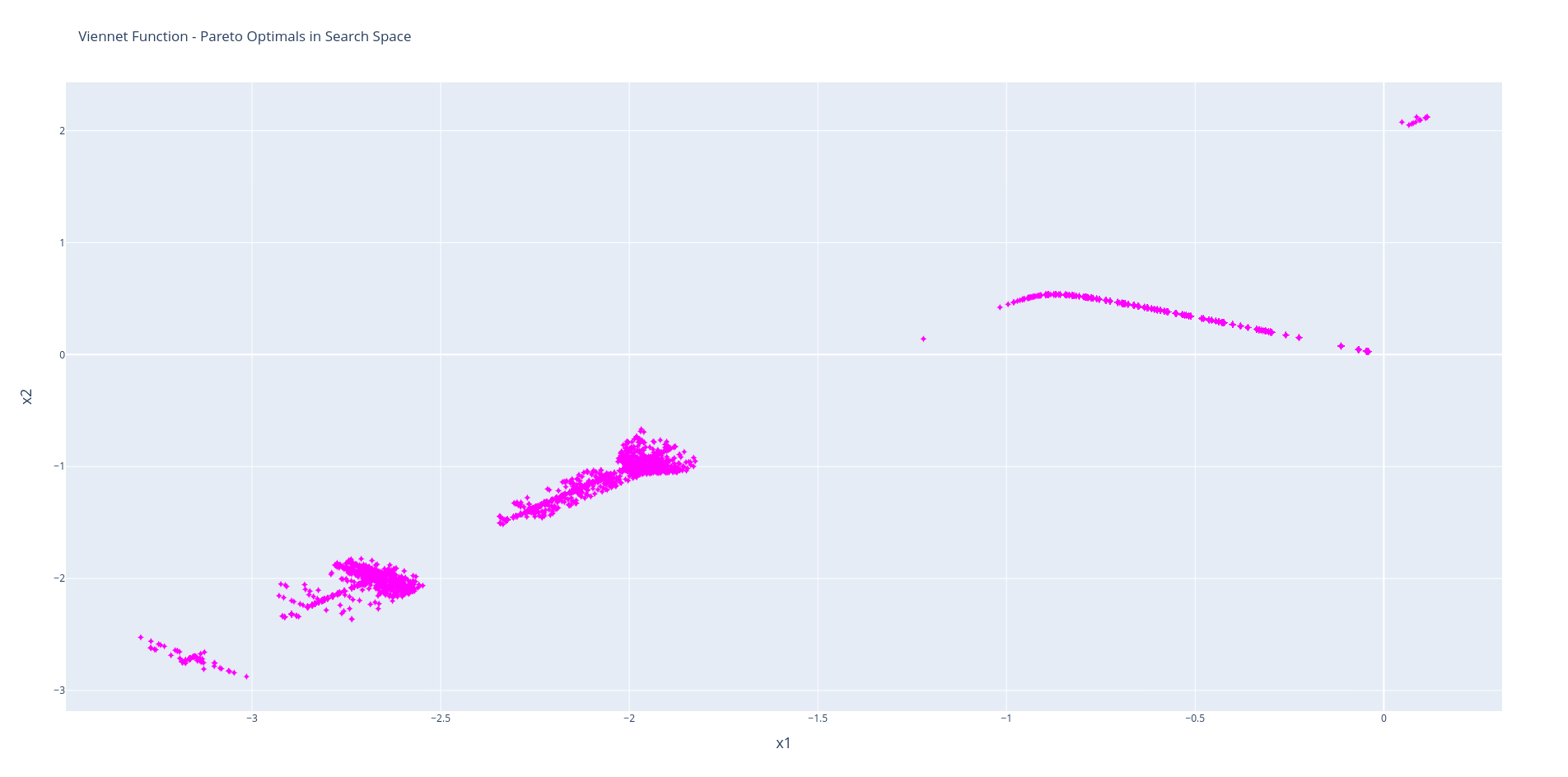}}
    \caption{Viennet test case. Non-dominated points among all the outputs returned by the MGD algorithms; i.e., the points in $\bigcup_{j=1}^N \widetilde{C}_j$ (see \eqref{eq:nondom_j}).}
    \label{fig:V_set}
\end{figure}

\subsection{Analyses' Summary}\label{sec:exp_summary}

Given the analyses of the results of the numerical experiments in this section, we summarize the properties of \eqref{eq:seq_bcktrck_theory} and \eqref{eq:my_LP_explicit} in the following.

Concerning the new backtracking strategy \eqref{eq:seq_bcktrck_theory} and its implementation \Cref{alg:my_mgd_storage}, we observe that it has only advantages and no drawbacks with respect to BT$_{\rm base}$ (see \Cref{alg:mgd_classic}). Indeed, independently of the LP problem used for computing the directions, this new backtracking strategy always improves the probability of a sequence reaching the Pareto set of the problem, reducing the risk of stopping in (or near to) Pareto critical points that are not global Pareto optimals.

Concerning the novel LP problem proposed \eqref{eq:my_LP_explicit}, due to its construction, we observe that it generates sequences of different behavior with respect to the ones generated by \eqref{eq:fliege2000}. This different behavior of the sequences does not show evident advantages if $\v{x}\in\R^n$ is not a Pareto critical or if $m=2$ and $\v{x}$ is Pareto critical; actually, sometimes its performances are even slightly smaller (see \Cref{tab:global_pareto_percentage}). Nonetheless, \eqref{eq:my_LP_explicit} demonstrate crucial importance if used together with \eqref{eq:seq_bcktrck_theory} on problems characterized by large and diffused ``static regions'' like the Viennet problem and exemplified by \Cref{fig:comparison_lemmacases}c. Indeed, in these situations, the theoretical properties of \eqref{eq:my_LP_explicit} together with the ones of \eqref{eq:seq_bcktrck_theory} guarantee the building of sequences that more probably reach a good approximation of a global Pareto optimal; on the contrary, using \eqref{eq:fliege2000} this probability is not necessarily increased too much.

Summarizing, we can say that MGD methods \ref{eq:seq_bcktrck_theory}-\ref{eq:fliege2000} and \ref{eq:seq_bcktrck_theory}-\ref{eq:my_LP_explicit} are always preferable for solving a MOO problem with respect to BT$_{\rm base}$-\ref{eq:fliege2000} and BT$_{\rm base}$-\ref{eq:my_LP_explicit}, respectively. Concerning \eqref{eq:fliege2000} and \eqref{eq:fliege2000}, there are no reasons to prefer one or another, unless there is the suspect that the MOO problem is characterized by large regions of Pareto critical points in its domain of the type described in \Cref{lem:my_LP} - item 2.3 (e.g., like Viennet problem, see \Cref{fig:V_dom_critregions}).

\section{Conclusion}\label{sec:conclusion}

In this paper, we introduced \eqref{eq:my_LP_explicit}, a novel LP problem specifically designed for computing directions in MGD methods. This new LP problem is derived from \eqref{eq:fliege2000}, introduced by Fliege and Svaiter in \cite{Fliege2000}. Through rigorous theoretical analysis (see \Cref{lem:my_LP}), we demonstrated that our proposed LP formulation surely returns non-null directions if the point $\v{x}$ of the sequence is Pareto critical but under particular conditions; namely, when at $\v{x}$ there is at least one non-ascent direction that is a descent direction for at least one objective function. This is one of the main properties that distinguishes the new LP problem from the one in \cite{Fliege2000}, which can return null directions anytime the point of the sequence we consider is a Pareto critical point.

Additionally, we developed a new backtracking strategy for MGD methods (see \eqref{eq:seq_bcktrck_theory}). This strategy is characterized by the acceptance of a new point $\v{x}^{(k+1)}=\v{x}^{(k)} + \widehat{\eta}\,\v{p}^{*\,(k)}$ at the last backtracking step if it is non-dominated by $\v{x}^{(k)}$, even if the Armijo condition is not satisfied for all objectives. Furthermore, we introduced a ``storing property'' within this strategy (see \Cref{alg:my_mgd_storage}), ensuring that all points $\v{x}^{(k)}$ which do not dominate $\v{x}^{(k+1)}$, and vice-versa, are stored. This innovation aims to improve the efficiency and robustness of the backtracking process, improving the probability of any sequence to reach the Pareto set. We provided theoretical proof of the convergence properties for a MGD method that incorporates our new backtracking strategy. 

To validate our theoretical findings, we conducted numerical experiments to evaluate the performance of the new methods compared to the baseline method taken from \cite{Fliege2000}. Our experiments revealed several advantages of using the new backtracking strategy, showing consistent improvements in the performance of the MGD methods.

While the new LP problem demonstrated good behavior, it did not present significant advantages over the previous formulation, except when paired with the new backtracking strategy in MOO problems characterized by large, static regions. This specific combination exhibited notable benefits, reinforcing the value of the new LP problem.

Future work will focus on extending our new methods to constrained MOO and on applying them to real-world problems. This will enable further validation of their practical utility and potential for broader adoption in various application domains.


\appendix

\section{Strictly Decreasing Backtracking for MGD}\label{sec:strdec_bcktrck}

In this appendix section, we report the pseudocode of the standard MGD algorithm used in \Cref{sec:experiments}; i.e., the algorithm that implements a backtracking strategy looking for a decreasing sequence for all the objectives. Actually, this algorithm is equivalent to a simplified version of \Cref{alg:my_mgd}, where the algorithm stops if the Armijo condition \eqref{eq:armijo_moo_t} is not satisfied for all the objectives, for all $t = 0,\ldots ,\Theta$.

\begin{alg}\label{alg:mgd_classic}
    Let us consider the unconstrained MOO problem \eqref{eq:MOOprob}. Then, we define the following MGD algorithm for the implementation of the classic descent method based on a backtracking strategy looking for a decreasing sequence for all the objectives.
    
    \begin{description}
    
    \item[Data:] $\v{x}^{(0)}\in\R^n$ starting point for \eqref{eq:iterative_descent}; $c_1, \alpha\in (0, 1)$ parameters for the Armijo condition; $\eta_0$ starting value for the step length; $\Theta\in\N$ maximum number of backtracking steps; $K\in\N$ maximum number of iterations; $\mathcal{P}$ sub-problem for computing $\v{p}^{*\,(k)}$ at each iteration.
    
    \item[Procedure:] \quad
        \begin{algorithmic}[1]
        \For{$k=0,\ldots ,(K-1)$}
            \State $\v{p}^{*(k)}\gets$ solution of $\mathcal{P}$, defined by $\v{x}^{(k)}$
            \State $\eta^{(k)}\gets\eta_0$
            \For{$t=0,\ldots ,(\Theta-1)$}
                \If{\eqref{eq:armijo_moo_t} is true for each $i=1,\ldots ,m$}
                    \State break
                \Else
                    \State $\eta^{(k)}\gets \eta^{(k)}\alpha$
                \EndIf
            \EndFor
            \If{$\eta^{(k)}=\eta_0 \alpha^{\Theta}$ and there is $i\in\{1,\ldots ,m\}$ s.t. \eqref{eq:armijo_moo_t} is false}
                \State break
            \Else
                \State $\v{x}^{(k+1)}\gets \v{x}^{(k)} + \eta^{(k)} \v{p}^{*(k)}$
            \EndIf
        \EndFor
        \State $\widehat{\v{x}}\gets \v{x}^{(k)}$
        \State \Return: $\widehat{\v{x}}$
        \end{algorithmic}
    \end{description}
\end{alg}

\section{Multi-Objective Test Problems}\label{sec:moo_test_probs}

In this appendix section, we report the formulations of the three MOO test problems used in the numerical experiments of \Cref{sec:experiments}.
\begin{itemize}    
    \item \textbf{Fonseca-Fleming} \cite{Fonseca1995,Fonseca1998}\textbf{.} The objective functions of the MOO problem are
    \begin{equation*}
        \begin{aligned}
        f_1 &= 1 - e^{-\sum{i=1}^n (x_i - 1/\sqrt{n})^2}\\
        f_2 &= 1 - e^{-\sum{i=1}^n (x_i + 1/\sqrt{n})^2}
    \end{aligned}\qquad.
    \end{equation*}
    The starting points for the MGD methods are sampled with random uniform distribution $\mathcal{U}([-2, 2]^n)$, $n=3$. The maximum number of steps used for the MGD methods is $K=250$.

    \item \textbf{Kursawe} \cite{Kursawe1990}\textbf{.} The objective functions of the MOO problem are
    \begin{equation*}
    \begin{aligned}
        f_1 &= \sum_{i=1}^2 -10 \, e^{-0.2 \sqrt{x_i^2+x_{i+1}^2}}\\
        f_2 &= \sum_{i=1}^3 (|x_i|^{0.8} + 5\sin(x_i^3))
    \end{aligned}\qquad.
    \end{equation*}
    The starting points for the MGD methods are sampled with random uniform distribution $\mathcal{U}([-1.5, 0.5]^n)$, $n=3$. The maximum number of steps used for the MGD methods is $K=1500$.

    \item \textbf{Viennet} \cite{Viennet1996}\textbf{.} The objective functions of the MOO problem are
    \begin{equation*}
    \begin{aligned}
        f_1 &= 0.5(x_1^2+x_2^2) + \sin(x_1^2 + x_2^2)\\
        f_2 &= \frac{(3x_1 - 2x_2 + 4)^2}{8} + \frac{(x_1 + x_2 + 1)^2}{27} + 15\\
        f_3 &= \frac{1}{x_1^2 + x_2^2 + 1} -1.1 \, e^{-(x_1^2 + x_2^2)}
    \end{aligned}\qquad.
    \end{equation*}
    The starting points for the MGD methods are sampled with random uniform distribution $\mathcal{U}([-3, 1.5]^n)$, $n=2$. The maximum number of steps used for the MGD methods is $K=7500$.
\end{itemize}



\section*{Acknowledgements}

This study was carried out within the FAIR-Future Artificial Intelligence Research and received funding from the European Union Next-GenerationEU (PIANO NAZIONALE DI RIPRESA E RESILIENZA (PNRR)–MISSIONE 4 COMPONENTE 2, INVESTIMENTO 1.3---D.D. 1555 11/10/2022, PE00000013). This manuscript reflects only the authors’ views and opinions; neither the European Union nor the European Commission can be considered responsible for them.

\subsubsection*{Code Availability:}
The code for MGD methods illustrated in this paper is available at: \url{https://github.com/Fra0013To/MGD}.


 \bibliographystyle{elsarticle-num} 
 \bibliography{References/aiPapers, References/DellaPapers, References/optimizationPapers}





\end{document}